\def\DD{\mathbb{D}}
\def\EE{\mathbb{E}}
\def\FF{\mathbb{F}}
\def\LL{\mathbb{L}}
\def\NN{\mathbb{N}}
\def\PP{\mathbb{P}}
\def\RR{\mathbb{R}}
\def\Bcal{\mathcal{B}}
\def\Ccal{\mathcal{C}}
\def\Fcal{\mathcal{F}}
\def\Ical{\mathcal{I}}
\def\Ncal{\mathcal{N}}
\def\Scal{\mathcal{S}}
\def\Tcal{\mathcal{T}}
\def\Wcal{\mathcal{W}}
\def\Zcal{\mathcal{Z}}
\newcommand{\bdF}{\mathbf{\Delta F}}
\newcommand{\bG}{\mathbf{G}}
\newcommand{\bu}{\mathbf{u}}
\newcommand{\bx}{\mathbf{x}}
\newcommand{\bsig}{\mathbf{\Sigma}}
\newcommand{\lip}{\mathrsfs{L}}
\newcommand{\eps}{\varepsilon}
\def\E#1{\mathbb{E}\left[ #1 \right]}
\def\ind#1{\mathds{1}_{\{#1\}}}
\def\1{\mathds{1}}
\newtheorem{theorem}{Theorem}[section]
\renewcommand{\thetheorem}{%
  \ifnum\c@subsection=0
    \thesection.\number\c@theorem
  \else
    \thesubsection.\number\c@theorem
  \fi}
\newtheorem{corollary}[theorem]{Corollary}
\newtheorem{lemma}[theorem]{Lemma}
\newtheorem{proposition}[theorem]{Proposition}
\newtheorem{definition}[theorem]{Definition}
\newtheorem{assumption}[theorem]{Assumption}
\newtheorem{notation}[theorem]{Notation}
\newtheorem{remark}[theorem]{Remark}
\numberwithin{equation}{section}
\renewcommand{\theequation}{%
  \ifnum\c@subsection=0
    \thesection.\number\c@equation
  \else
    \thesubsection.\number\c@equation
  \fi}
\def\z{\mathrsfs{Z}}
\DeclareSymbolFontAlphabet{\mathrsfs}{rsfs}
\DeclareMathOperator*{\esssup}{ess\,sup}
\title{Limit theorems for non linear (compound marked) Hawkes processes}
\author{Benjamin Massat\footnote{UPS, IMT UMR CNRS 5219, Universit\'e de Toulouse, 118 route de Narbonne 31062 Toulouse Cedex 9 France. \; Email: \texttt{benjamin.massat@math.univ-toulouse.fr}}  }
\begin{document}
\maketitle

\begin{abstract}
    In this article, we fill a gap in the literature on Hawkes processes. In particular, we derive a CLT for a non linear compound marked Hawkes process. We also provide an upper bound on the convergence rate using the functional $1$-Wasserstein distance. This result is obtained by discretizing the time line and reducing the problem to the quantification of the distance between finite marginal vectors, as well as between the discretized process and the original one.
\end{abstract}

\textbf{Keywords:} Functional Central Limit Theorem, Non linear Hawkes process, Poisson imbedding representation, Malliavin-Stein method, Functional $1$-Wasserstein distance\\
\textbf{Mathematics Subject classification (2020):} 60F05, 60G55, 60H07.

\section{Introduction}

Introduced by Hawkes in \cite{hawkes_spectra_1971}, the Hawkes process is a self-exciting point process whose intensity strongly depends on the past events. Whereas it was defined in a linear context, this paper will focus on non linear compound marked Hawkes process, denoted $L$. This process is defined through a sum whose last term is determined by a marked linear Hawkes process. Mathematically, we can write:
\begin{equation}
\label{eq: model_marked_compound}
        L_t := \sum^{H_t}_{i=1} g(X_i), \quad \text{with} \quad \left\{ \begin{array}{ccl}
             H_t &=& \int_{(0,t]\times \RR_+} \1_{\theta \leq \lambda_s }N(ds,d\theta) \\
            \lambda_t &=& h \left(\mu + \int_{(0,t)} b\left(X_{H_s}\right) \phi(t-s) dH_s \right)
        \end{array} \right. , \quad t\in \RR_+ .
    \end{equation}
where $h: \RR\to \RR_+$, $\mu \in \RR$, $\phi: \RR_+ \to \RR$, $b:\RR \to \RR$, $g: \RR\to \RR_+$ and $\left(X_i\right)_{i\in\NN}$ is a sequence of independent random variables with common law $\vartheta$.\\
Note that by taking $g=b\equiv 1$, we then get the classical non linear Hawkes process. Our model can be found in \cite{khabou_sur_2022} by Khabou for a particular kernel: exponential $\phi(x) = \alpha e^{-\beta x}$ and Erlang $\phi(x) = \alpha x e^{-\beta x}$. Another interesting case is the special one that we obtain by taking $h=Id$, $\phi$ positive and $\mu>0$. Indeed, this model is referred as the linear Hawkes process. Whereas the linear Hawkes process was the original one, non linearity increases modeling flexibility by allowing for saturation, inhibition, and offers more complex excitation dynamics, which cannot be captured by the linear Hawkes. For these reasons, non linear (marked) Hawkes processes have gained popularity: Brémaud and Massoulié in \cite{bremaud_stability_1996} give stability results;  Brémaud, Nappo, Torrisi in \cite{bremaud_rate_2002} find convergence rate to stationarity; Zhu in \cite{zhu_nonlinear_2013} gives the first functional central limit theorem (fCLT) for instance for linear (marked) Hawkes process and for non linear (unmarked) Hawkes process.

\subsection{Limits theorems for Hawkes processes}

In 2013, Bacry, Delattre, Hoffmann, and Muzy \cite{bacry_limit_2013} established a law of large numbers and a functional central limit theorem (fCLT) for linear multivariate Hawkes processes observed over a time interval $[0,T]$ as $T \to +\infty$:
\begin{equation}
\label{eq: fCLT}
    \left(\frac{H_{tT}- \int_0^{tT} \lambda_s ds}{\sqrt{T}}\right)_{t\in[0,1]} \xrightarrow[]{T\to +\infty} \left( \sigma B_t\right)_{t\in [0,1]},
\end{equation}
where $B_t$ is a Brownian motion. This result was later extended by Zhu \cite{zhu_nonlinear_2013} to nonlinear Hawkes processes. We can also cite Cattiaux, Colombani and Costa whose work (see \cite{cattiaux_limit_2022}) concerns nonlinear Hawkes processes with signed reproduction function (or memory kernel) thus exhibiting both self-excitation and inhibition. In the same setting, Costa, Maillard and Muraro give characterization of stability of a discrete-time Hawkes process with inhibition and memory of length two (see \cite{costa_almost_2024} for more details on the matter).

The first bound on the convergence rate of such processes was established by Torrisi \cite{torrisi_gaussian_2016} for nonlinear Hawkes processes. However, this bound does not tend to zero as $T \to +\infty$.  A major issue is the quantification of functional Central Limit Theorems; this is adresses in \cite{besancon_diffusive_2024} by Besancon, Coutin and Decreusefond using other techniques. Besides, Besançon, Coutin, Decreusefond, and Moyal, in \cite{besancon_diffusive_2024}, quantified convergences to diffuse limits of Markov processes and long-time behavior of Hawkes processes. In this paper, they study the convergence of functionals of a one-dimensional compensated Poisson measure towards a functional of a Brownian motion. For the Hawkes process, they establish the convergence of the linearly interpolated renormalized Hawkes process towards the linearly interpolated Brownian motion.
Between 2018 and 2022, a series of works by Hillairet, Huang, Khabou, Privault, and R\'eveillac (see \cite{hillairet_malliavin-stein_2022}, \cite{khabou_malliavin-stein_2021}, \cite{privault_stein_2018}) analyzed the convergence rate of the linear Hawkes process in 1-Wasserstein distance of one-dimensional marginals, obtaining the bound  
\begin{equation}
\label{eq: maj_intro}
    \Wcal_1 \left( \frac{H_T - \int_0^T \lambda_t dt}{\sqrt{T}}, \Ncal\left(0,\sigma^2 \right)\right) \leq \frac{C}{\sqrt{T}}.
\end{equation}
Their work relies on the Poisson imbedding representation coupled with a taylor-made Malliavin calculus for Hawkes processes completing the approach of \cite{torrisi_gaussian_2016} for this class of processes. Recall that Poisson imbedding consists in representing a counting process as a solution of an SDE driven by a Poisson random measure. We refer to Brémaud and Massoulié \cite{bremaud_stability_1996} for a discussion on Poisson imbedding and to Peccati, Solé, Taqqu, and Utzet \cite{peccati_steins_2010} for Malliavin calculus on the Poisson space.

This methodology is also used in \cite{coutin_normal_2024}, where Coutin, Massat, and Réveillac extend the previous results by obtaining new upper bounds in Wasserstein distance between a functional of point processes and a Gaussian distribution. Their results apply to nonlinear Hawkes processes and discrete linear Hawkes processes (see \cite{kirchner_hawkes_2016} and \cite{quayle_etude_2022} for details on these processes). They also improve the convergence theorem for nonlinear Hawkes processes from \cite{zhu_nonlinear_2013} by relaxing some assumptions.

Furthermore, Horst and Xu \cite{horst_functional_2024} established functional and scaling limit theorems for Hawkes processes under minimal conditions on their kernels using Fourier analysis. In particular, they characterized the limiting behavior of subcritical ($ \left\| \phi \right\|_1 < 1$) and critical ($\left\| \phi \right\|_1 = 1$) Hawkes processes. More recently, Xu \cite{xu_scaling_2024} also studied the scaling limit of multivariate Hawkes processes.\\

As mentioned in the begin of the article, Hawkes processes have gain in attractiveness over the last two decades. With this come variations of Hawkes process. We have already developed the case of linear Hawkes process (see \cite{bremaud_hawkes_2001} or \cite{zhu_nonlinear_2013} for instance) or Hawkes process in critical regime (see \cite{jaisson_limit_2015}, \cite{horst_functional_2024} or \cite{liu_scaling_2024} for instance). Let us focus on an other type of extension of the definition of Hawkes processes: CLTs on marked Hawkes process. Longtime behavior of these Hawkes processes have been studied by Karabash and Zhu in \cite{karabash_limit_2015} . They provide CLT for linear marked Hawkes process and prove large deviation result for their model. Later, \cite{khabou_sur_2022}, for the sake of generalization, obtained similar results without supposing that the mark are completely independent from the claim (whereas it is supposed in \cite{karabash_limit_2015}). However, the work of \cite{khabou_sur_2022} focus only on linear model with an exponential or Erlang kernel. Another work written by Khabou and Torrisi \cite{khabou_gaussian_2025} provides new explicit Wasserstein and Kolmogorov bounds for normal approximation of random variables in the first Poisson chaos. They thereby apply their result to Hawkes-type point processes with a non-Poissonian offspring distribution and so improved the different achievement that have been made by \cite{hillairet_malliavin-stein_2022} \cite{khabou_malliavin-stein_2021} and \cite{khabou_normal_2024} for instance.

\subsection{Our contibution}

The goal of this paper is to provide a functional central limit theorem for non linear marked Hawkes process in the space of right continuous with left limit functions, denoted $\DD([0,1],\RR)$, endowed with the uniform norm. Indeed, to our knowledge, their is no existing result that deal with non linearity, composition and marking.\\ 
Moreover, we will provide quantification (in a functional sens) of Equation \eqref{eq: fCLT} which will be the first convergence rate for non linear Hawkes processes by taking $b\equiv 1$. The metric used is thereupon the 1-Wasserstein distance $\Wcal_1$ defined as follows:
\begin{align*}
    \Wcal_1(X,Y) := \sup_{f\in \left(\lip_1^{\DD}, \left\|\cdot \right\|_{\LL^\infty(0,1)} \right)} \E{f(X)}-\E{f(Y)},
\end{align*}
where $\lip_1^{\DD} = \left\{ f: \DD([0,1],\RR) \to \RR \mid \left|f(x)-f(y)\right|\leq \left\|x-y \right\|_{\LL^\infty(0,1)} := \sup_{t\in [0,1]} \left| x_t-y_t\right| \right\}$. Moreover, we prove the functional convergence \eqref{eq: fCLT} without monotony assumptions on the kernel $\phi$ or the non-linear function $h$. (see Theorem \ref{thm: main}). This was possible by building upon the approaches developed in both \cite{coutin_donskers_2020} by Coutin and Decreusefond, and \cite{nourdin_multivariate_2010} by Nourdin, Peccati, and Réveillac and so we extend the marginal result presented in \cite{coutin_normal_2024}. Specifically, we combine Stein’s method with Malliavin calculus, effectively bridging these two frameworks. This approach, commonly known as the Nourdin–Peccati methodology, was first introduced in \cite{nourdin_steins_2009} and has since established itself as a fundamental tool for assessing convergence rates in limit theorems.

We proceed as follows. The Poisson imbedding and element of Malliavin's calculus are presented in Section \ref{sec: notations}. The main result, which is the first functional central limit theorem with the first functional bound regarding non linear compound marked Hawkes process, is collected in Section \ref{sec: main result}. Finally, technical lemmata are postponed in Section \ref{sec: tech_lem}.

\section{Notation and preliminaries}
\label{sec: notations}
\subsection{General notations}
We denote by $\NN$ (resp. $\NN^*$) the set of non-negative (resp. positive) integers, that is, $\NN := \{0, 1, 2, \dots\}$ (resp. $\NN^* := \{1, 2, \dots\}$). Similarly, we define the sets of non-negative and positive real numbers as $\RR_+ := [0, +\infty)$ and $\RR_+^\ast := (0, +\infty)$, respectively. 

Regarding function spaces, for any $A \in \Bcal(\RR)$ and $p \in \NN^*$, we define the Lebesgue space  
$$ \LL^p(A) = \left\{ f:A \to \RR \mid \|f\|_{\LL^p(A)} := \left(\int_A |f(t)|^p dt \right)^{1/p} < +\infty \right\}.$$  
For $p = \infty$, we set the set of a.e. bounded functions
$$ \LL^{\infty}(A) = \left\{ f:A \to \RR \mid \|f\|_{\LL^{\infty}(A)} := \esssup_{t\in A} |f(t)| < +\infty \right\}. $$  
In the specific case $A = \RR_+$, we use the shorthand notation $\LL^p := \LL^p(\RR_+)$ for any $p \in \NN^* \cup \{\infty\}$.
The function space of càdlàg function from $[0,1]$ to $\RR$ is denoted by $\DD$.
We also introduce for $p\in \NN^\ast$ the sequence spaces $\ell^p$ and $\ell^\infty$ defined by
\begin{align*}
    \ell^p &:= \left\{ \bu = (u_n)_{n\in \NN} \subset \RR^{\NN} \mid \left\| \bu\right\|_p  := \left( \sum_{n=0}^{\infty} |u_n|^p \right)^{1/p}< +\infty\right\},\\
    \ell^\infty &:= \left\{ \bu = (u_n)_{n\in \NN} \subset \RR^{\NN} \mid \left\| \bu\right\|_\infty  := \sup_{n\in \NN} \left| u_n\right|<+\infty\right\}
\end{align*}
Moreover, we make use of different sets of Lipchitz functions with different Lipchitz constants and different norms. These sets will be generalized as follow
\begin{align*}
    \left( \lip_L^E, \left\| \cdot \right\|\right) := \left\{ f: E\to \RR \mid \forall (x,y)\in E^2, \, \left| f(x)-f(y)\right| \leq L \left\| x-y\right\|  \right\}
\end{align*}

We also make use of the Hilbert–Schmidt inner product on the class of $n\times n$ matrice. We denote it by $\left\langle \cdot, \cdot\right\rangle_{H.S.}$ and it is defined between two matrices $A$ and $B$ by $\left\langle A,B\right\rangle_{H.S.} := \text{Tr}\left( A B^{t}\right)$

\subsection{Elements of stochastic analysis on the Poisson space}

In this section, we begin by recalling the basic ingredients of Malliavin's calculus that will serve as the foundation of our analysis. Our presentation follows, in spirit, the framework of \cite{khabou_sur_2022}. In particular, the Poisson random measure will be described using three variables: $t$ for the jump times, $x$ for the jump magnitudes with distribution $\vartheta$, and an auxiliary parameter $\theta$, which plays a role in the imbedding representation used later.\\
We start by defining the space of configurations
$$ \Omega:=\left\{\omega=\sum_{i=1}^{n} \delta_{(t_{i},\theta_i, x_i)} \mid t_0 < t_1 < \cdots < t_n, \; (\theta_i, x_i)_{i=1,\dots,n} \in \left(\RR_+ \times \RR \right)^n, \; n\in \NN \cup\{+\infty\} \right\}.$$
Each realization of a counting process is represented as an element $\omega$ in $\Omega$ which is a $\NN$-valued measure on $\RR \times \RR_+ \times \RR$. Let $\Fcal^N$ be the $\sigma$-field associated to the vague topology on $\Omega$, and $\PP$ be a probability measure under which the random measure $N$ defined as:
$$ N(B)(\omega):=\omega(B), \quad B \in \Bcal \left( \RR\times \RR_+ \times \RR\right),$$
is a random measure with intensity $1$ (so that $N(B)$ is a Poisson random variable with intensity $\pi\otimes \vartheta (B)$ for any $B\in \Bcal \left( \RR \times \RR_+ \times \RR\right)$ and where $\pi$ denotes the Lebesgue measure on $\RR^2$). We set $\FF^N:=(\Fcal_t^N)_{t\in \RR}$ the natural history of $N$, that is $$\Fcal_t^N:=\sigma \left(N( \Tcal  \times B), \; \Tcal \subset \Bcal((-\infty,t]), \; B \in \Bcal(\RR_+ \times \RR) \right).$$ Let also, $\Fcal_\infty^N:=\lim_{t\to+\infty} \Fcal_t^N$. The expectation with respect to $\PP$ is denoted by $\E{\cdot}$. For $t\in \RR$, we denote by $\EE_t[\cdot]$ the conditional expectation $\E{\cdot \vert \Fcal_t^N}$.\\\\
\noindent
We recall some elements of stochastic analysis on the Poisson space, especially the shift operator, the Malliavin derivative and its dual operator: the divergence. 
The following essentially come from Hillairet et al. in \cite{hillairet_malliavin-stein_2022}. Their paper provides some element of the  Malliavin calculus for the marked Hawkes process (which corresponds to the case $g\equiv 1$. Besides, note that their result do not change for our setting and that is why we decide to not provide the proofs here.
\begin{definition}[Shift operator]
\label{definition:shift}
    We define for $(t,\theta,x)$ in $\RR \times \RR_+\times \RR$ the measurable map $\eps_{(t,\theta,x)}^+ : \Omega \to \Omega $ where for any $A$ in $\Bcal(\RR \times \RR_+\times \RR)$
    $$(\eps_{(t,\theta,x)}^+(\omega))(A) := \left\lbrace \begin{array}{l} \omega(A \setminus \{(t,\theta,x)\}) +1, \quad \textrm{if } (t,\theta,x)\in A,\\\omega(A \setminus \{(t,\theta,x)\}) , \quad \textrm{else.}\end{array}\right. $$
\end{definition}

\begin{lemma}
    \label{lemma:mesur}
    Let $t \in \RR$ and $F$ be an $\Fcal_t^N$-measurable random variable. Let $v > t$, $\theta\geq 0$ and $x\in \RR$. It holds that 
    $$ F\circ\eps_{(v,\theta,x)}^+ = F, \quad \PP-a.s. $$
\end{lemma}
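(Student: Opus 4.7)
The statement is essentially a measure-theoretic consequence of the fact that adding a point at time $v>t$ cannot modify any information carried by the restriction of $N$ to $(-\infty,t]\times\RR_+\times\RR$. My plan is a standard functional monotone class argument in three short stages.

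First I would check the statement on the generating family. Let $A\in \Bcal(\RR\times\RR_+\times\RR)$ be of the form $A=\Tcal\times B$ with $\Tcal\in \Bcal((-\infty,t])$ and $B\in\Bcal(\RR_+\times\RR)$. Since $v>t$, the point $(v,\theta,x)$ does not belong to $A$, so by the second branch of Definition \ref{definition:shift},
\begin{equation*}
\bigl(\eps^+_{(v,\theta,x)}(\omega)\bigr)(A) \;=\; \omega\bigl(A\setminus\{(v,\theta,x)\}\bigr) \;=\; \omega(A),
\end{equation*}
i.e.\ $N(A)\circ \eps^+_{(v,\theta,x)} = N(A)$ pointwise on $\Omega$. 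Hence the invariance $F\circ\eps^+_{(v,\theta,x)}=F$ holds for every $F$ of the form $F=N(\Tcal\times B)$ with $\Tcal\subset(-\infty,t]$, and, by linearity and multiplication, for every finite product of such variables or, equivalently, for indicators of elements of the $\pi$-system generating $\Fcal_t^N$.

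Second, I would apply the monotone class theorem (in its functional form) to the vector space
\begin{equation*}
\Hcal \;:=\; \bigl\{F:\Omega\to\RR \text{ bounded, } \Fcal_t^N\text{-measurable} \;\bigl|\; F\circ \eps^+_{(v,\theta,x)} = F \ \PP\text{-a.s.}\bigr\}.
\end{equation*}
The set $\Hcal$ contains the constants, is stable under bounded monotone convergence (pointwise invariance is preserved under limits), and by the first step contains all bounded functions of finitely many variables $N(\Tcal_i\times B_i)$ with $\Tcal_i\subset(-\infty,t]$. Since this family generates $\Fcal_t^N$ and is stable under multiplication, the functional monotone class theorem yields that $\Hcal$ contains every bounded $\Fcal_t^N$-measurable random variable.

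Third, for a general (not necessarily bounded) $\Fcal_t^N$-measurable $F$, I would truncate by $F_n := (-n)\vee F\wedge n$, apply the previous step to each $F_n$, and let $n\to\infty$ using the pointwise identity $F_n = F_n\circ \eps^+_{(v,\theta,x)}$. The only mildly delicate point is that the exceptional $\PP$-null sets produced by the monotone class step could a priori depend on $F$, but since the identity holds pointwise (not just almost surely) at every stage of the construction above, no such dependence arises and the conclusion $F\circ \eps^+_{(v,\theta,x)} = F$ holds $\PP$-almost surely (in fact, everywhere). The main conceptual obstacle, if any, is thus purely bookkeeping: making sure that the generating $\pi$-system really produces $\Fcal_t^N$ and that the invariance lifts through monotone limits, both of which are automatic here.
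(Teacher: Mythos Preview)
The paper does not actually give a proof of this lemma: the surrounding paragraph explicitly states that these results ``essentially come from Hillairet et al.'' and that ``we decide to not provide the proofs here.'' Your functional monotone class argument is correct and is the standard route; indeed the pointwise invariance $N(\Tcal\times B)\circ\eps^+_{(v,\theta,x)}=N(\Tcal\times B)$ for $\Tcal\subset(-\infty,t]$ is precisely what the paper records (without proof) in Remark~\ref{rem:Nshift}, and the lift to all of $\Fcal_t^N$ via the multiplicative system of cylinder functionals and truncation is routine.
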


\begin{definition}[Malliavin derivative]
    For $F$ in $\LL^2(\Omega,\Fcal_\infty^N,\PP)$, we define $D F$ the Malliavin derivative of $F$ as 
    $$ D_{(t,\theta,x)} F := F\circ \eps_{(t,\theta,x)}^+ - F, \quad (t,\theta,x) \in \RR\times \RR_+ \times \RR.  $$
\end{definition}

\noindent The following definition is a by-product of \cite[Theorem 1]{picard_formules_1996} (see also \cite{nualart_anticipative_1990}).

\begin{definition}
    Let $\Ical$  be the sub-sigma field of $\Bcal(\RR \times \RR_+\times \RR)\otimes \Fcal^N_{\infty}$ of stochastic processes $Z:=(Z_{(t,\theta,x)})_{(t,\theta,x) \in \RR \times \RR_+ \times \RR}$ in $\LL^1(\Omega \times \RR \times \RR_+ \times \RR,\PP\otimes \pi\otimes \vartheta)$ such that 
    $$ D_{(t,\theta,x)} Z_{(t,\theta,x)} = 0, \quad \textrm{ for a.s. } (t,\theta,x) \in \RR \times \RR_+ \times \RR.$$
\end{definition}

\begin{remark}
\label{rem:Nshift}
    Let $(t_0,\theta_0,x_0)$ in $\RR \times \RR_+ \times \RR$, $(s,t)$ in $\RR^2$ with $t_0 < s < t$.\\
    For $\mathcal T \in \{(s,t), (s,t], [s,t), [s,t]\}$ and $B$ in $\Bcal(\RR \times \RR_+ \times \RR)$,  we have that : 
    $$ N\circ \eps_{(t_0,\theta_0,x_0)}^+ (\Tcal \times B) = N (\Tcal \times B). $$
\end{remark}

\begin{definition}
\label{def: div op}
    We set $\Scal$ the set of stochastic processes $Z:=(Z_{(t,\theta,x)})_{(t,\theta,x) \in \RR\times \RR_+ \times \RR}$ in $\Ical$ such that $Z$ is predictable according to the natural history and: 
    $$ \E{\int_{\RR\times \RR_+ \times \RR} \left|Z_{(t,\theta)}\right|^2 dt d\theta \vartheta(dx)} + \E{\left(\int_{\RR\times \RR_+ \times \RR} Z_{(t,\theta)} dt d\theta \vartheta(dx) \right)^2}<+\infty .$$
    For $Z$ in $\Scal$, we set the divergence operator with respect to $N$ as  
    \begin{equation}
    \label{eq:delta}
        \delta(Z):=\int_{\RR\times \RR_+ \times \RR} Z_{(t,\theta,x)} N(dt,d\theta,dx) - \int_{\RR\times \RR_+} Z_{(t,\theta)} dt d\theta \vartheta(dx).
    \end{equation}
\end{definition}

\noindent We conclude this section with the integration by parts formula on the Poisson space (see \cite[Remark 1]{picard_formules_1996}) and the Heisenberg equality.

\begin{proposition}[See \textit{e.g.} \cite{picard_formules_1996}]
\label{prop:IPP}
    Let $F$ be in $\LL^2(\Omega,\Fcal_\infty^N,\PP)$ and $Z=(Z_{(t,\theta,x)})_{(t,\theta,x) \in \RR\times \RR_+ \times \RR}$ be in $\Scal$. We have that 
    \begin{equation}
    \label{eq:IBPPoisson}
        \E{F \delta(Z)} = \E{\int_{\RR\times \RR_+ \times \RR} Z_{(t,\theta)} D_{(t,\theta,x)} F dt d\theta \vartheta(dx)}.
    \end{equation}
\end{proposition}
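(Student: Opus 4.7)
The plan is to reduce identity \eqref{eq:IBPPoisson} to the classical Mecke (or Campbell--Mecke) formula for the Poisson random measure $N$. This formula asserts that for any suitably integrable measurable map $g:\RR\times\RR_+\times\RR\times\Omega\to \RR$,
\begin{equation*}
    \E{\int g(t,\theta,x,\omega) \, N(dt,d\theta,dx)} = \E{\int g\bigl(t,\theta,x, \eps^+_{(t,\theta,x)}(\omega)\bigr) \, dt\, d\theta\, \vartheta(dx)},
\end{equation*}
the natural manifestation of the fact that adding one extra atom to the Poisson cloud corresponds, under expectation, to integrating against the intensity measure $dt\,d\theta\,\vartheta(dx)$. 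Once this tool is available the remainder of the argument is essentially bookkeeping together with the defining property of the class $\Ical$.

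Concretely, I would apply Mecke's formula to $g(t,\theta,x,\omega):=Z_{(t,\theta,x)}(\omega)\,F(\omega)$ to obtain
\begin{equation*}
    \E{\int Z_{(t,\theta,x)} F \, N(dt,d\theta,dx)} = \E{\int \bigl(Z_{(t,\theta,x)}\circ \eps^+_{(t,\theta,x)}\bigr) \bigl(F\circ \eps^+_{(t,\theta,x)}\bigr) \, dt \, d\theta\, \vartheta(dx)}.
\end{equation*}
The assumption $Z\in\Ical$ gives $Z_{(t,\theta,x)}\circ \eps^+_{(t,\theta,x)} = Z_{(t,\theta,x)}$ $\PP$-almost surely, while by definition of the Malliavin derivative $F\circ\eps^+_{(t,\theta,x)} = F + D_{(t,\theta,x)}F$. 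Substituting and rearranging produces
\begin{equation*}
    \E{\int Z_{(t,\theta,x)} F \, N(dt,d\theta,dx)} - \E{\int Z_{(t,\theta,x)} F \, dt\, d\theta\, \vartheta(dx)} = \E{\int Z_{(t,\theta,x)} D_{(t,\theta,x)} F \, dt\, d\theta\, \vartheta(dx)}.
\end{equation*}
Since $F$ does not depend on $(t,\theta,x)$, it factors out of both integrals on the left-hand side, which is then precisely $\E{F\,\delta(Z)}$ by the definition \eqref{eq:delta} of the divergence operator.

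The main technical obstacle lies not in the algebraic identity itself but in justifying both Mecke's formula and the Fubini interchanges under the hypotheses of the statement. The two summability conditions built into $\Scal$, combined with the $\LL^2$-isometry $\E{\delta(Z)^2}=\E{\int |Z_{(t,\theta,x)}|^2 dt\,d\theta\,\vartheta(dx)}$ for predictable integrands against the compensated measure, yield via Cauchy--Schwarz the finiteness of $\E{|F\,\delta(Z)|}$ and of the right-hand side. The clean route is therefore to prove the identity first for bounded cylindrical $F$ and for simple predictable integrands of the form $Z_{(t,\theta,x)} = Y\,\1_{(a,b]}(t)\,\1_B(\theta,x)$ with $Y$ bounded and $\Fcal_a^N$-measurable (for which Lemma~\ref{lemma:mesur} makes $Z\circ\eps^+ = Z$ automatic), and then to extend to $(F,Z)\in \LL^2\times \Scal$ by a standard density argument.
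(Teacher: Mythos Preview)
The paper does not actually prove this proposition; it is stated with a reference to \cite{picard_formules_1996} and no proof is supplied in the text. Your argument via the Mecke formula is the standard derivation and is correct: the algebraic core --- using $Z\in\Ical$ to replace $Z_{(t,\theta,x)}\circ\eps^+_{(t,\theta,x)}$ by $Z_{(t,\theta,x)}$, writing $F\circ\eps^+ = F + D F$, and recognising the left-hand side as $\E{F\,\delta(Z)}$ --- is exactly the computation behind the cited result, and your plan to secure integrability by first working with bounded cylindrical $F$ and simple predictable $Z$ before passing to the limit is the canonical route.
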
 

\begin{proposition}[Heisenberg commutation property]
\label{prop: heisenberg}
    For $Z\in \Scal$,
    $$D_{(t,\theta,x)}\left(\delta \left( Z\right) \right) = Z_{(t,\theta,x)}+\delta \left( D_{(t,\theta,x)}\left( Z\right)\right), \quad (t,\theta,x)\in \RR\times \RR_+ \times \RR.$$
\end{proposition}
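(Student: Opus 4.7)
The plan is to compute $D_{(t,\theta,x)}\delta(Z)=\delta(Z)\circ \eps_{(t,\theta,x)}^+ -\delta(Z)$ directly from the definition \eqref{eq:delta}, by pushing the add-point shift through both the stochastic integral and its compensator, and then identifying the leftover terms as $Z_{(t,\theta,x)}$ plus the divergence of $D_{(t,\theta,x)}Z$.

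First, I would exploit the elementary identity $N\circ \eps_{(t,\theta,x)}^+=N+\delta_{(t,\theta,x)}$, which holds $\PP$-a.s.\ because a fixed point of $\RR\times\RR_+\times\RR$ a.s.\ lies outside any Poisson configuration (this is implicit in Definition \ref{definition:shift} and Remark \ref{rem:Nshift}). Combined with $Z_{(s,\eta,y)}\circ \eps_{(t,\theta,x)}^+=Z_{(s,\eta,y)}+D_{(t,\theta,x)}Z_{(s,\eta,y)}$, the stochastic-integral part becomes
\begin{align*}
\int Z_{(s,\eta,y)}\circ \eps_{(t,\theta,x)}^+\,N\circ\eps_{(t,\theta,x)}^+(ds,d\eta,dy)
&=\int Z_{(s,\eta,y)}\,N(ds,d\eta,dy)\\
&\quad +\int D_{(t,\theta,x)}Z_{(s,\eta,y)}\,N(ds,d\eta,dy)+Z_{(t,\theta,x)}\circ \eps_{(t,\theta,x)}^+.
\end{align*}
At this point the defining property of $\Ical$, namely $D_{(t,\theta,x)}Z_{(t,\theta,x)}=0$ for a.e.\ $(t,\theta,x)$, collapses the last term into $Z_{(t,\theta,x)}$. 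For the compensator the shift acts only on the integrand and produces $\int Z_{(s,\eta,y)}\,ds\,d\eta\,\vartheta(dy)+\int D_{(t,\theta,x)}Z_{(s,\eta,y)}\,ds\,d\eta\,\vartheta(dy)$. Subtracting the compensator from the stochastic integral and then subtracting $\delta(Z)$ gives $D_{(t,\theta,x)}\delta(Z)=Z_{(t,\theta,x)}+\delta\bigl(D_{(t,\theta,x)}Z\bigr)$, which is the desired identity.

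The main technical obstacle is to ensure that the right-hand side is well posed, i.e.\ that $D_{(t,\theta,x)}Z$ still belongs to $\Scal$ for a.e.\ $(t,\theta,x)$, so that $\delta(D_{(t,\theta,x)}Z)$ is defined in the sense of Definition \ref{def: div op}. Predictability is preserved (the shift at $(t,\theta,x)$ does not alter what happens strictly before $t$, by Lemma \ref{lemma:mesur} and Remark \ref{rem:Nshift}), so the point is purely the $\LL^1$–$\LL^2$ integrability after a single add-point perturbation; this can be controlled by a standard dominated-convergence argument using the integrability built into $\Scal$. The cleanest way to finalize the argument is to first establish the identity on a dense subclass of smooth cylindrical functionals, where the preceding formal manipulations are literally legitimate, and then extend by continuity using the integration-by-parts formula of Proposition \ref{prop:IPP} as in \cite{picard_formules_1996}.
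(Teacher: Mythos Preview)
The paper does not actually prove this proposition: it is stated without proof among the standard ingredients of Malliavin calculus on the Poisson space, imported from \cite{picard_formules_1996} and \cite{hillairet_malliavin-stein_2022}. Your direct computation---shifting the Poisson measure by $N\circ\eps_{(t,\theta,x)}^+=N+\delta_{(t,\theta,x)}$, expanding the integrand via $Z\circ\eps^+=Z+D Z$, and invoking $D_{(t,\theta,x)}Z_{(t,\theta,x)}=0$ from the definition of $\Ical$---is precisely the standard argument behind this identity and is correct; your caveat about verifying $D_{(t,\theta,x)}Z\in\Scal$ and closing via density of cylindrical functionals is also the usual way to make the formal manipulation rigorous.
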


\begin{notation}
    In this article, we may apply the Malliavin derivative to some random vector. We therefore use the same notation as in the 1-dimensional case and so for $\bx = \left(x_1, \dots, x_n \right)$ a random vector we have:
\begin{align*}
    D_{(t,\theta,x)}(\bx) := \left(D_{(t,\theta,x)}(x_1), \dots, D_{(t,\theta,x)}(x_n) \right), \quad (t,\theta,x) \in \RR \times \RR_+ \times \RR .
\end{align*}
Hence, each previous result on Malliavin's calculus holds coordinate-wise. 
\end{notation}

\subsection{Non-linear compound marked Hawkes processes}
We naturally begin this section with a formal definition of non-linear compound marked Hawkes processes with $(\Upsilon= - \infty)$ or without $(\Upsilon>-\infty)$ past history. All of this section is an extension of existing results, written in \cite{coutin_normal_2024} and \cite{khabou_sur_2022} for instance. More precisely, we couple the results of the two last mentioned works to get results for the non-linear marked Hawkes processes.
\begin{definition}[Non-linear compound marked Hawkes process]
\label{def:Hawkes}
Let $(\Omega, \FF, \PP)$ be a general probability space.
Let $\mu\in \RR$, $\phi:\RR_+ \to \RR$, $h: \RR \to \RR_+$, $b: \RR \to \RR$ and $g: \RR\to \RR_+$. Suppose that $(X_i)_{i\in \NN}$ are i.i.d random variable with distribution $\vartheta$. A non-linear compound marked Hawkes process $L^{(\Upsilon)}:=(L^{(\Upsilon)}_t)_{t \geq 0}$ with parameters $(h,\phi, b, \vartheta)$ is define by the following sum:
\begin{equation*}
    L^{(\Upsilon)}_t = \sum_{i=1}^{H^{\Upsilon}_t} g(X_i)
\end{equation*}
where $H^{(\Upsilon)}:=(H^{(\Upsilon)}_t)_{t \geq 0}$ is a counting process such that   
\begin{itemize}
\item[(i)] $H^{(\Upsilon)}_0=0,\quad \PP-a.s.$,
\item[(ii)] its ($\FF$-predictable) intensity process is given by
$$\lambda^{(\Upsilon)}_t:=h \left(\mu + \int_{(\Upsilon,t)} \phi(t-s)b\left(X_{H^{(\Upsilon)}_s} \right) dH^{(\Upsilon)}_s\right), \quad t\geq \Upsilon,$$
that is for any $\Upsilon \leq s \leq t $ and $A \in \Fcal_s$, $ \E{\1_A (H^{(\Upsilon)}_t-H^{(\Upsilon)}_s)} = \E{\int_{(s,t]} \1_A \lambda^{(\Upsilon)}_r dr }$.
\end{itemize}
\end{definition}
\begin{remark}
    Note that the process $H^{(\Upsilon)}$ is a non linear marked Hawkes process whose intensity is given by $\lambda^{(\Upsilon)}$.
\end{remark}

For the sake of this paper, we make use of the following notation:
\begin{notation}
   \begin{itemize}
    \item[$\bullet$] We will indicate by $L^{(\Upsilon)} =L^{(-\infty)} =:  L^\infty$, $H^{(\Upsilon)} =H^{(-\infty)} =:  H^\infty$ and $\lambda^{(\Upsilon)} = \lambda^{(-\infty)} =: \lambda^{\infty}$ when we deal with stationary Hawkes processes. 
    \item[$\bullet$] We will indicate by $L^{(\Upsilon)} =L^{(0)} =:  L$, $H^{(\Upsilon)} =H^{(0)} =:  H$ and $\lambda^{(\Upsilon)} = \lambda^{(0)} =: \lambda$ when we deal with Hawkes processes with empty past history. 
\end{itemize} 
\end{notation}

Inspired by \cite{bremaud_stability_1996} and \cite{khabou_sur_2022}, we introduce the following technical assumption :
\begin{assumption} \label{assump: noyau}
    We assume that: \begin{enumerate}
        \item $g(X_1)$ has a finite moment of order $4$. We make use of the following notations:
        $$m_{g,i} = \int_{\RR} \left|g(x)\right|^i \vartheta(dx),\quad i\in \{1,2,3,4\}. $$
        \item $b(X_1)$ has a finite moment of order $2$. We make use of the following notations:
        $$m_{b,i} = \int_{\RR} \left|b(x)\right|^i \vartheta(dx),\quad i\in \{1,2\}. $$
        \item $\phi : \RR_+ \to \RR$ is locally bounded and such that
        $$\left\Vert \phi\right\Vert_{\LL^1}:= \int_0^{\infty} |\phi(t)| dt < +\infty \quad \text{and} \quad m:= \int_0^{\infty} t |\phi(t) | dt < \infty$$
        \item $h: \RR \to \RR_+$ is positive and $\alpha$-Lipschitz,  with $\alpha \left\Vert \phi\right\Vert_{\LL^1} m_{b,1} < 1$.
    \end{enumerate}
\end{assumption}

\begin{remark}
    Let us discuss the different assumptions compared to the state of the art.\\ First, the existence of the fourth moment of $g(X_1)$ is not necessary in \cite{khabou_sur_2022}. Indeed, the aforementioned paper has only supposed a moment of order $3$. The difference is explained by the fact that we work in a functional setting and our methodology requires to compute the fourth moment of the quantity of interest.\\
    Secondly, we extend the convergence result that was given in \cite{zhu_nonlinear_2013} by not supposing any monotonicity on $h$ or $\phi$. Thus, we offer in this paper a new fCLTs for non linear Hawkes process with minimum assumption.
\end{remark}
The definition provided \ref{def:Hawkes} before is not the one used in this paper. Instead, we opt for representing the Hawkes process through a solution of a SDE. This representation, called the Poisson Imbedding, can be found in \cite{bremaud_stability_1996} and constitute the main tool of our analysis. Specifically, they established the following theorem and corollary:

\begin{theorem}
    \label{th:BM}
    Under Assumption \ref{assump: noyau}, the SDE below admits a unique $\FF^N$-predictable solution $\lambda^{\infty}$. 
    \begin{equation}
    \label{eq:Int_Hawkes_station}
    \lambda^{\infty}_t = h \left(\mu + \int_{(-\infty,t) \times \RR_+ \times \RR} \phi(t-u)\ind{\theta \leq \lambda_u^{\infty}} b(x) N(du,d\theta,dx) \right) ,\quad t \in \RR_+ .
    \end{equation}
    Moreover, for any $t\in \RR_+$, $\E{\lambda^\infty_t} < +\infty.$
\end{theorem}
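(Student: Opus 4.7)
The plan is to follow the classical Picard iteration approach of Brémaud--Massoulié adapted to the Poisson imbedding, using the contraction induced by the condition $\alpha\|\phi\|_{\LL^1}m_{b,1}<1$. Concretely, I would set $\lambda^{(0)}\equiv 0$ and define iteratively
\begin{equation*}
\lambda^{(n+1)}_t := h\!\left(\mu + \int_{(-\infty,t)\times\RR_+\times\RR}\phi(t-u)\ind{\theta\leq \lambda^{(n)}_u}b(x)\,N(du,d\theta,dx)\right),\quad t\in\RR.
\end{equation*}
Because $h\geq 0$ is $\alpha$-Lipschitz with $h(x)\leq h(0)+\alpha|x|$, a first step is an a priori $\LL^1$-bound uniform in $n$: taking expectations and using the intensity of $N$ gives
\begin{equation*}
\E{\lambda^{(n+1)}_t}\leq h(0)+\alpha|\mu|+\alpha m_{b,1}\int_{-\infty}^{t}|\phi(t-u)|\E{\lambda^{(n)}_u}\,du,
\end{equation*}
and since $\alpha\|\phi\|_{\LL^1}m_{b,1}<1$, a linear recursion on $\sup_t\E{\lambda^{(n)}_t}$ yields the uniform bound $C_\infty:=(h(0)+\alpha|\mu|)/(1-\alpha\|\phi\|_{\LL^1}m_{b,1})$.

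Next comes the contraction estimate. Using the $\alpha$-Lipschitz property of $h$ together with the elementary identity $|\ind{\theta\leq a}-\ind{\theta\leq b}|=\ind{\min(a,b)<\theta\leq\max(a,b)}$, the triangle inequality applied to the defining SDE produces
\begin{equation*}
\E{|\lambda^{(n+1)}_t-\lambda^{(n)}_t|}\leq \alpha m_{b,1}\int_{-\infty}^{t}|\phi(t-u)|\E{|\lambda^{(n)}_u-\lambda^{(n-1)}_u|}\,du.
\end{equation*}
Setting $\Delta_n:=\sup_{t\in\RR}\E{|\lambda^{(n+1)}_t-\lambda^{(n)}_t|}$ (finite thanks to the previous step) yields $\Delta_n\leq \alpha\|\phi\|_{\LL^1}m_{b,1}\Delta_{n-1}$, so $\Delta_n$ decays geometrically. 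Hence $(\lambda^{(n)}_t)_n$ is Cauchy in $\LL^1(\PP)$ uniformly in $t\in\RR$; the limit $\lambda^\infty_t$ inherits $\FF^N$-predictability because each $\lambda^{(n)}$ is $\FF^N$-predictable (predictability is preserved along a dense extraction). Passing to the limit in the defining SDE is justified by dominated convergence once one controls the indicator difference as above together with $|\phi|\in\LL^1$, so $\lambda^\infty$ solves \eqref{eq:Int_Hawkes_station}. Uniqueness is identical: any two $\FF^N$-predictable solutions $\lambda,\Ztil$ with finite first moment satisfy $\sup_t\E{|\lambda_t-\Ztil_t|}\leq \alpha\|\phi\|_{\LL^1}m_{b,1}\sup_t\E{|\lambda_t-\Ztil_t|}$, forcing them to coincide.

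The finite-expectation statement finally follows by applying the same linearization $h(x)\leq h(0)+\alpha|x|$ to the fixed-point equation, giving
\begin{equation*}
\sup_{t\in\RR}\E{\lambda^\infty_t}\leq h(0)+\alpha|\mu|+\alpha m_{b,1}\|\phi\|_{\LL^1}\sup_{t\in\RR}\E{\lambda^\infty_t},
\end{equation*}
and the contraction assumption yields $\sup_t\E{\lambda^\infty_t}\leq C_\infty<+\infty$.

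The main subtlety I expect is ensuring that the stochastic integral on $(-\infty,t)$ is well defined at each Picard step before any bound is available: this is resolved by the uniform a priori $\LL^1$-bound argument placed \emph{before} the contraction estimate, which guarantees that the integrand $\phi(t-u)\ind{\theta\leq\lambda^{(n)}_u}b(x)$ lies in $\LL^1(\PP\otimes\pi\otimes\vartheta)$ on $(-\infty,t)\times\RR_+\times\RR$ at every iteration. The remaining verifications (measurability of the limit, interchange of limit and integral) are routine given these moment bounds.
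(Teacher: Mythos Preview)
Your proposal is correct and follows precisely the Brémaud--Massoulié Picard iteration scheme that the paper invokes: the paper does not give its own proof of this theorem but attributes it directly to \cite{bremaud_stability_1996} (with the marked extension as in \cite{khabou_sur_2022}), and your argument is exactly that construction. The only cosmetic difference is that the paper's later computations (e.g.\ Lemma~\ref{lem: maj_esp_lambda}) linearize via $h(\mu+y)\leq h(\mu)+\alpha|y|$ rather than your $h(x)\leq h(0)+\alpha|x|$, yielding the slightly sharper constant $h(\mu)/(1-\alpha m_{b,1}\|\phi\|_{\LL^1})$, but this is immaterial for existence and finiteness.
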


\begin{corollary}
    Suppose Assumption \ref{assump: noyau} holds. We denote by $\lambda^{\infty}$ the unique solution of \eqref{eq:Int_Hawkes_station} and by $H^{\infty}$ his associated counting process, i.e.  
    $$H^{\infty}_t := \int_{(-\infty,t]\times \RR_+ \times \RR} \1_{\{\theta \leq \lambda^{\infty}_s\}} N(ds,d\theta,dx), \quad t\in \RR_+.$$
    Then, $H^{\infty}$ is a non linear marked Hawkes process with finite average intensity and whose intensity is $\lambda^{\infty}$ for the natural history of $H^\infty$.\\
    In addition, we can define the non linear compound marked Hawkes process $L^\infty$ by
    \begin{equation*}
        L^\infty_t := \int_{(-\infty,t]\times \RR_+ \times \RR} g(x) \1_{\{\theta \leq \lambda^{\infty}_s\}} N(ds,d\theta,dx), \quad t\in \RR_+.
    \end{equation*}
\end{corollary}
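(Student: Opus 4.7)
The plan is to deduce the corollary from Theorem \ref{th:BM} together with the standard machinery for thinning of marked Poisson random measures. The theorem already supplies an $\FF^N$-predictable solution $\lambda^\infty$ of the SDE with $\E{\lambda^\infty_t}<+\infty$; what remains to verify is (i) that the thinned counting process $H^\infty$ admits $\lambda^\infty$ as its intensity with respect to its own natural history in the sense of Definition \ref{def:Hawkes}(ii), and (ii) that the integral representation of $L^\infty$ is almost surely finite and agrees with the compound sum in Definition \ref{def:Hawkes}.

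First I would establish the finiteness of the mean. Since $\1_{\theta \leq \lambda^\infty_s}$ is $\FF^N$-predictable, Campbell's formula applied to the Poisson random measure $N$ with intensity $du\, d\theta\, \vartheta(dx)$ gives, for any $0 \leq s < t$,
\begin{equation*}
\E{H^\infty_t - H^\infty_s} = \E{\int_{(s,t]\times \RR_+ \times \RR} \1_{\theta \leq \lambda^\infty_u} \, du\, d\theta\, \vartheta(dx)} = \int_s^t \E{\lambda^\infty_u} \, du < +\infty,
\end{equation*}
so $H^\infty_t<+\infty$ almost surely and only finitely many atoms of $N$ are retained by the thinning over any bounded time window. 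To identify $\lambda^\infty$ as the intensity relative to the natural history of $H^\infty$, I would then observe that the defining equation \eqref{eq:Int_Hawkes_station} only integrates against atoms of $N$ satisfying $\theta \leq \lambda^\infty_u$; these retained atoms are, by construction, exactly the jump times $(T_i)_i$ and marks $(X_i)_i$ of $H^\infty$. Hence $\lambda^\infty$ is a measurable functional of $H^\infty$ alone, in particular predictable with respect to its natural history, and combined with Campbell's formula this yields the intensity characterization required by Definition \ref{def:Hawkes}(ii).

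The compound process $L^\infty$ then comes from the same thinning: each retained atom $(T_i,\theta_i,X_i)$ contributes $g(X_i)$ to the integral, so
\begin{equation*}
L^\infty_t = \int_{(-\infty,t]\times \RR_+ \times \RR} g(x) \1_{\theta \leq \lambda^\infty_s} N(ds,d\theta,dx) = \sum_{i=1}^{H^\infty_t} g(X_i),
\end{equation*}
which is almost surely finite because $H^\infty_t<+\infty$ and $g\geq 0$. The only non-cosmetic step is the identification of the $\sigma$-field generated by the retained atoms of $N$ with the natural history of $H^\infty$ — a standard consequence of Poisson thinning, and the main technical point I expect to have to justify with some care, exactly as in the analogous unmarked construction of \cite{bremaud_stability_1996}.
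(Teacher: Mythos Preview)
The paper does not supply its own proof of this corollary; it states the result immediately after Theorem \ref{th:BM} and attributes both to Br\'emaud and Massouli\'e \cite{bremaud_stability_1996} (see the sentence preceding Theorem \ref{th:BM}: ``Specifically, they established the following theorem and corollary''). Your sketch is precisely the standard thinning argument from that reference: the compensator formula gives $\lambda^\infty$ as the $\FF^N$-intensity of $H^\infty$, and the observation that the right-hand side of \eqref{eq:Int_Hawkes_station} involves only retained atoms shows $\lambda^\infty$ is $\FF^{H^\infty}$-predictable, hence also the intensity for the natural history. The step you single out as needing care --- matching the $\sigma$-field of retained atoms with the natural history of $H^\infty$ --- is exactly the substantive point in \cite{bremaud_stability_1996}, so your proposal is correct and consistent with what the paper cites rather than proves.
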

Note that the previous theorem and corollary deal with Hawkes processes on the entire real line. In our case, we work with Hawkes processes that are said to have an empty history i.e. $L(\RR_-)= H(\RR_-)=\emptyset$. We will not give the proof of the following result since it is an adaptation of Theorem 4.1.7 \cite{coutin_normal_2024}, for the non linearity of $\lambda$, combined with the Theorem 3.3 of \cite{hillairet_malliavin-stein_2022}, for the use of a tri-dimensional Poisson random measure.
\begin{theorem}
    \label{th:HRR}
    Under Assumption \ref{assump: noyau}, the SDE below admits a unique $\FF^N$-predictable solution $\lambda$. 
    \begin{equation}
    \label{eq:Int_Hawkes}
    \lambda_t = h \left(\mu + \int_{(0,t)\times \RR_+ \times \RR} \phi(t-u)\ind{\theta \leq \lambda_u} b(x)N(du,d\theta,dx) \right) ,\quad t \in \RR_+ .
    \end{equation}
    Moreover, for any $t\in \RR_+$, $\E{\lambda_t} < +\infty.$
\end{theorem}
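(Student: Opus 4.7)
The plan is to construct $\lambda$ by Picard iteration on the thinned Poisson measure, mimicking the argument of \cite[Theorem 3.3]{hillairet_malliavin-stein_2022} but with the extra layer of non-linearity inside $h$, and with the mark variable $x$ handled by integrating against $\vartheta$ in the compensator. Define $\lambda^{(0)}_t := h(\mu)$ and iteratively
\begin{equation*}
    \lambda^{(n+1)}_t := h\!\left( \mu + \int_{(0,t)\times \RR_+ \times \RR} \phi(t-u) \1_{\theta \leq \lambda^{(n)}_u} b(x)\, N(du,d\theta,dx)\right), \qquad t\in \RR_+.
\end{equation*}
Each $\lambda^{(n)}$ is $\FF^N$-predictable by induction, since the integrand depends only on $N$ restricted to $(0,t)\times \RR_+ \times \RR$ and on $\lambda^{(n-1)}$, which is predictable by hypothesis.

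Next, I would show that the sequence $(\lambda^{(n)})_n$ is Cauchy in a suitable norm. Using the $\alpha$-Lipschitz property of $h$, the elementary identity $|\1_{\theta\leq a}-\1_{\theta\leq b}|=\1_{\min(a,b)<\theta\leq \max(a,b)}$, and the fact that the compensator of $N$ on $\RR\times \RR_+\times \RR$ is $dt\, d\theta\, \vartheta(dx)$, one obtains
\begin{equation*}
    \E{\left|\lambda^{(n+1)}_t - \lambda^{(n)}_t\right|} \leq \alpha\, m_{b,1} \int_0^t |\phi(t-u)|\, \E{\left|\lambda^{(n)}_u - \lambda^{(n-1)}_u\right|} du.
\end{equation*}
Iterating this convolution inequality and using Assumption \ref{assump: noyau}, namely $\alpha \|\phi\|_{\LL^1} m_{b,1} < 1$, gives a geometric bound: setting $u_n(t) := \E{|\lambda^{(n+1)}_t-\lambda^{(n)}_t|}$, convolution with $|\phi|$ contracts by a factor $\alpha m_{b,1} \|\phi\|_{\LL^1}$ in a suitably weighted $\LL^1$-norm on $[0,T]$, so $\sum_n \sup_{t\leq T} u_n(t) < +\infty$ for every $T>0$. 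Hence $\lambda^{(n)}$ converges in $\LL^1(\Omega\times[0,T])$ to a predictable limit $\lambda$, and one passes to the limit inside $h$ (using continuity) and inside the stochastic integral (using the same compensator estimate to justify convergence of the integrals).

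For uniqueness, if $\lambda$ and $\tilde\lambda$ are two $\FF^N$-predictable solutions of \eqref{eq:Int_Hawkes}, the same Lipschitz and thinning computation gives
\begin{equation*}
    \E{|\lambda_t - \tilde\lambda_t|} \leq \alpha\, m_{b,1} \int_0^t |\phi(t-u)|\, \E{|\lambda_u - \tilde\lambda_u|} du,
\end{equation*}
and the sub-criticality condition $\alpha \|\phi\|_{\LL^1} m_{b,1} < 1$ forces $\E{|\lambda_t - \tilde\lambda_t|} \equiv 0$ via a Gronwall-type argument on the resolvent of $|\phi|$. Finally, $\E{\lambda_t} < +\infty$ is obtained by using $h(y) \leq h(\mu) + \alpha|y-\mu|$, taking expectation in \eqref{eq:Int_Hawkes}, and applying the compensator formula to reduce to the linear inequality $\E{\lambda_t} \leq h(\mu) + \alpha m_{b,1} (|\phi|\ast \E{\lambda_\cdot})(t)$, which admits a finite locally bounded solution under Assumption \ref{assump: noyau}.

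The main obstacle is the interplay between the non-linearity $h$ and the self-thinning structure $\1_{\theta\leq \lambda_u}$: one cannot linearize $h$ globally, so all estimates must pass through the Lipschitz bound, and one must be careful to compensate the Poisson measure only after exploiting the thinning identity $|\1_{\theta\leq\lambda^{(n)}_u}-\1_{\theta\leq\lambda^{(n-1)}_u}| = \1_{\min<\theta\leq\max}$ so that integration in $\theta$ produces precisely the $\LL^1$ difference $|\lambda^{(n)}_u-\lambda^{(n-1)}_u|$ needed to close the contraction. This is exactly where the combined arguments of \cite{coutin_normal_2024} (for non-linearity) and \cite{hillairet_malliavin-stein_2022} (for the three-dimensional marked Poisson measure) fit together.
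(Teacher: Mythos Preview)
Your proposal is correct and is precisely the adaptation the paper alludes to: the paper does not give a proof of this theorem at all, stating only that it ``is an adaptation of Theorem 4.1.7 \cite{coutin_normal_2024}, for the non linearity of $\lambda$, combined with the Theorem 3.3 of \cite{hillairet_malliavin-stein_2022}, for the use of a tri-dimensional Poisson random measure.'' You have filled in exactly those details --- Picard iteration on the thinned measure, the Lipschitz-plus-thinning contraction $\E{|\lambda^{(n+1)}_t-\lambda^{(n)}_t|}\leq \alpha m_{b,1}(|\phi|\ast \E{|\lambda^{(n)}_\cdot-\lambda^{(n-1)}_\cdot|})(t)$, and the resolvent bound under $\alpha\|\phi\|_{\LL^1}m_{b,1}<1$ --- so your argument and the paper's (implicit) argument coincide.
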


\begin{corollary}
    Suppose that Assumption \ref{assump: noyau} holds. We consider $(L,H,\lambda)$ the unique solution to the SDE 
    \begin{equation}
    \left\lbrace
    \begin{array}{l}
        L_t = \int_{(0,t] \times \RR_+ \times \RR} \ind{\theta \leq \lambda_s} g(x)N(ds,d\theta,dx),\quad t \in \RR_+,\\[2ex]
        H_t = \int_{(0,t] \times \RR_+ \times \RR} \ind{\theta \leq \lambda_s} N(ds,d\theta,dx),\quad t \in \RR_+, \\[2ex]
        \lambda_t = h \left(\mu + \int_{(0,t)\times \RR_+ \times \RR} \phi(t-u)\ind{\theta \leq \lambda_u} b(x)N(du,d\theta,dx) \right),\quad t \in \RR_+
    \end{array}
    \right. .
    \end{equation}
    We set $\FF^L$ the natural filtration of $L$. Then $L$ is a Hawkes process in the sense of Definition \ref{def:Hawkes}.
\end{corollary}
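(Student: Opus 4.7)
The strategy is to read off $(L,H,\lambda)$ directly from the Poisson random measure $N$, using Theorem \ref{th:HRR} to produce $\lambda$ and then matching the resulting triplet against Definition \ref{def:Hawkes}. Theorem \ref{th:HRR} already provides the unique $\FF^N$-predictable $\lambda$ satisfying the third equation of the SDE; given such a $\lambda$, the first two lines are no longer fixed-point equations but mere definitions of the Poisson integrals $H_t$ and $L_t$. Uniqueness of the triplet is therefore immediate, and the integrability statements $\E{H_t}<+\infty$ and $\E{L_t}<+\infty$ will follow from $\E{\lambda_t}<+\infty$ combined with Campbell's formula and the moment assumption $m_{g,1}<+\infty$ from Assumption \ref{assump: noyau}.

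In a second step I would make the compound-sum structure explicit. Enumerating the atoms of $N$ retained by the thinning $\ind{\theta\leq\lambda_s}$ as $(T_n,\Theta_n,X_n)_{n\geq 1}$, one has by construction $H_t=\sum_{n\geq 1}\ind{T_n\leq t}$. Because the intensity measure of $N$ factorises as $ds\otimes d\theta\otimes\vartheta(dx)$ and the thinning $\ind{\theta\leq\lambda_s}$ depends only on $(s,\theta)$, the marking theorem for Poisson random measures shows that the third coordinates $(X_n)_{n\geq 1}$ form an i.i.d. sequence with common law $\vartheta$, independent of $H$ and of the thinning mechanism. In particular,
\begin{align*}
    L_t=\int_{(0,t]\times\RR_+\times\RR} g(x)\ind{\theta\leq\lambda_s}N(ds,d\theta,dx)=\sum_{n=1}^{H_t}g(X_n),
\end{align*}
which is exactly the compound form prescribed by Definition \ref{def:Hawkes}.

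To conclude I would rewrite the integral appearing inside $\lambda$ jump by jump,
\begin{align*}
    \int_{(0,t)\times\RR_+\times\RR}\phi(t-u)\ind{\theta\leq\lambda_u}b(x)N(du,d\theta,dx)=\sum_{n:\,T_n<t}\phi(t-T_n)b(X_n)=\int_{(0,t)}\phi(t-s)b(X_{H_s})dH_s,
\end{align*}
so that $\lambda_t=h\bigl(\mu+\int_{(0,t)}\phi(t-s)b(X_{H_s})dH_s\bigr)$, matching the intensity in Definition \ref{def:Hawkes}. The $\FF^L$-predictability of $\lambda$ (with the natural filtration understood to be augmented by the marks, as is customary when $g$ need not be injective) is then inherited from its $\FF^N$-predictability, since the above jump-wise expression depends only on the past jumps of $H$ and on the corresponding marks. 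The genuinely delicate point will be the marking step: one must justify the application of the marking theorem in the presence of a thinning that itself depends on the history of $N$. This works precisely because $\ind{\theta\leq\lambda_s}$ is separable from the mark coordinate $x$ and the intensity of $N$ is a product measure, but this separation should be documented explicitly; everything else is bookkeeping.
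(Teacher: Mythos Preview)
The paper does not give its own proof of this corollary: just before Theorem \ref{th:HRR} it states that the result ``is an adaptation of Theorem 4.1.7 \cite{coutin_normal_2024} \ldots\ combined with the Theorem 3.3 of \cite{hillairet_malliavin-stein_2022}'' and leaves it at that. Your proposal is precisely the kind of fleshing-out those references amount to---obtain $\lambda$ from Theorem \ref{th:HRR}, define $H$ and $L$ by the first two lines, enumerate the retained atoms, and match the jump-wise expressions against Definition \ref{def:Hawkes}---so in spirit you are doing exactly what the paper intends.

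One point deserves tightening. You write that the marks $(X_n)_{n\geq 1}$ are i.i.d.\ with law $\vartheta$, ``independent of $H$ and of the thinning mechanism''. The i.i.d.\ claim is correct and is all that Definition \ref{def:Hawkes} requires, but independence from $H$ is false as stated: since $\lambda$ involves $b(X_{H_s})$, past marks feed back into the intensity and hence into the law of future jump times of $H$. The correct formulation is that, because the thinning indicator $\ind{\theta\leq\lambda_s}$ does not involve the $x$-coordinate and $\lambda_s$ is $\Fcal_{s-}^N$-measurable, each $X_n$ is $\vartheta$-distributed conditionally on $\Fcal_{T_n-}^N$; iterating this gives the joint i.i.d.\ structure. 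Your identification of the ``genuinely delicate point'' is right, just phrase the conclusion as conditional independence from the past rather than independence from $H$. The caveat you insert about $\FF^L$ needing to be read as augmented by the marks when $g$ is not injective is also well placed; the paper is silent on this, and without that augmentation the $\FF^L$-predictability claim would indeed be problematic.
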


\subsection{Elements of Malliavin calculus  for non-linear compound marked Hawkes processes in continuous time}
\label{subsec: Malliavin}
Malliavin's calculus for Hawkes processes have been largely studied over the past two decades. We then recall what we estimate to be the most important result of Malliavin's calculus for the presented work. In the same line as for the Poisson imbedding, we do not give the proof since it is the same scheme of proof as presented in Chapter 3 of \cite{khabou_sur_2022}. \\
In this part, we decide to use a non-linear Hawkes process with empty history. Note that the results can also be use for a stationary Hawkes process, that is why we use the same notation as Definition \ref{def:Hawkes}.

\begin{lemma}
    Let $t \geq \Upsilon$ and $(\theta, \theta_0) \in \RR^2_+$, it holds that :
    \begin{align*}
    &\ind{\theta \leq \lambda^{(\Upsilon)}_t}\ind{\theta_0 \leq \lambda^{(\Upsilon)}_t} \left(L^{(\Upsilon)}_s \circ \eps_{(t,\theta,x)}^+, H^{(\Upsilon)}_s \circ \eps_{(t,\theta,x)}^+ , \lambda^{(\Upsilon)}_s\circ \eps_{(t,\theta,x)}^+ \right)_{s\geq \Upsilon} \\
    &=\ind{\theta \leq \lambda^{(\Upsilon)}_t}\ind{\theta_0 \leq \lambda^{(\Upsilon)}_t} \left( L^{(\Upsilon)}_s \circ \eps_{(t,\theta_0,x)}^+, H^{(\Upsilon)}_s \circ \eps_{(t,\theta_0,x)}^+ , \lambda^{(\Upsilon)}_s\circ \eps_{(t,\theta_0,x)}^+ \right)_{s\geq \Upsilon} .
    \end{align*}
\end{lemma}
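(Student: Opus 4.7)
The plan is to establish the equality on the three pieces $[\Upsilon,t)$, $\{t\}$, and $(t,+\infty)$ and glue them together using the uniqueness of the SDE solution provided by Theorem \ref{th:BM}/\ref{th:HRR}. The key intuition is that adding an atom at $(t,\theta,x)$ or at $(t,\theta_0,x)$ to the driving Poisson measure has exactly the same effect on the triple $(L^{(\Upsilon)},H^{(\Upsilon)},\lambda^{(\Upsilon)})$ as long as both $\theta$ and $\theta_0$ lie under the intensity curve $\lambda_t^{(\Upsilon)}$: in both cases a single jump is inserted at time $t$ with mark $x$.

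Concretely, for $s<t$ the random variables $L_s^{(\Upsilon)}$, $H_s^{(\Upsilon)}$ and $\lambda_s^{(\Upsilon)}$ are $\Fcal_{s}^N$-measurable with $s<t$, so Lemma \ref{lemma:mesur} (reinforced by Remark \ref{rem:Nshift}) gives that they are fixed by both $\eps^+_{(t,\theta,x)}$ and $\eps^+_{(t,\theta_0,x)}$; the equality is therefore trivial on $[\Upsilon,t)$, and in particular $\lambda_t^{(\Upsilon)}\circ\eps^+_{(t,\theta,x)}=\lambda_t^{(\Upsilon)}\circ\eps^+_{(t,\theta_0,x)}=\lambda_t^{(\Upsilon)}$ because $\lambda^{(\Upsilon)}$ is $\FF^N$-predictable. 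At $s=t$, since on the event $\{\theta\leq \lambda_t^{(\Upsilon)}\}\cap\{\theta_0\leq\lambda_t^{(\Upsilon)}\}$ both shifts add a point at time $t$ with mark $x$, a direct reading of the defining integrals yields
\begin{align*}
H_t^{(\Upsilon)}\circ\eps^+_{(t,\theta,x)}&=H_t^{(\Upsilon)}+1=H_t^{(\Upsilon)}\circ\eps^+_{(t,\theta_0,x)},\\
L_t^{(\Upsilon)}\circ\eps^+_{(t,\theta,x)}&=L_t^{(\Upsilon)}+g(x)=L_t^{(\Upsilon)}\circ\eps^+_{(t,\theta_0,x)}.
\end{align*}

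For $s>t$, I would introduce $(\widetilde L,\widetilde H,\widetilde\lambda):=(L^{(\Upsilon)},H^{(\Upsilon)},\lambda^{(\Upsilon)})\circ\eps^+_{(t,\theta,x)}$ and $(\bar L,\bar H,\bar\lambda):=(L^{(\Upsilon)},H^{(\Upsilon)},\lambda^{(\Upsilon)})\circ\eps^+_{(t,\theta_0,x)}$. Applying the shift inside \eqref{eq:Int_Hawkes_station}/\eqref{eq:Int_Hawkes} and isolating the contribution of the added atom yields, for $s>t$,
\begin{align*}
\widetilde\lambda_s &= h\!\left(\mu+\int_{(\Upsilon,s)\times\RR_+\times\RR}\phi(s-u)\ind{\theta'\leq\widetilde\lambda_u}b(x')N(du,d\theta',dx')+\phi(s-t)b(x)\ind{\theta\leq\widetilde\lambda_t}\right),\\
\bar\lambda_s &= h\!\left(\mu+\int_{(\Upsilon,s)\times\RR_+\times\RR}\phi(s-u)\ind{\theta'\leq\bar\lambda_u}b(x')N(du,d\theta',dx')+\phi(s-t)b(x)\ind{\theta_0\leq\bar\lambda_t}\right).
\end{align*}
On the event of interest, the two indicator terms are both equal to $1$ (since $\widetilde\lambda_t=\bar\lambda_t=\lambda_t^{(\Upsilon)}$ by the first step), so $\widetilde\lambda$ and $\bar\lambda$ solve the same SDE with the same initial data at $t$; the uniqueness statement in Theorem \ref{th:BM}/\ref{th:HRR} forces $\widetilde\lambda=\bar\lambda$ on $(t,+\infty)$, and then $\widetilde H=\bar H$ and $\widetilde L=\bar L$ follow from their explicit integral representations.

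The only delicate point is step three, namely justifying the expansion above: one has to see that the Malliavin shift pulled inside the stochastic integral truly splits into the original integral plus the explicit contribution of the single added atom, and that the bookkeeping for the mark $X_{H_s}$ in the compound part is consistent because the added jump carries mark $x$. Since this mechanical verification is identical to the one carried out in \cite{khabou_sur_2022} for the marked case with $g\equiv 1$, I would refer to that reference rather than rewrite the computation, exactly as announced at the beginning of Subsection~\ref{subsec: Malliavin}.
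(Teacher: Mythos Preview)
Your proposal is correct and follows exactly the standard route one finds in the references the paper cites (in particular \cite{hillairet_malliavin-stein_2022} and \cite{khabou_sur_2022}); note that the paper itself explicitly omits the proof of this lemma, announcing at the start of Subsection~\ref{subsec: Malliavin} that the argument is unchanged from those works. Your three-step decomposition (past via Lemma~\ref{lemma:mesur} and predictability of $\lambda^{(\Upsilon)}$, the instantaneous jump at $t$ via a direct reading of the integrals, and the future via the uniqueness in Theorem~\ref{th:BM}/\ref{th:HRR}) is precisely the scheme used there, and your caveat about the bookkeeping for the shifted integral and the mark is the right place to invoke the cited computation.
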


This Lemma thereupon yields Proposition \ref{prop: non importance du saut}:
\begin{proposition}
    \label{prop: non importance du saut} 
    Let $F$ be an $\Fcal_\infty^L$-measurable random variable. Then for any $t\geq \Upsilon$, for any $(\theta, \theta_0) \in \RR^2_+$ and for any $x\in \RR$,
    $$\ind{\theta \leq \lambda^{(\Upsilon)}_t} D_{(t,\theta,x)} \left( F \right) = \ind{\theta_0 \leq \lambda^{(\Upsilon)}_t} D_{(t,\theta_0,x)} \left( F \right)$$
\end{proposition}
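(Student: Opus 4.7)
The strategy is to reduce to cylindrical functionals via a monotone class argument, unfold the Malliavin derivative explicitly, and then apply the preceding lemma coordinate-wise.

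First, since $F$ is $\Fcal_\infty^L$-measurable and both sides of the claimed identity are linear in $F$ and stable under bounded monotone limits, the functional monotone class theorem lets me restrict attention to cylindrical $F$ of the form $F = f(L_{s_1}, \dots, L_{s_n})$ for some $n\in \NN^\ast$, bounded measurable $f:\RR^n \to \RR$, and times $\Upsilon \leq s_1 < \dots < s_n$. The general case then follows by a standard approximation argument.

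For such $F$, the definition of the Malliavin derivative yields
\[
D_{(t,\theta,x)} F = f\bigl(L_{s_1}\circ \eps^+_{(t,\theta,x)}, \dots, L_{s_n}\circ \eps^+_{(t,\theta,x)}\bigr) - f\bigl(L_{s_1}, \dots, L_{s_n}\bigr),
\]
and the analogous expansion holds with $\theta_0$ in place of $\theta$. Applying the preceding lemma coordinate-wise at each $s_i \geq \Upsilon$ (the lemma gives equality of all three shifted processes $(L,H,\lambda)$, so in particular of the first component), then multiplying by $\ind{\theta \leq \lambda^{(\Upsilon)}_t} \ind{\theta_0 \leq \lambda^{(\Upsilon)}_t}$, one obtains
\[
\ind{\theta \leq \lambda^{(\Upsilon)}_t}\ind{\theta_0 \leq \lambda^{(\Upsilon)}_t}\, D_{(t,\theta,x)} F \;=\; \ind{\theta \leq \lambda^{(\Upsilon)}_t}\ind{\theta_0 \leq \lambda^{(\Upsilon)}_t}\, D_{(t,\theta_0,x)} F.
\]

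It remains to peel off the extraneous indicator on each side. For this I would argue that whenever $\theta > \lambda^{(\Upsilon)}_t$ the inserted atom $(t,\theta,x)$ is immediately discarded by the thinning $\ind{\theta \leq \lambda^{(\Upsilon)}_s}$ in the SDE \eqref{eq:Int_Hawkes_station}: indeed $\lambda^{(\Upsilon)}_t \circ \eps^+_{(t,\theta,x)} = \lambda^{(\Upsilon)}_t$ because $\lambda^{(\Upsilon)}_t$ integrates only over points with time strictly less than $t$, and an induction along the jump times of $H$ after $t$ (using Remark \ref{rem:Nshift} to handle the pieces of $N$ away from $\{t\}$) propagates this invariance to the whole triple $(L,H,\lambda)$ on $[t,\infty)$. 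Consequently $D_{(t,\theta,x)} F$ vanishes on $\{\theta > \lambda^{(\Upsilon)}_t\}$, so $D_{(t,\theta,x)} F = \ind{\theta \leq \lambda^{(\Upsilon)}_t} D_{(t,\theta,x)} F$, and similarly for $\theta_0$; inserting these identities in the previous display yields the claim.

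The main obstacle is this pathwise invariance step: one must argue carefully that a rejected thinning point never perturbs the dynamics at later times, which requires a time-induction along the successive jumps of the Hawkes process (and a fixed-point argument for $\lambda$) rather than a single direct computation. The monotone class reduction is routine, and once the invariance under rejected shifts is established the rest of the proof is purely algebraic manipulation of the derivative and the preceding lemma.
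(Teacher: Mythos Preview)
Your overall strategy---reduce to cylindrical functionals by monotone class, apply the preceding lemma coordinate-wise, and then treat the rejected-shift case separately---is exactly what the paper has in mind when it writes that the lemma ``thereupon yields'' the proposition (no detailed proof is given there; the argument is deferred to \cite{khabou_sur_2022}). Your pathwise-invariance argument for $\{\theta>\lambda^{(\Upsilon)}_t\}$ is also correct, and invoking uniqueness of the SDE solution (Theorem~\ref{th:HRR}) is the clean way to propagate the invariance past time $t$ rather than an explicit induction on jump times.

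There is, however, a genuine slip in your last sentence. Substituting $D_{(t,\theta,x)}F=\ind{\theta\le\lambda^{(\Upsilon)}_t}D_{(t,\theta,x)}F$ and its $\theta_0$-analogue into the symmetric display does \emph{not} produce the stated identity: it gives
\[
\ind{\theta_0\le\lambda^{(\Upsilon)}_t}\,D_{(t,\theta,x)}F \;=\; \ind{\theta\le\lambda^{(\Upsilon)}_t}\,D_{(t,\theta_0,x)}F,
\]
with the indicators swapped relative to the claim. In fact the identity exactly as printed is false on $\{\theta\le\lambda^{(\Upsilon)}_t<\theta_0\}$: there the left-hand side equals $D_{(t,\theta,x)}F$, generically nonzero since the inserted atom is accepted by the thinning, while the right-hand side vanishes. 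The proposition is almost certainly a misprint for either the symmetric version carrying the product $\ind{\theta\le\lambda^{(\Upsilon)}_t}\ind{\theta_0\le\lambda^{(\Upsilon)}_t}$ on both sides (which you obtain directly from the lemma) or, equivalently, for $D_{(t,\theta,x)}F=\ind{\theta\le\lambda^{(\Upsilon)}_t}D_{(t,0,x)}F$. Your two ingredients do combine to give both of these, and it is the latter form that is actually used in Theorem~\ref{th:IPPH}. So the substance of your argument is right; just flag the misprint rather than asserting that the algebra closes.
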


\begin{theorem}[Integration by parts]
\label{th:IPPH}
Set $\z:=(\z_{(t,\theta)})_{(t,\theta) \in (\Upsilon, +\infty)\times\RR_+}$ the stochastic process defined as 
$$ \z_{(t,\theta)}:= \ind{\theta \leq \lambda^{(\Upsilon)}_t}, \quad (t,\theta) \in (\Upsilon, +\infty)\times\RR_+.$$
Let $Z:=(Z_{t,x})_{t \geq \Upsilon, x\in \RR}$ be a $\FF^H$-predictable process satisfying
$$\E{\int_\RR  \int_\Upsilon^{+\infty} |Z_{(t,x)}|^2 \lambda^{(\Upsilon)}_t dt \vartheta(dx) + \left(\int_\RR  \int_\Upsilon^{+\infty} Z_{(t,x)} \lambda^{(\Upsilon)}_t dt \vartheta(dx) \right)^2}<\infty.$$ It holds that 
\begin{itemize}
\item[(i)] $Z \z=\left(Z_{(t,x)} \ind{\theta \leq \lambda^{(\Upsilon)}_t}\right)_{(t,\theta,x)\in (\Upsilon, +\infty)\times\RR_+\times \RR}$ belongs to $\Scal$.
\item[(ii)] For any $\Fcal^H_\infty$-measurable random variable $F$ with $\E{|F|^2}<+\infty$,
\begin{equation}
\label{eq:IBP}
\E{F \delta(Z \ind{\theta \leq \lambda^{(\Upsilon)}_t})} = \E{ \int_\RR  \int_\Upsilon^{+\infty} \lambda^{(\Upsilon)}_t Z_{(t,x)} D_{(t,0,x)} F dt \vartheta(dx)}.
\end{equation}
\end{itemize}
\end{theorem}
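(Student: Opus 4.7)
The strategy is to reduce this statement to the general Poisson integration by parts formula of Proposition \ref{prop:IPP}, exploiting Proposition \ref{prop: non importance du saut} to remove the dependence on the auxiliary variable $\theta$ in the Malliavin derivative once we restrict to $\{\theta \leq \lambda^{(\Upsilon)}_t\}$.

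\textbf{Step 1 (part (i)).} I would first check that $Z\z$ belongs to $\Ical$, i.e., $D_{(t,\theta,x)}\bigl(Z_{(t,x)}\ind{\theta \leq \lambda^{(\Upsilon)}_t}\bigr) = 0$ for a.e.\ $(t,\theta,x)$. Since $\lambda^{(\Upsilon)}$ is $\FF^N$-predictable (hence left continuous in time) and $Z_{(t,x)}$ is $\FF^H$-predictable, adding an atom at $(t,\theta,x)$ affects neither factor: this is the content of Remark \ref{rem:Nshift} combined with Lemma \ref{lemma:mesur} applied to the predictable approximations of $Z$ and $\lambda^{(\Upsilon)}$. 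Predictability of $Z\z$ itself is then inherited from its two factors. For the $L^2$ integrability required in Definition \ref{def: div op}, I would use the identity $\int_{\RR_+} \ind{\theta \leq \lambda^{(\Upsilon)}_t}\, d\theta = \lambda^{(\Upsilon)}_t$ to translate the two quantities in Definition \ref{def: div op} into
\[
\E{\int_{\RR}\int_{\Upsilon}^{+\infty} |Z_{(t,x)}|^2 \lambda^{(\Upsilon)}_t\, dt\, \vartheta(dx)} \quad \text{and} \quad \E{\left(\int_{\RR}\int_{\Upsilon}^{+\infty} Z_{(t,x)} \lambda^{(\Upsilon)}_t\, dt\, \vartheta(dx)\right)^2},
\]
both of which are finite by the standing hypothesis on $Z$. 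Hence $Z\z \in \Scal$.

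\textbf{Step 2 (part (ii)).} Step 1 lets me invoke Proposition \ref{prop:IPP} with $Z\z$ in place of $Z$: for any $F \in L^2(\Fcal^H_\infty) \subset L^2(\Fcal^N_\infty)$,
\[
\E{F\, \delta(Z\z)} = \E{\int_{\RR \times \RR_+ \times \RR} Z_{(t,x)} \ind{\theta \leq \lambda^{(\Upsilon)}_t}\, D_{(t,\theta,x)} F\; dt\, d\theta\, \vartheta(dx)}.
\]
Since $F$ is $\Fcal^H_\infty$-measurable and therefore $\Fcal^L_\infty$-measurable, Proposition \ref{prop: non importance du saut} applies with $\theta_0 = 0$: on the event $\{\theta \leq \lambda^{(\Upsilon)}_t\}$ one has $D_{(t,\theta,x)}F = D_{(t,0,x)}F$. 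Substituting, $D_{(t,0,x)}F$ factors out of the $\theta$-integral, and integrating $\theta$ over $\RR_+$ contributes exactly $\lambda^{(\Upsilon)}_t$. Fubini's theorem (justified by the integrability of Step 1 together with the bound $\E{|F|^2}<+\infty$ via Cauchy--Schwarz) then yields the claimed identity \eqref{eq:IBP}.

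\textbf{Main obstacle.} The computations themselves are short; the real point is to set up Step 1 carefully so that all the technical conditions in the definition of $\Scal$ hold, and to observe that Proposition \ref{prop: non importance du saut} is precisely the tool that converts the three-dimensional Malliavin derivative into the effectively two-dimensional derivative $D_{(t,0,x)}$ appearing on the right-hand side of \eqref{eq:IBP}. Once this observation is made, the result follows by a direct combination of Propositions \ref{prop:IPP} and \ref{prop: non importance du saut} together with Fubini.
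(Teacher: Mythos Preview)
Your proposal is correct and follows exactly the standard route: reduce to the abstract Poisson integration by parts (Proposition~\ref{prop:IPP}) after checking $Z\z\in\Scal$, then use Proposition~\ref{prop: non importance du saut} to collapse the $\theta$-dependence of the Malliavin derivative and integrate out $\theta$ to recover the factor $\lambda^{(\Upsilon)}_t$. Note that the paper does not actually spell out a proof of this theorem; it states at the beginning of Section~\ref{subsec: Malliavin} that the proofs are omitted because they follow the same scheme as in Chapter~3 of \cite{khabou_sur_2022} (see also \cite{hillairet_malliavin-stein_2022}), and your sketch is precisely that scheme.
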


\section{Main result}
\label{sec: main result}
The aim of this paper is to give the first fCLT regarding non-linear compound marked Hawkes process. Our methodology also provide the first upper bound of the convergence rate. To do so, we will make use the same notation as before and denote the non-linear compound marked Hawkes process $L$ and its ground process $H$ (which is a non-linear marked Hawkes process whose intensity is $\lambda$). This result will provide the first functional central limit theorem. Besides, we improve the result obtained by Zhu \cite{zhu_nonlinear_2013} by reducing the monotony-type assumptions on the kernel $\phi$ and $h$. In particular, we give the first upper bound of the functional Wasserstein distance between the normalized martingale $F^{(T)}$ and a Brownian motion of variance $\sigma^2 m_{g,2} =: \Tilde{\sigma}^2$ with $\sigma^2>0$ being the expectation of the intensity of the marked non-linear Hawkes process with past history $\lambda^\infty$ and $m_{g,2} = \int_{\RR} \left|g(x) \right|^2 \vartheta(dx)$. Formally, we are interested in the following:
$$d_W \left( F^{(T)},\sigma \sqrt{m_ {g,2}}B \right) = \sup_{f\in \left(\lip_1^{\DD}, \left\|\cdot\right\|_{\LL^\infty(0,1)} \right)} \E{f\left(F^{(T)}\right)} - \E{f(B)},$$
where $\lip^{\DD}_1$ is the set of 1-Lipschtiz functions from $\DD([0,1],\RR)$ to $\RR$, $B$ a standard Brownian motion and where $F^{(T)}$ is defined by
\begin{align}
    F^{(T)}_t := F_{tT} = \frac{L_{Tt}- m_{g,1}\int_0^{Tt}\lambda_s ds}{\sqrt{T}}, \quad t\in [0,1].
\end{align}
with $m_{g,1} = \int_{\RR} g(x) \vartheta(dx)$.\\
We now state our main theorem:
\begin{theorem}
\label{thm: main}
Under Assumption \ref{assump: noyau}, we have that $\left(\frac{L_{tT}- m_{g,1}\int_0^{tT} \lambda_s ds}{\sqrt{T}}\right)_{t\in[0,1]}$ converges in law for the Skorokhod topology to $\left( \sigma \sqrt{m_{g,2}} B_t\right)_{t\in [0,1]}$ as $T\to +\infty$ where $B$ stands for the standard Brownian motion.
Moreover, there exists $C>0$ such that for any $T>0$,
    \begin{align*}
        d_W \left( F^{(T)}, \Tilde{\sigma}B \right) \leq C\frac{\ln(T)}{T^{1/10}}.
    \end{align*}
\end{theorem}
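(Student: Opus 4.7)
The plan is to reduce the functional Wasserstein estimate to a finite-dimensional Malliavin–Stein bound by time-discretization, following the strategy of Coutin–Decreusefond \cite{coutin_donskers_2020} combined with the marginal bounds of \cite{coutin_normal_2024}. Fix an integer $n\in\NN^\ast$ (to be optimized at the end) and a partition $t_k=k/n$ of $[0,1]$. Denote by $\pi_n:\DD([0,1],\RR)\to\DD([0,1],\RR)$ the operator that keeps the values on the grid and linearly interpolates (or piecewise-constantly extends) in between, and set $F^{(T),n}:=\pi_n(F^{(T)})$, $B^{n}:=\pi_n(B)$. By the triangle inequality,
\begin{equation*}
d_W\!\bigl(F^{(T)},\tilde\sigma B\bigr)\;\leq\;\underbrace{d_W\!\bigl(F^{(T)},F^{(T),n}\bigr)}_{(A)}\;+\;\underbrace{d_W\!\bigl(F^{(T),n},\tilde\sigma B^{n}\bigr)}_{(B)}\;+\;\underbrace{d_W\!\bigl(\tilde\sigma B^{n},\tilde\sigma B\bigr)}_{(C)}.
\end{equation*}
Term $(C)$ is handled by the modulus of continuity of Brownian motion: $(C)\leq C\sqrt{\ln(n)/n}$. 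Term $(A)$ requires controlling the oscillation of $F^{(T)}$ on an interval of length $1/n$, which amounts to bounding
\begin{equation*}
\EE\!\left[\sup_{|s-t|\leq 1/n}\bigl|F^{(T)}_t-F^{(T)}_s\bigr|\right]
\end{equation*}
via a chaining argument. The key ingredient is a fourth-moment estimate for the compensated process $L_t-m_{g,1}\int_0^t\lambda_u\,du$ on small intervals, which is why Assumption \ref{assump: noyau} requires a finite moment of order $4$ for $g(X_1)$. This will yield $(A)\leq C\,n^{-1/4}$ (up to logarithmic factors) uniformly in $T$.

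For the main term $(B)$, I would use that any $f\in\lip_1^{\DD}$ composed with $\pi_n$ becomes a function of the finite-dimensional vector of increments $\Delta F^{(T)}_k:=F^{(T)}_{t_k}-F^{(T)}_{t_{k-1}}$, which is $n$-Lipschitz for the $\ell^\infty$-norm (or $\sqrt{n}$-Lipschitz for the Euclidean norm after summing coordinates). I would therefore apply the multivariate Malliavin–Stein bound à la Nourdin–Peccati–Réveillac \cite{nourdin_multivariate_2010} to the vector $\bF^{(T)}:=(\Delta F^{(T)}_1,\dots,\Delta F^{(T)}_n)$, whose target is the Gaussian vector $\tilde\sigma(\Delta B_1,\dots,\Delta B_n)$ with diagonal covariance $\tilde\sigma^2/n$. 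This produces a bound of the form
\begin{equation*}
(B)\;\leq\; C\,n\,\Bigl\| \Sigma_n - \tilde\Gamma_n \Bigr\|_{H.S.}\;+\;C\,n\,\sum_{k=1}^{n}\EE\!\left[\bigl|\langle DF^{(T),k},-DL^{-1}F^{(T),k}\rangle - \tilde\sigma^2/n\bigr|^2\right]^{1/2},
\end{equation*}
where $\Sigma_n$ is the covariance matrix of $\bF^{(T)}$ and $\tilde\Gamma_n=(\tilde\sigma^2/n)\mathrm{Id}$, using the integration-by-parts formula of Theorem \ref{th:IPPH}. Each inner-product term is estimated exactly as in \cite{coutin_normal_2024} by iterating the Poisson imbedding and the divergence formula, which, after using the stationary coupling and Assumption \ref{assump: noyau}, produces a bound of order $n\cdot n\cdot T^{-1/2}=n^2/\sqrt{T}$ (one factor $n$ from the Lipschitz blow-up, one from summing the $n$ matrix entries, and $T^{-1/2}$ from each marginal increment being over an interval of length $T/n$, rescaled by $\sqrt{T}$).

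Combining the three bounds gives
\begin{equation*}
d_W\!\bigl(F^{(T)},\tilde\sigma B\bigr)\;\leq\; C\,\frac{n^2}{\sqrt{T}}\;+\;C\,n^{-1/4}\,\sqrt{\ln n}\;+\;C\,\sqrt{\frac{\ln n}{n}}.
\end{equation*}
Optimizing in $n$ by taking $n\asymp T^{2/5}$ would yield a rate $T^{-1/10}\ln T$, which matches the target. The hard part, as in \cite{coutin_normal_2024}, will be the second step: controlling the Malliavin derivatives of the increments $\Delta F^{(T)}_k$ in the non-linear setting, since $D_{(s,\theta,x)}\lambda_t$ satisfies an implicit equation involving $h'$ (or, more precisely, a Lipschitz difference of $h$) and the derivative of the compound sum itself. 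The coupling with the stationary version $\lambda^\infty$, together with Assumption \ref{assump: noyau}(4) ($\alpha\|\phi\|_{\LL^1}m_{b,1}<1$), is what makes this contraction argument work; without monotonicity on $h$ or $\phi$ one must rely solely on this Lipschitz/contraction bookkeeping to close the estimates. The remaining verifications (stationary moment bounds on $\lambda^\infty$, fourth-moment bounds on $L$, and the matrix norm comparison between $\Sigma_n$ and $\tilde\Gamma_n$) are deferred to the technical section and follow the same template as in \cite{hillairet_malliavin-stein_2022,khabou_sur_2022}.
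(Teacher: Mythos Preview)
Your three–term decomposition and the final choice $n\asymp T^{2/5}$ match the paper's strategy. The gap is entirely in step $(B)$, and it is not cosmetic.

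First, your arithmetic is inconsistent: with $(B)\lesssim n^2/\sqrt{T}$ and $n=T^{2/5}$ you get $T^{3/10}$, which diverges; this cannot produce the claimed $T^{-1/10}\ln T$. The paper's bound for $(B)$ is in fact
\[
(B)\;\leq\;C\Bigl(\frac{n^2}{T}+\frac{n}{\sqrt{T}}+\frac{n}{\sqrt{T}}\ln\!\frac{T}{n}+\frac{\sqrt{n}}{T}\Bigr),
\]
and the term $(n/\sqrt{T})\ln(T/n)$ is what dominates after setting $n=T^{2/5}$. Your accounting ``one $n$ from the Lipschitz blow-up, one $n$ from summing the matrix entries, $T^{-1/2}$ per increment'' overcounts the $n$-dependence.

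Second, the NPR inequality you wrote, with $\langle DF,-DL^{-1}F\rangle$, is the Wiener-space version. On the Poisson space the chain rule fails: $D_{(t,\theta,x)}\varphi(F)=\varphi(F+D_{(t,\theta,x)}F)-\varphi(F)$, so after Taylor expansion a genuine third-order remainder appears. In the paper this produces the term
\[
\tfrac12\max_i\|\partial^3_{iii}v\|_\infty\cdot\frac{m_{g,3}}{T^{3/2}}\int_0^T\EE[\lambda_s]\,ds
\]
(see Proposition \ref{prop: malliavin_stein_calcul}), which is absent from your display and is one of the two leading contributions.

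Third, and this is the mechanism you are missing entirely: the paper does \emph{not} apply an off-the-shelf multivariate Stein bound. Since the test functions are only Lipschitz, one smooths by the OU semigroup $P_t$ and writes the Stein identity as an integral $\int_\tau^\infty(\cdots)\,dt$ with a cutoff $\tau>0$. The derivative bounds $\|\partial^2_{ii}P_tf_\eps\|_\infty\leq \sqrt{n}\,e^{-3t/2}/(\tilde\sigma\sqrt{1-e^{-t}})$ and $\|\partial^3_{iii}P_tf_\eps\|_\infty\leq n\,e^{-5t/2}/(\tilde\sigma^2(1-e^{-t}))$ blow up at $t=0$, so the integral over $[0,\infty)$ diverges; the cutoff contributes an error $O(\sqrt{1-e^{-2\tau}})$ and is chosen as $1-e^{-\tau}=n^2/T$, which is precisely where both the $n^2/T$ term and the logarithm $\ln(T/n)$ come from. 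Without this $\tau$-device you cannot close the estimate.

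Two smaller points. For $(A)$ and $(C)$ the paper avoids chaining and the modulus of continuity altogether: since $F^{(T)}$ is a martingale, Doob plus BDG on each subinterval gives $(A)\leq C(1/T+1/n)^{1/4}$ and $(C)\leq C n^{-1/4}$ directly; this is both simpler and what makes the fourth-moment assumption on $g(X_1)$ relevant. Finally, the paper uses the piecewise-\emph{constant} projection $\Pi_n$ and the norm $\|\bx\|_{\infty,1}=\max_i|\sum_{j\leq i}x_j|$ so that $f\circ\chi$ stays $1$-Lipschitz (no factor $n$); combined with Doob on the partial sums, this is how the $n$-dependence is kept at $\sqrt{n}$ rather than $n$ in the second-derivative term.
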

\begin{remark}
\begin{enumerate}
    \item This theorem provides the first convergence rate result for non-linear Hawkes processes. In particular, by setting $b \equiv 1$ and $g\equiv 1$, we recover the classical non-linear Hawkes process. Moreover, this result extends the existing convergence results in the literature. In particular, in \cite{zhu_nonlinear_2013}, stronger assumptions are imposed on the kernel function $\phi$ and the nonlinear function $h$: $\phi$ is required to be decreasing and $h$ increasing. In Theorem \ref{thm: main}, by contrast, we make no assumption regarding the monotonicity of these functions.
    \item Let us discuss on the convergence rate itself. In Section 4.1, \cite{fomichov_implementable_2021} establishes that the 2-Wasserstein distance between a rescaled Poisson process and a Brownian motion is of order $T^{-1/4}\log(T)$. Hence, achieving a better rate appears to be challenging. Besides, \cite{besancon_diffusive_2024} whose work focus on linear Hawkes process quantify the convergence rate between $\Theta_T(H)$ and $\sigma \Theta_T(B)$ (see Lemma 6.1, \cite{besancon_diffusive_2024}) where $\Theta_T(f)$ denotes the linear interpolation of $f$ along the subdivision $\left(i/T\right)_{i=0}^T$ of $[0,1]$. The convergence rate obtained is of order $T^{-1/6}\log(T)$ in 1-Wasserstein distance. However, the error between the Hawkes process and its linear interpolation is not addressed there; an error that we control in our analysis and which significantly reduces the overall convergence rate.
\end{enumerate}
\end{remark}

To prove Theorem \ref{thm: main} , we follow the methodology of \cite{coutin_donskers_2020} and start by divided the quantity in three other quantities containing piecewise constant approximation of our processes. We define the regular subdivision $(t_i)_{i=0, \dots, n}$ of $[0,1]$ by 
\begin{align}
\label{def: subdivision}
    \forall i=0, \dots,n,\quad t_i := \frac{i}{n}.
\end{align}

For $n\in \NN^*$ and a process $h$, we thereupon set the piecewise constant function 
$$\Pi_n (h)_t := \sum_{i=0}^{n-1} h(t_i)\1_{[t_i, t_{i+1})}(t) + h(t_n)\1_{\{t_n\}}(t).$$
With this notation on hand and by denoting $\Tilde{\sigma}^2 := \sigma^2 m_{g,2}$, we can write
\begin{align*}
    \sup_{f\in \left(\lip_1^{\DD}, \left\|\cdot\right\|_{\LL^\infty(0,1)}\right)}\E{f\left( F^{(T)}\right)}-\E{f\left( \Tilde{\sigma}B\right)} \leq& \sup_{f\in \left(\lip_1^{\DD}, \left\|\cdot\right\|_{\LL^\infty(0,1)}\right)}\left\{ \E{f\left( F^{(T)}\right)}-\E{f\left( \Pi_n\left( F^{(T)} \right)\right)} \right\}\\
    &+ \sup_{f\in \left(\lip_1^{\DD}, \left\|\cdot\right\|_{\LL^\infty(0,1)}\right)}\left\{\E{f\left( \Pi_n\left( F^{(T)} \right)\right)}-\E{f\left( \Pi_n\left( \Tilde{\sigma}B \right)\right)} \right\}\\
    &+ \sup_{f\in \left(\lip_1^{\DD}, \left\|\cdot\right\|_{\LL^\infty(0,1)}\right)}\left\{ \E{f\left( \Pi_n\left( \Tilde{\sigma}B \right)\right)} - \E{f\left( \Tilde{\sigma}B\right)} \right\}.
\end{align*}

Whereas the proof of Theorem \ref{thm: main} consists in looking for upper bound of these three quantities, the main part of the proof relies on the control of the second term. We decide to write its upper bound in the following theorem: 

\begin{theorem}
\label{thm: maj_PiF_PiB}
    Under Assumption \ref{assump: noyau}, there exists $C>0$ such that for any $T>0$ and any $n\in \NN^*$, 
    \begin{align*}
        \sup_{f\in \left(\lip_1^{\DD}, \left\|\cdot\right\|_{\LL^\infty(0,1)}\right)}\left\{\E{f\left( \Pi_n\left( F^{(T)} \right)\right)}-\E{f\left( \Pi_n\left( \Tilde{\sigma}B \right)\right)} \right\} \leq C\left( \frac{n^2}{T} + \frac{n}{\sqrt{T}} + \frac{n}{\sqrt{T}} \ln\left( \frac{T}{n} \right) +\frac{\sqrt{n}}{T}\right).
    \end{align*}
\end{theorem}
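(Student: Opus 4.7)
The plan is to reduce the functional Wasserstein distance between the two piecewise constant processes to a multivariate Wasserstein distance between their $n$-dimensional marginal vectors, and then to bound the latter by a multivariate Malliavin--Stein argument in the spirit of \cite{nourdin_multivariate_2010,coutin_donskers_2020}.

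First I would observe that for any $f\in \lip_1^{\DD}$, the quantity $f(\Pi_n(X))$ depends only on the values of $X$ at the grid points $(t_i)_{i=1,\dots,n}$: if $\tilde f(x_1,\dots,x_n):=f(\Pi_n(0,x_1,\dots,x_n))$, then $\tilde f$ is $1$-Lipschitz on $\RR^n$ for the $\ell^\infty$ norm. It therefore suffices to control the $\ell^\infty$-Wasserstein distance between $\mathbf{F} := (F^{(T)}_{t_1},\dots,F^{(T)}_{t_n})$ and $\mathbf{G} := \Tilde{\sigma}(B_{t_1},\dots,B_{t_n})$. Passing to the increments $\Delta F^{(T)}_k := F^{(T)}_{t_k}-F^{(T)}_{t_{k-1}}$ and $\Delta G_k := \Tilde{\sigma}(B_{t_k}-B_{t_{k-1}})$ — the latter being i.i.d.\ $\Ncal(0,\Tilde{\sigma}^2/n)$ — the cumulative sum map has $\ell^\infty$-Lipschitz constant at most $n$, which must be carried through in the bookkeeping and explains part of the polynomial factors of $n$ appearing in the bound.

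Next, for each $f$ I would apply the Stein equation associated with the target $\Ncal(0,\Tilde{\sigma}^2 n^{-1} \mathrm{Id}_n)$: because the test function on increments is merely $\ell^\infty$-Lipschitz, a logarithmic smoothing factor of order $\ln(T/n)$ appears in the bound on the second derivatives of the Stein solution, in the standard way. Using the Poisson imbedding of Theorem \ref{th:HRR} and the integration-by-parts formula \eqref{eq:IBP}, the increment $\Delta F^{(T)}_k$ is expressed as a divergence $\delta(Z^{(k)})$ with $Z^{(k)}_{(s,\theta,x)} := T^{-1/2}(g(x)-m_{g,1})\ind{\theta \leq \lambda_s}\ind{s \in (Tt_{k-1},Tt_k]}$. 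Applying \eqref{eq:IBP} transforms the error into integrals involving the Malliavin derivatives $D\lambda$ and the intensity $\lambda$ itself. Since $D\lambda$ satisfies a linear Volterra-type equation with kernel $\phi$, a Gronwall argument exploiting the subcriticality $\alpha \|\phi\|_{\LL^1} m_{b,1}<1$ and the first moment $m=\int_0^\infty t|\phi(t)|dt<\infty$ yields the $\LL^2$ and $\LL^4$ controls on $D\lambda$ that are integrable in time.

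Tracking the four contributions: the diagonal covariance error, arising from the deviation of $\E{\lambda_s}$ from its stationary value $\sigma^2$ on intervals of length $T/n$ and amplified by the $n$ summands, combined with the Stein smoothing factor, produces the $n/\sqrt{T}$ and $(n/\sqrt{T})\ln(T/n)$ terms; the off-diagonal Malliavin--Stein remainder processed via Cauchy--Schwarz across all $n^2$ pairs contributes the $n^2/T$ term; and the fourth-moment estimate on $D\Delta F^{(T)}_k$ yields the $\sqrt{n}/T$ term. The main technical obstacle will be obtaining $\LL^4$ controls on $D\lambda$ that are integrable against $\phi$ and $t\phi$ and summable across the $n$ subintervals without losing extra factors of $n$: this is precisely where the moment conditions in Assumption \ref{assump: noyau} and the technical lemmata postponed to Section \ref{sec: tech_lem} play a decisive role.
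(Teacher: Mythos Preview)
Your overall strategy---reduce to a multivariate Stein--Malliavin problem on the vector of increments with target $\Ncal(0,\Tilde\sigma^2 n^{-1}I_n)$ and apply the integration-by-parts formula~\eqref{eq:IBP}---is the one the paper follows. However, the bookkeeping on where the factors of $n$ arise is wrong in several places and, as written, would not produce the stated bound. The crucial misstep is the claim that passing from grid values to increments costs a global $\ell^\infty$-Lipschitz factor of $n$: if you carry this factor through, every term in the final estimate picks up an extra $n$. The paper avoids this by working with the norm $\|x\|_{\infty,1}:=\max_i\bigl|\sum_{j\le i}x_j\bigr|$ on the increment vector; the composite $f\circ\chi$ is then $1$-Lipschitz for $\|\cdot\|_{\infty,1}$, and since perturbing a single increment coordinate by $h$ changes $\|\cdot\|_{\infty,1}$ by at most $|h|$, each partial derivative $\partial_i(f\circ\chi)$ is bounded by $1$, not by $n$. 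The powers of $n$ in the statement then come exclusively from $\sigma_n^{-1}=\sqrt{n}/\Tilde\sigma$ in the semigroup derivative bounds of Lemma~\ref{lem: maj_derive_semigroup}.

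Your attribution of the individual terms is also off. The off-diagonal Malliavin remainders in Proposition~\ref{prop: malliavin_stein_calcul} vanish identically (this is the adaptation of Lemma~8.2 of \cite{coutin_normal_2024}), so there is no Cauchy--Schwarz over $n^2$ pairs; the $n^2/T$ term instead arises from truncating the Stein integral $\int_0^\infty$ at a level $\tau$ with $1-e^{-\tau}=n^2/T$ and bounding $\bigl|\E{f_\eps(\bdF^{(T)})}-\E{P_\tau f_\eps(\bdF^{(T)})}\bigr|$ via Doob's inequality. The logarithm comes from integrating the third-derivative contribution $\int_\tau^\infty e^{-5t/2}(1-e^{-t})^{-1}dt\le -\ln(1-e^{-\tau})$, not from second-derivative smoothing. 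Finally, no $\LL^4$ control on $D\lambda$ is used here: only the conditional $\LL^1$ estimate of Lemma~\ref{lem: maj esp cond} enters, through Lemma~\ref{lem: maj_sum_diff_variance}; the fourth-moment hypothesis on $g(X_1)$ is needed only for the discretisation error~\eqref{eq: maj_F_PiF} in the proof of Theorem~\ref{thm: main}, not in the present theorem. (Also, your integrand $Z^{(k)}$ should carry $g(x)$, not $g(x)-m_{g,1}$: with $g(x)-m_{g,1}$ the compensator in $\delta$ vanishes and you obtain $L-m_{g,1}H$ rather than $L-m_{g,1}\int\lambda$.)
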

Let us give the proof of Theorem \ref{thm: maj_PiF_PiB} before giving the proof of our main result.
\begin{proof}[Proof of Theorem \ref{thm: maj_PiF_PiB}]
   The proof is divided in two steps. The first step consists in reducing the problem to the control of the Wasserstein distance between two finite-dimensional vectors: one formed by a finite number of increments of the Hawkes process, and the other by the corresponding finite number of increments of the Brownian motion. We then apply the Stein–Malliavin method to derive an upper bound on this distance. The main difference with \cite{coutin_normal_2024} lies in the fact that the present argument is carried out in a genuinely vector-valued setting, which requires controlling several quantities depending on the dimension, that is, on the number $n$ of increments involved.\\
   
    Let us recall that $\Tilde{\sigma}^2 = \sigma^2 m_{g,2}$ where $\sigma^2 = \E{\lambda^\infty_0}$ and $m_{g,2} = \int_\RR \left|g(x)\right|^2 \vartheta(dx)$.
\paragraph{Step 1:}~\\
The goal of this part is to find an upper bound that no more depends on $\Pi_n(F^{(T)})$ and  $\Tilde{\sigma}\Pi_n(B)$ but depends on $\bdF^{(T)}$ and $\bG$ where 
\begin{align}
\label{def: dF^T_and_G}
    \bdF^{(T)} := \left(F^{(T)}_{t_1}-F^{(T)}_{t_0}, \dots, F^{(T)}_{t_n}-F^{(T)}_{t_{n-1}} \right) \quad \text{and}\quad \bG := \Tilde{\sigma}\left( B_{t_1}-B_{t_0}, \dots, B_{t_n}-B_{t_{n-1}}\right).
\end{align}
Note that $\bG$ is a centered Gaussian vector with covariance matrix $\bsig = \frac{\Tilde{\sigma}^2}{n}\mathbf{I_n}$.
To do so, we set the function $\chi: \RR^n \to \DD\left( [0,1],\RR\right)$ which is defined by 
\begin{align*}
    \chi: (x_i)_{i=0,\dots,n-1} \mapsto \sum_{i=0}^{n-1} \left(\sum_{j=0}^{i} x_j\right) \1_{[t_i, t_{i+1})} + \sum_{j=0}^{n-1} x_j \1_{\{t_n\}}.
\end{align*}
Then, for any $f\in \left(\lip_1^{\DD}, \left\| \cdot \right\|_{\LL^\infty(0,1)} \right)$, we have 
\begin{align*}
    f\circ \Pi_n\left(F^{(T)} \right) = f\circ \chi \left(\bdF^{(T)} \right) \quad \text{and}\quad f\circ \Pi_n\left(\Tilde{\sigma} B \right) = f\circ \chi \left(\bG\right).
\end{align*}
Moreover, it can be shown that $f\circ \chi \in \left(\lip_1^{\RR^n}, \left\| \cdot\right\|_{\infty,1} \right)$ with $\left\| \cdot\right\|_{\infty,1}$ defined by
\begin{equation*}
    \forall \mathbf{x}= (x_0,\dots, x_{n-1})\in \RR^n, \left\|\mathbf{x}\right\|_{\infty,1} := \max_{i=0,\dots, n-1}\left| \sum_{j=0}^{i} x_j\right|.
\end{equation*}
And so, for any $\mathbf{x}\in \RR^n$, since $\left\| \chi (\mathbf{x})\right\|_{\LL^{\infty}(0,1)}= \max_{i=0,\dots, n-1}\left| \sum_{j=0}^{i}x_j\right| =:\left\|  \mathbf{x} \right\|_{\infty,1}$, we have 
\begin{align*}
    \sup_{f\in \left(\lip_1^{\DD}, \left\| \cdot \right\|_{\LL^\infty(0,1)}\right)} \E{f\left( \Pi_n (F^{(T)})\right)} - \E{f\left( \Pi_n(\Tilde{\sigma}B)\right)} \leq \sup_{f\in \left(\lip_1^{\RR^n}, \left\| \cdot \right\|_{\infty,1}\right)} \E{f\left( \bdF^{(T)} \right)} - \E{f\left( \bG \right)}.
\end{align*}
From now, let $f\in \left(\lip_1^{\RR^n}, \left\| \cdot\right\|_{\infty,1} \right)$ be fixed.
Then, for any $\eps>0$, there exists a $\Ccal^2$ approximation of $f$, denoted by $f_\eps$, which respects the following properties
\begin{align}
\label{eq: prop_f_eps}
    &\lim_{\eps\to 0^+} \left\|f-f_\eps \right\|_{\LL^\infty(\RR^n)} =0, & \max_{i=1, \dots, n}\left\|\partial_i f_\eps \right\|_{\LL^\infty(\RR^n)} \leq \left\| f_\eps\right\|_{lip} \leq \left\| f \right\|_{lip} \leq 1.
\end{align}
For more information on this kind of approximation we refer to \cite{nourdin_multivariate_2010} or to \cite{tudor_multidimensional_2023}.\\
So, by the triangular inequality, it holds that
\begin{align}
\label{eq: Lip_to_C2}
    \E{f(\bdF^{(T)})}-\E{f(\bG)} \leq& 2\left\| f-f_{\eps}\right\|_{\LL^\infty(\RR^n)} + \E{f_{\eps}(\bdF^{(T)})} - \E{f_{\eps}(\bG)}.
\end{align}
This inequality allows us to work with function that are 1-Lipschitz and continuously differentiable with bounded derivatives.

\paragraph{Step 2: Application of the Malliavin-Stein method}~\\
This part of the proof begins with a classical result due to the Stein method. More precisely, let us introduce the Ornstein-Uhlenbeck semigroup 
\begin{align*}
    P_t f_\eps : \bx\in \RR^n \mapsto \E{f_\eps\left( e^{-t}\bx + \sqrt{1-e^{-2t}}\bG\right)}, \quad t\geq 0.
\end{align*}
Then, we have the following equality:
\begin{align}
\label{eq: Stein_f_eps}
    f_\eps(\bx)- \E{f_\eps(\bG)} = \int_0^{\infty} \left\langle \bx, \nabla P_t f_\eps(\bx)\right\rangle_{\RR^n}-\left\langle \bsig, \nabla^2 P_t f_\eps(\bx) \right\rangle_{H.S.} dt .
\end{align}
We refer to Section \ref{subsec: Ornstein_semigroup} for the proof of this equality and result on the Ornstein-Uhlenbeck semigroup.\\
Moreover, for $\tau>0$, we also have
\begin{align}
\label{eq: Stein_tau}
    f_\eps(\bx)- \E{f_\eps(\bG)} = f_\eps(\bx)- \E{P_\tau f_\eps \left(\bdF^{(T)} \right)} + \int_\tau^{\infty} \left\langle \bx, \nabla P_t f_\eps(\bx)\right\rangle_{\RR^n}-\left\langle \bsig, \nabla^2 P_t f_\eps(\bx) \right\rangle_{H.S.} dt .
\end{align}
Therefore, plugging \eqref{eq: Stein_tau} in \eqref{eq: Lip_to_C2}, we get for any $\tau>0$, 
\begin{align*}
    \E{f(\bdF^{(T)})}-\E{f(\bG)} \leq& 2\left\| f-f_{\eps}\right\|_{{\LL^\infty(\RR^n)}} + \E{f_{\eps}(\bdF^{(T)})} - \E{P_\tau f_\eps(\bdF^{(T)})}\\
    &+\int_\tau^{\infty} \E{\left\langle \bdF^{(T)}, \nabla P_t f_\eps(\bdF^{(T)})\right\rangle_{\RR^n}-\left\langle \bsig, \nabla^2 P_t f_\eps(\bdF^{(T)}) \right\rangle_{H.S.} } dt.
\end{align*}
For the rest of the proof, we introduce the following notations:
\begin{align*}
    A^T_{\tau,\eps} &:= \E{f_{\eps}(\bdF^{(T)})} - \E{P_\tau f_\eps(\bdF^{(T)})};\\
    B^T_{\tau,\eps}(t) &:=\E{\left\langle \bdF^{(T)}, \nabla P_t f_\eps(\bdF^{(T)})\right\rangle_{\RR^n}-\left\langle \bsig, \nabla^2 P_t f_\eps(\bdF^{(T)}) \right\rangle_{H.S.} }, \quad t\in [\tau, +\infty).
\end{align*}
Since $f_\eps \in \left( \lip_1^{\RR^n}, \left\| \cdot \right\|_{\infty,1}\right)$, we obtain
\begin{align}
\label{eq: maj_A_tau_eps}
    \left|A^T_{\tau,\eps} \right| \leq (1-e^{-\tau}) \E{\left\|\bdF^{(T)} \right\|_{\infty,1}} + \sqrt{1-e^{-2\tau}} \E{\left\|\bG \right\|_{\infty,1}}.
\end{align}
Besides, we have
\begin{align*}
    \E{\left\|\bdF^{(T)} \right\|_{\infty,1}} &= \E{\max_{i=0,\dots,n} \left|\sum_{j=0}^{i-1}F^{(T)}_{t_{j+1}}-F^{(T)}_{t_{j}} \right|} = \E{\max_{i=0,\dots,n} \left|F^{(T)}_{t_{i}}\right|} \leq \sqrt{\E{\max_{i=0,\dots,n} \left|F^{(T)}_{t_{i}}\right|^2}} .
\end{align*}
And so, using the fact that $\left(F_{t_i}\right)_{i=0,\dots,n}$ is a martingale, Doob's inequality yields
\begin{align*}
    \E{\left\|\bdF^{(T)} \right\|_{\infty,1}} &\leq C \sqrt{\E{\left|F^{(T)}_{t_n}\right|^2}} = C \sqrt{\frac{m_{g,2}}{T}\int_0^T \E{\lambda_s}ds }.
\end{align*}
A direct application of Lemma $\ref{lem: maj_esp_lambda}$ yields
\begin{align*}
    \E{\left\|\bdF^{(T)} \right\|_{\infty,1}} \leq C \sqrt{\frac{h(\mu) \alpha m_{g,2} m_{b,1} \left\|\phi \right\|_{\LL^1}}{1-\alpha m_{b,1} \left\|\phi \right\|_{\LL^1}}}.
\end{align*}
Similarly, we have $\E{\left\| \bG \right\|_{\infty,1}} \leq \sigma \sqrt{m_{g,2}}$. Plugging the two last inequalities in \eqref{eq: maj_A_tau_eps}, we obtain
\begin{align}
    \left|A^T_{\tau,\eps} \right| \leq  C\sqrt{\frac{h(\mu) \alpha m_{g,2} m_{b,1} \left\|\phi \right\|_{\LL^1}}{1-\alpha m_{b,1}\left\|\phi \right\|_{\LL^1}}} (1-e^{-\tau})+ \sigma \sqrt{m_{g,2}}\sqrt{1-e^{-2\tau}} .
\end{align}
The control of second term, $B^T_{\tau,\eps}(t)$, relies on the Malliavin calculus and the different properties of the Ornstein-Uhlenbeck semigroup. The details of such method are postponed in Proposition \ref{prop: malliavin_stein_calcul} which allows us to write 
\begin{align*}
    B^T_{\tau,\eps}(t) \leq& \frac{1}{2}\max_{i=1,\dots,n}\left\|\partial_{iii}^3 P_t f_\eps \right\|_{\infty} \frac{m_{g,3}}{T^{3/2}} \int_0^T \E{\lambda_s}ds\\
    &+ m_{g,2}\max_{i=1,\dots,n}\left\|\partial_{ii}^2 P_t f_\eps \right\|_{\infty} \sum_{i=1}^n \E{\left|\frac{\sigma^2}{n}-\frac{1}{T}\int_{t_{i-1}T}^{t_i T} \lambda_s ds\right|}.
\end{align*}
By Lemma \ref{lem: maj_derive_semigroup}, we have
\begin{align*}
    B^T_{\tau,\eps}(t) \leq& \frac{m_{g,3}}{2m_{g,2}\sigma^2} \frac{e^{-5t/2}}{1-e^{-t}} \frac{n}{T^{3/2}} \int_0^T \E{\lambda_s}ds + \sqrt{m_{g,2}}\frac{\sqrt{n}e^{-3t/2}}{\sigma\sqrt{1-e^{-2t}}} \sum_{i=1}^n \E{\left|\frac{\sigma^2}{n}-\frac{1}{T}\int_{t_{i-1}T}^{t_i T} \lambda_s ds\right|}.
\end{align*}
In addition, combining Lemma \ref{lem: maj_esp_lambda} and Lemma \ref{lem: maj_sum_diff_variance}, we get
\begin{align}
\label{eq: maj_B_tau_eps}
    B^T_{\tau,\eps}(t) \leq& C\frac{n}{\sqrt{T}} \frac{e^{-5t/2}}{1-e^{-t}}+ C \frac{e^{-3t/2}}{\sqrt{1-e^{-2t}}} \left(  \frac{\sqrt{n}}{T}+\frac{n}{\sqrt{T}} \right).
\end{align}
Here, $C>0$ is a constant that depends only on $\alpha$, $\left\|\phi \right\|_{\LL^1}$, $h(\mu)$, $\sigma^2$, $m=\int_{0}^{\infty}t \phi (t) dt$ and of the $m_{b,i}$ and $m_{g,k}$ for $i=1,2$ and $k=1,2,3,4$.\\
Therefore, combining \eqref{eq: maj_A_tau_eps} and \eqref{eq: maj_B_tau_eps}, we have that there exists $C>0$ such that for any $\tau>0$, $T>0$, $\eps>0$ and $n\in \NN^\ast$,
\begin{align*}
    \E{f(\bdF^{(T)})}-\E{f(\bG)} \leq& 2\left\| f-f_{\eps}\right\|_{{\LL^\infty(\RR^n)}} + C(1-e^{-\tau}) + C\sqrt{1-e^{-2\tau}} .\\
    &+C\frac{n}{\sqrt{T}} \int_\tau^{\infty} \frac{e^{-5t/2}}{1-e^{-t}}dt + C \left( \frac{\sqrt{n}}{T}+\frac{n}{\sqrt{T}} \right)\int_\tau^{\infty}\frac{e^{-3t/2}}{\sqrt{1-e^{-2t}}}dt .
\end{align*}
Letting $\eps \to 0^+$, we obtain 
\begin{align*}
    \E{f(\bdF^{(T)})}-\E{f(\bG)} \leq& C(1-e^{-\tau}) + C\sqrt{1-e^{-2\tau}} \\
    &+C\frac{n}{\sqrt{T}} \int_\tau^{\infty} \frac{e^{-5t/2}}{1-e^{-t}}dt + C \left( \frac{\sqrt{n}}{T}+\frac{n}{\sqrt{T}} \right)\int_\tau^{\infty}\frac{e^{-3t/2}}{\sqrt{1-e^{-2t}}}dt .
\end{align*}
Moreover, we have the following upper bounds on the integrals:
\begin{align*}
    \int_\tau^{\infty} \frac{e^{-5t/2}}{1-e^{-t}}dt \leq -\ln\left( 1-e^{-\tau}\right)\quad \text{and}\quad \int_\tau^{\infty}\frac{e^{-3t/2}}{\sqrt{1-e^{-2t}}}dt \leq C.
\end{align*}
Finally, we get:
\begin{align*}
    \E{f(\bdF^{(T)})}-\E{f(\bG)} \leq& C(1-e^{-\tau})+ C\sqrt{1-e^{-2\tau}}+C\frac{n}{\sqrt{T}} \left[ -\ln\left( 1-e^{-\tau}\right)\right]+ C \left(  \frac{\sqrt{n}}{T}+\frac{n}{\sqrt{T}}\right).
\end{align*}
Taking $\tau$ such that $1-e^{-\tau}= \frac{n^2}{T}$ we conclude that
\begin{align*}
    \E{f(\bdF^{(T)})}-\E{f(\bG)} \leq& C\left( \frac{n^2}{T} + \frac{n}{\sqrt{T}} + \frac{n}{\sqrt{T}} \ln\left( \frac{T}{n} \right) +\frac{\sqrt{n}}{T}\right).
\end{align*}
\end{proof}

We now give the proof of Theorem \ref{thm: main}.
\begin{proof}[Proof of Theorem \ref{thm: main}]
    The proof of this theorem consists of controlling three quantities given by the following inequality:
    \begin{align*}
    \sup_{f\in \left(\lip_1^{\DD}, \left\|\cdot\right\|_{\LL^\infty(0,1)}\right)}\E{f\left( F^{(T)}\right)}-\E{f\left( \Tilde{\sigma}B\right)} \leq& \sup_{f\in \left(\lip_1^{\DD}, \left\|\cdot\right\|_{\LL^\infty(0,1)}\right)}\left\{ \E{f\left( F^{(T)}\right)}-\E{f\left( \Pi_n\left( F^{(T)} \right)\right)} \right\}\\
    &+ \sup_{f\in \left(\lip_1^{\DD}, \left\|\cdot\right\|_{\LL^\infty(0,1)}\right)}\left\{\E{f\left( \Pi_n\left( F^{(T)} \right)\right)}-\E{f\left( \Pi_n\left( \Tilde{\sigma}B \right)\right)} \right\}\\
    &+ \sup_{f\in \left(\lip_1^{\DD}, \left\|\cdot\right\|_{\LL^\infty(0,1)}\right)}\left\{ \E{f\left( \Pi_n\left( \Tilde{\sigma}B \right)\right)} - \E{f\left( \Tilde{\sigma}B\right)} \right\}.
\end{align*}
The upper bound regarding the second term is given by Theorem \ref{thm: maj_PiF_PiB}. Let us focus on the first term. For every $f\in \left(\lip_1^{\DD}, \left\|\cdot\right\|_{\LL^\infty(0,1)}\right)$, we have:
\begin{align*}
    \E{f\left( F^{(T)}\right)}-\E{f\left( \Pi_n\left( F^{(T)} \right)\right)} \leq \E{\left\| F^{(T)} - \Pi_n\left( F^{(T)} \right)\right\|_{\LL^\infty(0,1)}} \leq \E{\sup_{t\in [0,1]} \left| F^{(T)}_t - \Pi_n\left( F^{(T)} \right)_t\right|^4}^{1/4}.
\end{align*}
Using the discretization $(t_i)_{i=0,\dots,n}$, we split the supremum over $[0,1]$ to get suprema on smaller intervals as follows:
\begin{align*}
    \E{\sup_{t\in [0,1]} \left| F^{(T)}_t - \Pi_n\left( F^{(T)} \right)_t\right|^4} \leq \E{\sum_{i=0}^{n-1} \sup_{t\in [t_i, t_{i+1})} \left|F^{(T)}_t - F^{(T)}_{t_i}\right|^4}.
\end{align*}
Since $\left(F^{(T)}_{t_i}-F^{(T)}_t\right)_{t\in [t_i, t_{i+1})}$ is a martingale, Doob's inequality yields
\begin{align*}
    \left|\E{f\left( \Pi_n\left( F^{(T)} \right)\right)} - \E{f\left( F^{(T)}\right)} \right| \leq c_4 \left(\E{\sum_{i=0}^{n-1} \left|F^{(T)}_{t_i}-F^{(T)}_{t_{i+1}} \right|^4 }\right)^{1/4},
\end{align*}
where $c_4>0$ is an universal constant. By the Burkholder-Davis-Gundy inequality, we get
\begin{align*}
    \left|\E{f\left( \Pi_n\left( F^{(T)} \right)\right)} - \E{f\left( F^{(T)}\right)} \right| \leq \bar{c}_4 \left(\frac{m_{g,2}^2}{T^2} \sum_{i=0}^{n-1}\E{\left(\int_{t_i T }^{t_{i+1}T} \lambda_t dt\right)^2+m_{g,4} \int_{t_iT}^{t_{i+1}T }\lambda_tdt } \right)^{1/4}
\end{align*}
where $\bar{c}_4>0$ is an universal constant.
Then, by Lemma \ref{lem: maj_esp_lambda} and Lemma \ref{lem: maj_sum_int_lambda_carré}, there exists a constant $C>0$, depending only on $\alpha$, $\left\|\phi \right\|_{\LL^1}$, $h(\mu)$, $\sigma^2$, $m=\int_{0}^{\infty}t \phi (t) dt$ and the two first $b$-moments and the second and fourth $g$-moments of $\vartheta$ denoted by $m_{b,i}$ and $m_{g,i}$, such that for any $T>0$ and any $n\in \NN^\ast$,
\begin{align}
\label{eq: maj_F_PiF}
    \E{f\left( F^{(T)}\right)}-\E{f\left( \Pi_n\left( F^{(T)} \right)\right)} \leq C\left(\frac{1}{T}+\frac{1}{n} \right)^{1/4}.
\end{align}
On the other hand, we can use the same methodology to get:
\begin{align*}
    \E{f\left( \Tilde{\sigma}B\right)}-\E{f\left( \Pi_n\left( \Tilde{\sigma}B\right)\right)} \leq c_4 \left(\E{\sum_{i=0}^{n-1} \left|B_{t_i}-B_{t_{i+1}} \right|^4 }\right)^{1/4}.
\end{align*}
Since for any $i=0,\dots, n-1$, $B_{t_i}-B_{t_{i+1}} \sim \Ncal\left(0, \frac{\Tilde{\sigma}}{n} \right)$ with $\Tilde{\sigma}^2 = \sigma^2 m_{g,2}$, we obtain:
\begin{align}
\label{eq: maj_B_PiB}
    \E{f\left( \Tilde{\sigma}B\right)}-\E{f\left( \Pi_n\left( \Tilde{\sigma}B\right)\right)} \leq 3c_4 \left(\sum_{i=0}^{n-1} \frac{m_{g,2}^2\sigma^4}{n^2} \right)^{1/4}\leq 3c_4 \frac{\sigma \sqrt{m_{g,2}} }{n^{1/4}}.
\end{align}
Combining \eqref{eq: maj_F_PiF} and \eqref{eq: maj_B_PiB} with Theorem \ref{thm: maj_PiF_PiB}, we have that there exists $C>0$ depending only on $\alpha$, $\left\|\phi \right\|_{\LL^1}$, $h(\mu)$, $\sigma^2$, $m=\int_{0}^{\infty}t \phi (t) dt$ and the $b$-moments and $g$-moments of $\vartheta$, such that for any $T>0$ and any $n\in \NN^\ast$,
\begin{align*}
    \sup_{f\in \left(\lip_1^{\DD}, \left\|\cdot\right\|_{\LL^\infty(0,1)}\right)}\E{f\left( F^{(T)}\right)}-\E{f\left( \Tilde{\sigma}B\right)} \leq C\left( \frac{1}{T^{1/4}}+ \frac{1}{n^{1/4}} + \frac{n^2}{T} + \frac{n}{\sqrt{T}} + \frac{n}{\sqrt{T}} \ln\left( \frac{T}{n} \right) +\frac{\sqrt{n}}{T}\right).
\end{align*}
Finally, taking $n = \lfloor T^{2/5} \rfloor +1$ we get
\begin{align*}
    \sup_{f\in \left(\lip_1^{\DD}, \left\|\cdot\right\|_{\LL^\infty(0,1)}\right)}\E{f\left( F^{(T)}\right)}-\E{f\left( \Tilde{\sigma}B\right)} \leq C\frac{\ln(T)}{T^{1/10}}.
\end{align*}
\end{proof}

\section{Technical Lemmata}
\label{sec: tech_lem}
\subsection{Result on the non-linear (marked) Hawkes process}

In this section, we establish several results concerning the expectation of the intensity of the non linear marked Hawkes process that we denote by $H$. Although these results do not appear to be entirely new, to the best of our knowledge, there is no existing work addressing both non-linearity and marks simultaneously. Moreover, we provide inequalities that can be applied in the context of compound marked Hawkes process, denoted by $L$. Even though the proofs are rather standard and can be found, for instance, in \cite{coutin_normal_2024} or \cite{khabou_sur_2022}, we include them here for the sake of completeness.
Recall that our different processes $L$, $H$ and $\lambda$ respect the following system:
\begin{equation*}
    \left\lbrace
    \begin{array}{l}
        L_t = \int_{(0,t] \times \RR_+ \times \RR} \ind{\theta \leq \lambda_s} g(x)N(ds,d\theta,dx),\quad t \in \RR_+,\\[2ex]
        H_t = \int_{(0,t] \times \RR_+ \times \RR} \ind{\theta \leq \lambda_s} N(ds,d\theta,dx),\quad t \in \RR_+, \\[2ex]
        \lambda_t = h \left(\mu + \int_{(0,t)\times \RR_+ \times \RR} \phi(t-u)\ind{\theta \leq \lambda_u} b(x)N(du,d\theta,dx) \right),\quad t \in \RR_+
    \end{array}
    \right. .
    \end{equation*}
\begin{lemma}
\label{lem: maj_esp_lambda}
     For all $s\in \RR_+$, 
    \begin{align*}
        \E{\lambda_s} \leq h(\mu)\left(1+ \left\| \psi^{(\alpha)}\right\|_{\LL^1} \right) = \frac{h(\mu) \alpha m_{b,1}\left\|\phi \right\|_{\LL^1}}{1-\alpha m_{b,1}\left\|\phi \right\|_{\LL^1}},
    \end{align*}
    where $ m_{b,1} = \int_{\RR} \left|b(x)\right| \vartheta(dx)$ and $\psi^{(\alpha)} = \sum\limits_{k\geq 1} \alpha^k \left|\phi\right|^{\ast k} $ and $\left|\phi\right|^{\ast (k+1)} = \left|\phi\right| \ast \left|\phi\right|^{\ast k}$, for $k\geq 1$.
\end{lemma}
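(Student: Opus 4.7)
The idea is to convert the SDE that defines $\lambda_s$ into a Volterra-type integral inequality for the deterministic function $f(s) := \E{\lambda_s}$, and then solve that inequality by Picard iteration.

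First, I use that $h$ is non-negative and $\alpha$-Lipschitz, so $h(\mu + y) \leq h(\mu) + \alpha |y|$. Applied to the argument of $h$ in \eqref{eq:Int_Hawkes}, followed by the triangle inequality and Fubini, this gives
\begin{align*}
    \E{\lambda_s} \leq h(\mu) + \alpha\, \E{\int_{(0,s)\times\RR_+\times\RR} |\phi(s-u)|\,\ind{\theta\leq \lambda_u}\,|b(x)|\, N(du,d\theta,dx)}.
\end{align*}
The integrand is non-negative and $\FF^N$-predictable, so the Campbell formula (replacing $N$ by its intensity $du\,d\theta\,\vartheta(dx)$), together with $\int_0^{\lambda_u} d\theta = \lambda_u$ and $\int_\RR |b(x)|\,\vartheta(dx) = m_{b,1}$, yields the Volterra inequality
\begin{align*}
    f(s) \leq h(\mu) + \alpha\, m_{b,1}\, (|\phi|\ast f)(s).
\end{align*}

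Second, I iterate. Set $c := \alpha\, m_{b,1}$. Substituting the inequality into itself $k$ times produces, by induction,
\begin{align*}
    f(s) \leq h(\mu) \sum_{j=0}^{k} c^{j}\bigl(|\phi|^{\ast j}\ast \mathbf{1}\bigr)(s) + c^{k+1}\bigl(|\phi|^{\ast (k+1)}\ast f\bigr)(s),
\end{align*}
with the convention $|\phi|^{\ast 0}\ast \mathbf{1} \equiv 1$. Since $\||\phi|^{\ast j}\|_{\LL^1} = \|\phi\|_{\LL^1}^{j}$ and $c\,\|\phi\|_{\LL^1} < 1$ by Assumption~\ref{assump: noyau}, the series $\sum_{j\geq 0} c^{j}|\phi|^{\ast j}$ is $\LL^1_{\mathrm{loc}}$-summable and the remainder $c^{k+1}|\phi|^{\ast(k+1)}\ast f$ vanishes in $\LL^1$ on every compact set as $k\to \infty$. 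Passing to the limit gives $f(s) \leq h(\mu)\bigl(1 + (\psi^{(c)}\ast \mathbf{1})(s)\bigr)$; bounding the remaining convolution by $\|\psi^{(c)}\|_{\LL^1}$ and summing the geometric series $\sum_{j\geq 1}(c\,\|\phi\|_{\LL^1})^{j}$ produces the closed-form constant announced in the lemma.

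The main obstacle is justifying the iteration: the remainder term can only be shown to vanish once one knows that $f$ is locally finite, otherwise the inequality $c^{k+1}|\phi|^{\ast(k+1)}\ast f \to 0$ has no content. Local finiteness is provided \emph{a priori} by Theorem~\ref{th:HRR}, which states $\E{\lambda_t} < +\infty$ pointwise. A self-contained alternative would be to localize by the stopping time $\tau_M := \inf\{t\geq 0 : \lambda_t \geq M\}$, run the whole Campbell-plus-iteration argument with $\lambda_u$ replaced by $\lambda_u\wedge M$ (which is bounded, so every quantity is trivially finite), and then let $M\to +\infty$ via monotone convergence; this removes any risk of circularity. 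Once local finiteness is secured, the rest of the computation is routine.
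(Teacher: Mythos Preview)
Your proof is correct and follows essentially the same route as the paper: derive the Volterra inequality $f(s)\le h(\mu)+\alpha m_{b,1}(|\phi|\ast f)(s)$ from the Lipschitz bound on $h$ and the Campbell formula, then resolve it via the resolvent series $\psi^{(\alpha,b)}=\sum_{k\ge 1}(\alpha m_{b,1})^k|\phi|^{\ast k}$. You are in fact more explicit than the paper about the iteration and the local-finiteness issue needed to kill the remainder; the paper simply invokes the existence of $\psi^{(\alpha,b)}$ (via a cited Volterra lemma) and relies implicitly on the finiteness of $\E{\lambda_t}$ from Theorem~\ref{th:HRR}.
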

\begin{proof}
    Let $s\in \RR_+$. Using the fact that $h$ is $\alpha$-Lipschitz, we get:
    \begin{align*}
        \E{\lambda_s} &\leq h(\mu) + \alpha \E{\int_{(0,s)\times \RR_+ \times \RR} \left| \phi(s-u)b(x)\right| \ind{\theta \leq \lambda_u} N(du,d\theta,dx)}\\
        &\leq h(\mu) + \alpha \int_0^s \int_\RR \left|\phi(s-u)\right| \E{\lambda_u} \left|b(x) \right| \vartheta(dx) du\\
        &\leq h(\mu) + \alpha \E{\left|b(X_1)\right|} \int_0^s \left|\phi(s-u)\right|\E{\lambda_u} du.
    \end{align*}
    Then, since $\alpha \E{\left|b(X_1)\right|} \left\| \phi \right\|_{\LL^1}<1$ (Assumption \ref{assump: noyau}), there exists $\psi^{(\alpha,b)} := \sum\limits_{k\geq 1} \alpha^k m_{b,1}^k \left|\phi\right|^{\ast k} $ with $\left|\phi\right|^{\ast (k+1)} = \left|\phi\right| \ast \left|\phi\right|^{\ast k}$ and $m_{b,1}=\E{|b(X_1)|}$ such that
    \begin{align*}
        \E{\lambda_s} \leq h(\mu)\left(1+ \left\| \psi^{(\alpha,b)}\right\|_{\LL^1} \right) = \frac{h(\mu) \alpha \E{\left|b(X_1)\right|} \left\|\phi \right\|_{\LL^1}}{1-\alpha \E{\left|b(X_1)\right|}\left\|\phi \right\|_{\LL^1}} = \frac{h(\mu) \alpha m_{b,1} \left\|\phi \right\|_{\LL^1}}{1-\alpha m_{b,1}\left\|\phi \right\|_{\LL^1}}. 
    \end{align*}
\end{proof}

\begin{lemma} \label{lem: maj esp cond}
    Under Assumption \ref{assump: noyau}, we have for any $(u,t,\rho,x) \in \RR_+^3\times \RR$
    $$\EE_u\left[\left| D_{(u,\rho,x)}\left(\lambda_t\right) \right| \right] \leq \1_{\rho \leq \lambda_u}\frac{\left|b(x) \right|}{m_{b,1}}\psi^{(\alpha,b)}(t-u) \quad a.s.$$
    where $m_{b,1} = \E{|b(X_1)|}$ and $\psi^{(\alpha,b)} := \sum_{k\geq 1} \alpha^k m_{b,1}^k \phi^{*k}$.
\end{lemma}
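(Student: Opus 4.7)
My plan is to reduce the bound to a Volterra-type integral inequality by combining the Poisson imbedding of $\lambda$ with the Lipschitz property of $h$ and the Poisson compensation formula, and then to iterate.

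First I would dispose of the trivial cases. For $t \leq u$, the intensity $\lambda_t$ is $\Fcal^N_{t}$-measurable while the shift $\eps^+_{(u,\rho,x)}$ only adds a point at $u \geq t$, so $D_{(u,\rho,x)}\lambda_t = 0$. For $t > u$ and $\rho > \lambda_u$, the added atom satisfies $\ind{\rho \leq \lambda_u} = 0$ and therefore contributes $0$ to the drift of the SDE \eqref{eq:Int_Hawkes}; by uniqueness in Theorem \ref{th:HRR}, $\lambda_s \circ \eps^+_{(u,\rho,x)} = \lambda_s$ for every $s \in \RR_+$, hence $D_{(u,\rho,x)}\lambda_t = 0$ again. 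This reduces the problem to the regime $t > u$ and $\rho \leq \lambda_u$, in which the indicator on the right-hand side equals $1$.

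In that regime, applying $\eps^+_{(u,\rho,x)}$ to \eqref{eq:Int_Hawkes}, subtracting, and using that $h$ is $\alpha$-Lipschitz, I would derive
\begin{align*}
|D_{(u,\rho,x)}\lambda_t| \leq \alpha |\phi(t-u)||b(x)| + \alpha \int_{(u,t)\times\RR_+\times\RR} |\phi(t-v)||D_{(u,\rho,x)}\ind{\theta \leq \lambda_v}||b(y)|N(dv,d\theta,dy),
\end{align*}
where the time integration is restricted to $(u,t)$ since $\lambda_v$ is unaffected by the shift for $v \leq u$. Writing $\lambda_v^+ := \lambda_v \circ \eps^+_{(u,\rho,x)}$, the pointwise identity $|D_{(u,\rho,x)}\ind{\theta \leq \lambda_v}| = \ind{\lambda_v \wedge \lambda_v^+ < \theta \leq \lambda_v \vee \lambda_v^+}$ yields $\int_{\RR_+}|D_{(u,\rho,x)}\ind{\theta \leq \lambda_v}|d\theta = |D_{(u,\rho,x)}\lambda_v|$. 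Taking $\EE_u[\cdot]$ and invoking the compensation formula for $N$ then produces the Volterra inequality
\begin{align*}
\EE_u[|D_{(u,\rho,x)}\lambda_t|] \leq \alpha |\phi(t-u)||b(x)| + \alpha m_{b,1} \int_u^t |\phi(t-v)|\EE_u[|D_{(u,\rho,x)}\lambda_v|]dv.
\end{align*}
Closing the argument is then a Picard iteration of this inequality, convergent in $\LL^1$ because $\alpha m_{b,1}\|\phi\|_{\LL^1} < 1$ under Assumption \ref{assump: noyau}, and which sums to the resolvent series
\begin{align*}
\EE_u[|D_{(u,\rho,x)}\lambda_t|] \leq |b(x)| \sum_{k \geq 1} \alpha^{k} m_{b,1}^{k-1}|\phi|^{*k}(t-u) = \frac{|b(x)|}{m_{b,1}} \psi^{(\alpha,b)}(t-u),
\end{align*}
which, combined with the reduction step, is precisely the claim.

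I expect the main obstacle to be the compensation step, because the integrand involves the shifted process $\lambda^+$ and is not $\FF^N$-predictable in the usual sense. The way I would handle it is to fix the atom $(u,\rho,x)$: then $N + \delta_{(u,\rho,x)}$ differs from $N$ by a deterministic point, and both $(\lambda_v)_{v>u}$ and $(\lambda_v^+)_{v>u}$ become predictable with respect to the filtration generated by $\Fcal^N_u$ and $N|_{(u,+\infty)}$, which is enough to apply the martingale property of the compensated Poisson integral conditionally on $\Fcal^N_u$ and replace $N(dv,d\theta,dy)$ by $dv\,d\theta\,\vartheta(dy)$.
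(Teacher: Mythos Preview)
Your proposal is correct and follows essentially the same route as the paper: both arguments apply the shift to the Poisson-imbedded SDE, use the $\alpha$-Lipschitz property of $h$ to bound $|D_{(u,\rho,x)}\lambda_t|$, take $\EE_u$ and compensate the Poisson integral to obtain the same Volterra inequality, and then iterate to the resolvent series $\psi^{(\alpha,b)}$. Your treatment is in fact more explicit than the paper's on two points the paper glosses over --- the reduction to $t>u$, $\rho\leq\lambda_u$ via uniqueness of the SDE, and the predictability justification for the compensation step --- but the underlying argument is the same.
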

\begin{proof}
    First, for any $t\in \RR_+$, we denote by $\xi^b_t$ the following:
    $$\xi^b_t := \int_{(0,t)\times \RR_+ \times \RR} \phi(t-r)\ind{\theta \leq \lambda_r}b(y) N(dr,d\theta,dy).$$
    Then, by the definition of the Malliavin derivative, we have
    $$D_{(u,\rho,x)}\left(\xi_t^b \right) = \phi(t-u)\1_{\rho \leq \lambda_u}b(x) + \int_{(u,t)\times \RR_+\times \RR} \phi(t-r) \left(\1_{\theta \leq \lambda_r \circ \eps_{(u,\rho,x)}^{+}} - \1_{\theta\leq \lambda_r} \right) b(y)N(dr,d\theta,dy).$$
    Using the triangle inequality, we obtain
    $$\left| D_{(u,\rho,x)}\left(\xi_t^b \right) \right| \leq \left|\phi(t-u)\right|\1_{\rho \leq \lambda_u} \left| b(x)\right|+ \int_{(u,t)\times \RR_+ \times \RR} \left|\phi(t-r)\right| \left|\1_{\theta\leq \lambda_r} - \1_{\theta \leq \lambda_r\circ \eps_{(u,\rho)}^{+}} \right| \left| b(y)\right| N(dr,d\theta,dy).$$
    Then,
    $$\EE_u\left[\left| D_{(u,\rho,x)}\left(\xi_t^b \right) \right| \right] \leq \left|\phi(t-u)\right|\1_{\rho \leq \lambda_u}\left| b(x)\right| + m_{b,1}\int_u^t \left|\phi(t-r)\right| \EE_u\left[\left| D_{(u,\rho,x)}\left(\lambda_r \right) \right| \right] dr.$$
    Finally, we have that $y \mapsto \1_{u\leq y}\EE_u\left[\left|D_{(u,\rho,x)}\left(\lambda_y^\infty \right) \right| \right]$  is a subsolution of a Volterra equation since
    \begin{align*}
    \EE_u\left[\left| D_{(u,\rho,x)}\left(\lambda_t^\infty \right) \right| \right] &\leq \alpha \EE_u\left[\left| D_{(u,\rho,x)}\left(\xi_t^\infty \right) \right| \right]\\
    &\leq  \alpha \left|\phi(t-u)\right|\1_{\rho \leq \lambda_u^{\infty}} \left| b(x)\right| + \alpha m_{b,1} \int_u^t \left|\phi(t-r)\right| \EE_u\left[\left| D_{(u,\rho,x)}\left(\lambda_r^\infty \right) \right| \right] dr.
    \end{align*}
    According to Proposition A.1.4 \cite{coutin_normal_2024}, we define $\psi^{(\alpha,b)}$ has follows
    $$\psi^{(\alpha,b)} := \sum\limits_{k\geq 1} \alpha^k m_{b,1}^k\left|\phi \right|^{\ast k}$$
    where $\left| \phi\right|^{\ast (k+1)} = \left| \phi\right| \ast \left| \phi\right|^{\ast k}$ and get
\begin{align*}
    \EE_u\left[\left| D_{(u,\rho,x)}\left(\lambda_t \right) \right| \right] &\leq  \alpha \left|\phi(t-u)\right|\1_{\rho \leq \lambda_u} \left| b(x)\right|+ \alpha \left| b(x)\right| \int_u^t \psi^{(\alpha,b)}(t-r)\left|\phi(r-u)\right|\1_{\rho \leq \lambda_u} dr\\
    & \leq  \1_{\rho \leq \lambda_u} \frac{\left| b(x)\right|}{m_{b,1}}\left( \alpha m_{b,1}\left|\phi(t-u)\right| + \alpha m_{b,1}\int_u^t \psi^{(\alpha,b)}(t-r)\left|\phi(r-u)\right| dr \right) \\
    & \leq \1_{\rho \leq \lambda_u}\frac{\left| b(x)\right|}{m_{b,1}}\psi^{(\alpha,b)}(t-u).
\end{align*}
\end{proof}

\begin{lemma}
\label{lem: maj_sum_int_lambda_carré}
    Let $(t_i)_{i=0,\dots, n}$ be the subdivision defined in \ref{def: subdivision}. Then there exists $C>0$ such that for any $T>0$ and any $n\in \NN^\ast$,
    \begin{align*}
        \frac{1}{T^2} \sum_{i=0}^{n-1} \E{\left(\int_{t_i T}^{t_{i+1}T} \lambda_s ds\right)^2} \leq C\left( \frac{1}{T}+ \frac{1}{n}\right).
    \end{align*}
\end{lemma}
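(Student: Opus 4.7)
The plan is to apply Cauchy--Schwarz on each of the $n$ subintervals of length $T/n$ so as to reduce the statement to a uniform second-moment bound $\sup_{s\geq 0}\E{\lambda_s^2}<+\infty$, which will be obtained by a Volterra fixed-point argument in exactly the same spirit as Lemma \ref{lem: maj_esp_lambda}. Once this reduction is carried out, the resulting estimate is of order $C/n$, which is in particular dominated by $C(1/T + 1/n)$.

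Concretely, Cauchy--Schwarz in the variable $s$ gives, for every $i=0,\dots,n-1$,
\begin{equation*}
\left(\int_{t_iT}^{t_{i+1}T}\lambda_s\,ds\right)^2 \leq \frac{T}{n}\int_{t_iT}^{t_{i+1}T}\lambda_s^{2}\,ds,
\end{equation*}
and, summing over $i$ and taking expectation,
\begin{equation*}
\frac{1}{T^2}\sum_{i=0}^{n-1}\E{\left(\int_{t_iT}^{t_{i+1}T}\lambda_s\,ds\right)^2} \leq \frac{1}{Tn}\int_0^T\E{\lambda_s^{2}}\,ds.
\end{equation*}
Hence the lemma reduces to proving the uniform bound $\sup_{s\geq 0}\E{\lambda_s^{2}}<+\infty$, in which case the right-hand side is bounded by $C/n$, hence by $C(1/T+1/n)$.

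To establish this uniform bound --- which is the main technical obstacle --- one relies on the $\alpha$-Lipschitz property of $h$: $\lambda_s^2 \leq 2h(\mu)^2 + 2\alpha^2 (\xi_s^b)^2$, where $\xi_s^b:=\int_{(0,s)\times\RR_+\times\RR}\phi(s-u)\1_{\theta\leq\lambda_u}b(x)\,N(du,d\theta,dx)$. Decomposing $\xi_s^b$ into its divergence part $\delta(Z_s)$ and its compensator (Definition \ref{def: div op}), the Poisson isometry $\E{\delta(Z_s)^2}=\E{\int Z_s^2\,d\pi\otimes\vartheta}$ gives a first contribution of the form $m_{b,2}\int_0^s\phi(s-u)^2\E{\lambda_u}\,du$, which is controlled using Lemma \ref{lem: maj_esp_lambda}; the squared compensator is handled via Cauchy--Schwarz in the weighted measure $|\phi(s-u)|\,du$, which turns $\E{(\int_0^s\phi(s-u)\lambda_u\,du)^2}$ into $\|\phi\|_{\LL^1}\int_0^s|\phi(s-u)|\E{\lambda_u^2}\,du$. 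Combining these two contributions yields a Volterra-type integral inequality for $\E{\lambda_s^2}$ of exactly the same structural form as the one treated in the proof of Lemma \ref{lem: maj_esp_lambda}, which admits a uniformly bounded solution thanks to the contraction condition in Assumption \ref{assump: noyau}. Plugging $\sup_s\E{\lambda_s^2}\leq C$ back into the preceding displayed estimate yields the claimed bound.
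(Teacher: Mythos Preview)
Your reduction via Cauchy--Schwarz is clean and the displayed inequality
\[
\frac{1}{T^2}\sum_{i=0}^{n-1}\E{\left(\int_{t_iT}^{t_{i+1}T}\lambda_s\,ds\right)^2}\leq \frac{1}{Tn}\int_0^T\E{\lambda_s^2}\,ds
\]
is correct. The gap is in the claimed uniform bound $\sup_{s\geq 0}\E{\lambda_s^2}<+\infty$. In your sketch, the isometry contribution is $m_{b,2}\int_0^s\phi(s-u)^2\E{\lambda_u}\,du$, and bounding $\E{\lambda_u}$ by Lemma \ref{lem: maj_esp_lambda} leaves $C\int_0^s\phi(v)^2\,dv$. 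This is uniformly bounded in $s$ only if $\phi\in\LL^2$, which is \emph{not} part of Assumption \ref{assump: noyau}: a locally bounded $\phi$ with $\phi\in\LL^1$ and $t\phi\in\LL^1$ need not lie in $\LL^2$ (take tall thin spikes, e.g.\ $\phi=\sum_n n^2\1_{[n,\,n+n^{-9/2}]}$). In the linear case one can in fact check that $\mathrm{Var}(\lambda_t)\to\infty$ when $\phi\notin\LL^2$, so this is a genuine obstruction, not a missing line. A secondary issue: with $(a+b)^2\leq 2a^2+2b^2$ applied twice, the Volterra kernel acquires a factor $4$, so its $\LL^1$ norm is $4\alpha^2\bar b^2\|\phi\|_{\LL^1}^2$, which the contraction $\alpha m_{b,1}\|\phi\|_{\LL^1}<1$ does not force below $1$; this second point is repairable via Minkowski in $\LL^2(\Omega)$, but the $\LL^2$ integrability of $\phi$ is not.

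The paper's argument avoids any pointwise second-moment bound on $\lambda$. It expands $\E{(\int_{t_iT}^{t_{i+1}T}\lambda)^2}$ as a double integral and splits into the product of means, which gives $O(T^2/n^2)$ per block and hence the $1/n$ contribution, and the covariance $\mathrm{Cov}(\lambda_s,\lambda_t)$, which is controlled through Lemma \ref{lem: maj esp cond} by $\frac{m_{b,2}}{m_{b,1}^2}\int_0^{s}\E{\lambda_u}\psi^{(\alpha,b)}(t-u)\psi^{(\alpha,b)}(s-u)\,du$. After integrating in $s$ and $t$ over the block this uses only $\|\psi^{(\alpha,b)}\|_{\LL^1}$ (finite under the contraction condition) and produces the $1/T$ term. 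If you are willing to \emph{add} the hypothesis $\phi\in\LL^2$, your route is shorter and even yields the sharper bound $C/n$; under Assumption \ref{assump: noyau} as stated, however, you need the covariance decomposition.
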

\begin{proof}
We begin this proof by noticing that for any $i=0,\dots,n-1$, we have
\begin{align*}
    \E{\left(\int_{t_i T}^{t_{i+1}T} \lambda_t dt\right)^2 } =& 2 \int_{t_i T}^{t_{i+1}T}\int_{t_i T}^s \E{\lambda_s \lambda_t} ds dt \nonumber \\
    =& \int_{t_i T}^{t_{i+1}T}\int_{t_i T}^s \E{\left(\lambda_s-\E{\lambda_s} \right)\left(\lambda_t-\E{\lambda_t} \right)} ds dt  + \int_{t_i T}^{t_{i+1}T}\int_{t_i T}^s \E{\lambda_s}\E{\lambda_t} ds dt .
\end{align*}
Lemma \ref{lem: maj_esp_lambda} yields 
\begin{align}
\label{eq: maj_int_prod_esp_lambda}
    \int_{t_i T}^{t_{i+1}T}\int_{t_i T}^s \E{\lambda_s}\E{\lambda_t} ds dt \leq \left(\frac{h(\mu) \alpha m_{b,1} \left\|\phi \right\|_{\LL^1}}{1-\alpha m_{b,1}\left\|\phi \right\|_{\LL^1}}\right)^2 \frac{T^2}{n^2}.
\end{align}

On the other hand, according to \cite{coutin_normal_2024} (see the upper bound of $T_{1,1}$ in the proof of Theorem 4.3.18 for more details), we have for $s<t$,
\begin{align*}
    \E{\left(\lambda_s-\E{\lambda_s} \right)\left(\lambda_t-\E{\lambda_t} \right)} &\leq \int_0^s \int_{\RR_+}\int_\RR  \EE_u\left[\left| D_{(u,\rho,x)}\left(\lambda_t \right) \right| \right]  \EE_u\left[\left| D_{(u,\rho,x)}\left(\lambda_s \right) \right| \right] du d\rho \vartheta(dx).
\end{align*}
And so, we get
\begin{align*}
    \E{\left(\lambda_s-\E{\lambda_s} \right)\left(\lambda_t-\E{\lambda_t} \right)} &\leq \frac{m_{b,2}}{\left|m_{b,1} \right|^2}\int_0^s \E{\lambda_u} \Psi^{(\alpha,b)}(t-u)\Psi^{(\alpha,b)}(s-u) du,
\end{align*}
where $m_{b,i}= \E{\left|b(X_1)\right|^i}$ and $\Psi^{(\alpha)}:x\mapsto \sum_{k\geq 1} \alpha^k \phi^{*k}(x)$. Thus, we get
\begin{align*}
    \int_{t_i T}^{t_{i+1}T}\int_{t_i T}^t \E{\left(\lambda_s-\E{\lambda_s} \right)\left(\lambda_t-\E{\lambda_t} \right)} ds dt &\leq \frac{m_{b,2}}{\left|m_{b,1} \right|^2} \int_{t_i T}^{t_{i+1}T}\int_{t_i T}^t \int_0^s \E{\lambda_u} \Psi^{(\alpha)}(t-u)\Psi^{(\alpha)}(s-u) du ds dt\\
    &\leq \frac{m_{b,2}}{\left|m_{b,1} \right|^2} \sup_{u\in [0,1]} \E{\lambda_u} \left\| \Psi^{(\alpha)}\right\|_{\LL^1}^2 T(t_i - t_{i+1}).
\end{align*}
Therefore, by Lemma \ref{lem: maj_esp_lambda} and using the fact that $\left\| \Psi^{(\alpha)}\right\|_{\LL^1} \leq \frac{\alpha m_{b,1}\left\|\phi \right\|_{\LL^1}}{1-\alpha m_{b,1} \left\|\phi \right\|_{\LL^1}}$, we obtain
\begin{align}
\label{eq: maj_int_cov_lambda}
    \frac{1}{T^2} \sum_{i=0}^{n-1}\int_{t_i T}^{t_{i+1}T}\int_{t_i T}^t \E{\left(\lambda_s-\E{\lambda_s} \right)\left(\lambda_t-\E{\lambda_t} \right)} ds dt \leq h(\mu)m_{b,2}m_{b,1}\left(\frac{\alpha \left\|\phi \right\|_{\LL^1}}{1-\alpha m_{b,1} \left\|\phi \right\|_{\LL^1}}\right)^3 \frac{1}{T} 
\end{align}
Combining \eqref{eq: maj_int_cov_lambda} and \eqref{eq: maj_int_prod_esp_lambda}, we get the result.
\end{proof}
Finally, we end this section by a last technical lemma. Note that the proof of this lemma will strongly relies on the proof of Theorem 4.3.18 in \cite{coutin_normal_2024}. In particular, we adapt some inequality to our case.
\begin{lemma}
\label{lem: maj_sum_diff_variance}
    For any $n\in \NN^\ast$, 
    \begin{align*}
        \sum_{i=1}^n \E{\left|\frac{\sigma^2}{n} - \frac{1}{T}\int_{t_{i-1} T}^{t_{i}T} \lambda_s ds \right|} \leq \frac{C}{T} + C\sqrt{\frac{n}{T}}.
    \end{align*}
\end{lemma}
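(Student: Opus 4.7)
The plan is to compare the empirical intensity on each interval to the stationary mean $\sigma^2=\mathbb{E}[\lambda^\infty_0]$ by introducing the stationary process $\lambda^\infty$ as an intermediate reference. Writing $\sigma^2/n=\frac{1}{T}\int_{t_{i-1}T}^{t_iT}\mathbb{E}[\lambda^\infty_s]\,ds$ and splitting
\begin{align*}
\frac{\sigma^2}{n}-\frac{1}{T}\int_{t_{i-1}T}^{t_iT}\lambda_s\,ds
=\frac{1}{T}\int_{t_{i-1}T}^{t_iT}(\lambda^\infty_s-\lambda_s)\,ds
+\frac{1}{T}\int_{t_{i-1}T}^{t_iT}(\mathbb{E}[\lambda^\infty_s]-\lambda^\infty_s)\,ds,
\end{align*}
reduces the problem to bounding two sums: a \emph{bias} term coming from the absence of past history of $\lambda$, and a \emph{fluctuation} term for the stationary process.

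For the bias term, the idea is to show $\int_0^{\infty}\mathbb{E}[|\lambda^\infty_s-\lambda_s|]\,ds\le C$. Using Lipschitzness of $h$, the two defining equations for $\lambda$ and $\lambda^\infty$ yield the Volterra inequality
\begin{align*}
\mathbb{E}[|\lambda^\infty_t-\lambda_t|]
\le \alpha m_{b,1}\sigma^2\int_t^{\infty}|\phi(u)|\,du
+\alpha m_{b,1}\int_0^{t}|\phi(t-u)|\,\mathbb{E}[|\lambda^\infty_u-\lambda_u|]\,du,
\end{align*}
(the first term comes from integrating the past history $(-\infty,0)$ of the stationary equation, while the second comes from coupling the indicators $\mathbf{1}_{\theta\le\lambda^\infty_u}$ and $\mathbf{1}_{\theta\le\lambda_u}$). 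Iterating via Proposition A.1.4 of \cite{coutin_normal_2024} bounds $\mathbb{E}[|\lambda^\infty_t-\lambda_t|]$ by a convolution involving $\Psi^{(\alpha,b)}$ and the tail $\int_t^{\infty}|\phi|$. Integrating in $t$ and using $\int_0^{\infty}t|\phi(t)|\,dt<\infty$ (Assumption~\ref{assump: noyau}) via Fubini gives a finite constant, so
\begin{align*}
\sum_{i=1}^n\mathbb{E}\!\left[\left|\frac{1}{T}\int_{t_{i-1}T}^{t_iT}(\lambda^\infty_s-\lambda_s)\,ds\right|\right]
\le \frac{1}{T}\int_0^T\mathbb{E}[|\lambda^\infty_s-\lambda_s|]\,ds\le \frac{C}{T}.
\end{align*}

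For the fluctuation term, I would apply Jensen's inequality followed by stationarity of $\lambda^\infty$:
\begin{align*}
\mathbb{E}\!\left[\left|\frac{1}{T}\int_{t_{i-1}T}^{t_iT}(\lambda^\infty_s-\mathbb{E}[\lambda^\infty_s])\,ds\right|\right]^2
\le \frac{2}{T^2}\int_{t_{i-1}T}^{t_iT}\int_{t_{i-1}T}^{s}\mathrm{Cov}(\lambda^\infty_s,\lambda^\infty_t)\,dt\,ds.
\end{align*}
A Malliavin covariance bound analogous to the one used in Lemma~\ref{lem: maj_sum_int_lambda_carré} (or the proof of Theorem 4.3.18 in \cite{coutin_normal_2024}) controls $\mathrm{Cov}(\lambda^\infty_s,\lambda^\infty_t)$ by an integrable function of $|t-s|$ with $\int_0^{\infty}|\mathrm{Cov}(\lambda^\infty_0,\lambda^\infty_r)|\,dr<\infty$; hence each variance is $\le C/(nT)$, and summing the square roots gives
\begin{align*}
\sum_{i=1}^n\sqrt{\frac{C}{nT}}=C\sqrt{\frac{n}{T}}.
\end{align*}
Adding the two bounds yields the claim. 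The main obstacle is the Volterra estimate on $\mathbb{E}[|\lambda^\infty_t-\lambda_t|]$ and its integrability, which is where the finite first moment of $\phi$ is essential; once this is in hand, both sums reduce to straightforward stationary-covariance computations.
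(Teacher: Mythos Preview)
The proposal is correct and follows essentially the same route as the paper: the same splitting via the stationary process $\lambda^\infty$ into a bias term (controlled by a Volterra inequality and the finiteness of $\int_0^\infty t|\phi(t)|\,dt$) and a fluctuation term (controlled by Cauchy--Schwarz and a stationary covariance bound), yielding the same $C/T + C\sqrt{n/T}$. The only cosmetic difference is that you spell out the Volterra inequality for $\mathbb{E}[|\lambda^\infty_t-\lambda_t|]$ explicitly, whereas the paper simply cites the corresponding step ($T_{1,2}$) from \cite{coutin_normal_2024}.
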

\begin{proof}
    Let $n\in \NN^\ast$. Since $\E{ \lambda^{\infty}_{\cdot} } \equiv \sigma^2$, it holds that
    \begin{align*}
        \sum_{i=1}^n \E{\left|\sigma^2(t_i-t_{i-1})-\frac{1}{T}\int_{t_{i-1}T}^{t_i T} \lambda_s ds\right|}\leq& \left( \sum_{i=1}^n \frac{1}{T}\E{\left|\int_{t_{i-1}T}^{t_i T} \E{\lambda^{\infty}_s}-\lambda^{\infty}_s ds \right|} \right) + \left(\frac{1}{T}\int_{0}^{T} \E{\left| \lambda^{\infty}_s-\lambda_t\right|} ds \right)
    \end{align*}
    Now, an adaptation of the Proof of the upper bound of $T_{1,2}$ in \cite{coutin_normal_2024} gives
    \begin{align*}
        \int_{0}^{T} \E{\left| \lambda^{\infty}_s-\lambda_t\right|} ds  \leq \frac{\alpha m_{b,1}}{1-\alpha m_{b,1}\left\| \phi\right\|_{\LL^1}} \sup_{t\in \RR_+} \E{\lambda_s} \int_0^{\infty} t\phi(t) dt.
    \end{align*}
    Thus, by Lemma \ref{lem: maj_esp_lambda}, we get 
    \begin{align}
    \label{eq: maj_int_L1_diff_lambda}
       \int_{0}^{T} \E{\left| \lambda^{\infty}_s-\lambda_t\right|} ds  \leq \left(\frac{\alpha m_{b,1}}{1-\alpha m_{b,1}\left\| \phi\right\|_{\LL^1}} \right)^2 h(\mu)\left\| \phi\right\|_{\LL^1} \int_0^{\infty} t\phi(t) dt.
    \end{align}
    Note that the upper bound is a constant thanks to Assumption \ref{assump: noyau}.\\
    Besides, we also have
    \begin{align*}
        \frac{1}{T}\E{\left|\int_{t_{i-1}T}^{t_i T} \E{\lambda^{\infty}_s}-\lambda^{\infty}_s ds \right|} &\leq  \frac{1}{T} \sqrt{\E{\left|\int_{t_{i-1}T}^{t_i T} \E{\lambda^{\infty}_s}-\lambda^{\infty}_s ds \right|^2}}\\
        &\leq \frac{1}{T} \sqrt{\E{2\int_{t_{i-1}T}^{t_i T} \int_{t_{i-1}T}^{s} (\E{\lambda^{\infty}_s}-\lambda^{\infty}_s)(\E{\lambda^{\infty}_t}-\lambda^{\infty}_t)ds dt}}\\
        &\leq \sqrt{2 \sigma^2 \frac{m_{b,2}\left\| \psi^{(\alpha,b)}\right\|_{\LL^1(0,1)}^2}{|m_{b,1}|^2 Tn} }.
    \end{align*}
    where $m_{b,i}=\E{|b(X_1)|^i}$ and $\psi^{(\alpha,b)} = \sum\limits_{k\geq 1} \alpha^k m_{b,1}^k\left|\phi\right|^{\ast k} $ and $\left|\phi\right|^{\ast (k+1)} = \left|\phi\right| \ast \left|\phi\right|^{\ast k}$, for $k\geq 1$. And so, we obtain
    \begin{align}
    \label{eq: eq_maj_int_discret_station_lambda_esp}
        \frac{1}{T}\E{\left|\int_{t_{i-1}T}^{t_i T} \E{\lambda^{\infty}_s}-\lambda^{\infty}_s ds \right|} &\leq  \frac{\alpha \sigma \sqrt{2 m_{b,2}} \left\| \phi\right\|_{\LL^1}}{1-\alpha m_{b,1}\left\| \phi\right\|_{\LL^1}}  \frac{1}{\sqrt{Tn}}.
    \end{align}
    Finally, combining \eqref{eq: maj_int_L1_diff_lambda} and \eqref{eq: eq_maj_int_discret_station_lambda_esp}, we have that there exists $C>0$ independent of $T$ and $n\in \NN^\ast$ such that
    \begin{align*}
        \sum_{i=1}^n \E{\left|\frac{\sigma^2}{n} - \frac{1}{T}\int_{t_{i-1} T}^{t_{i}T} \lambda_s ds \right|} \leq \frac{C}{T} + C\sqrt{\frac{n}{T}}.
    \end{align*}
\end{proof}

\subsection{The multidimensional Malliavin-Stein method}

\label{subsec: Ornstein_semigroup}
We start this section by recalling a standard result regarding the Stein equation. 
\begin{proposition}
\label{prop: Stein_eq}
    For $\bG$ a centered gaussian vector with covariance matrix $\bsig$ and $f\in \Ccal^2(\RR^n)$, the Ornstein–Uhlenbeck semigroup
\begin{align*}
    P_t f: \bx \in \RR^n \mapsto \E{f\left( e^{-t}\bx + \sqrt{1-e^{-2t}} \bG \right)}.
\end{align*}
is solution to the Stein equation, i.e.
\begin{align}
\label{eq: Stein}
    f(\bx) - \E{f(\bG)} = \int_0^\infty \left\langle \bx, \nabla P_t f(\bx)\right\rangle_{\RR^n}-\left\langle \bsig, \nabla^2 P_t f(\bx) \right\rangle_{H.S.} dt, \quad \bx \in \RR^n.
\end{align}
\end{proposition}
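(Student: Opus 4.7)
The plan is to derive the Stein equation by recognizing $(P_t f)_{t\geq 0}$ as the semigroup of the Ornstein-Uhlenbeck process associated with $\bG$, computing its infinitesimal generator explicitly, and then applying the fundamental theorem of calculus between $t=0$ and $t=+\infty$. The key identity I aim to establish is
\begin{equation*}
\partial_t P_t f(\bx) = -\langle \bx, \nabla P_t f(\bx)\rangle_{\RR^n} + \langle \bsig, \nabla^2 P_t f(\bx)\rangle_{H.S.},
\end{equation*}
since integrating $-\partial_t P_t f$ on $[0,+\infty)$ and using the boundary values $P_0 f(\bx) = f(\bx)$ and $\lim_{t\to\infty} P_t f(\bx) = \E{f(\bG)}$ (which follows from $e^{-t}\bx \to 0$ and $\sqrt{1-e^{-2t}} \to 1$, plus dominated convergence under suitable growth on $f$) delivers the claim.

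First, I would write $Y_t := e^{-t}\bx + \sqrt{1-e^{-2t}}\bG$ and differentiate under the expectation:
\begin{equation*}
\partial_t P_t f(\bx) = \E{\nabla f(Y_t)\cdot \partial_t Y_t} = -e^{-t}\,\E{\nabla f(Y_t)\cdot \bx} + \frac{e^{-2t}}{\sqrt{1-e^{-2t}}}\,\E{\nabla f(Y_t)\cdot \bG}.
\end{equation*}
Exchanging derivative and expectation for the spatial derivatives also yields the useful identities $\nabla P_t f(\bx) = e^{-t}\,\E{\nabla f(Y_t)}$ and $\nabla^2 P_t f(\bx) = e^{-2t}\,\E{\nabla^2 f(Y_t)}$, so the first piece collapses immediately to $-\langle \bx, \nabla P_t f(\bx)\rangle_{\RR^n}$.

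For the second piece, I would invoke the Gaussian integration by parts formula (Stein's lemma): for a smooth enough $F$ and a centered Gaussian vector $\bG$ with covariance $\bsig$, $\E{G_i F(\bG)} = \sum_j \Sigma_{ij}\,\E{\partial_j F(\bG)}$. Applied coordinatewise to $F(\bG) = \partial_i f(Y_t)$ (whose $\bG$-gradient carries an extra factor $\sqrt{1-e^{-2t}}$ by the chain rule), summing over $i$ produces
\begin{equation*}
\frac{e^{-2t}}{\sqrt{1-e^{-2t}}}\,\E{\nabla f(Y_t)\cdot \bG} = e^{-2t}\sum_{i,j}\Sigma_{ij}\,\E{\partial^2_{ij}f(Y_t)} = \langle \bsig, \nabla^2 P_t f(\bx)\rangle_{H.S.},
\end{equation*}
which combines with the first piece to give the announced formula. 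Integrating $-\partial_t P_t f(\bx)$ on $(0,+\infty)$ then yields the Stein representation.

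The routine parts are the differentiation under the expectation and the Gaussian IBP; the main obstacle is purely technical and lies in justifying the interchanges and the limit $\lim_{t\to\infty} P_t f(\bx) = \E{f(\bG)}$ uniformly enough for the time-integral to converge, given that the statement only assumes $f\in\Ccal^2(\RR^n)$. In the application of this proposition (Step 2 of the proof of Theorem \ref{thm: maj_PiF_PiB}), the relevant test functions are the Lipschitz approximations $f_\eps$ which have bounded first derivatives, so at most linear growth; I would therefore either restrict the statement implicitly to functions with polynomial growth (which is the generic setting of the Nourdin--Peccati method) or apply a standard truncation / smooth cutoff argument to bypass integrability issues without altering the final identity.
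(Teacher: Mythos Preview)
The paper does not actually give its own proof of this proposition; it simply cites Nualart's monograph (Propositions 1.4.4 and 1.4.5) for the argument and its generalizations. Your proposal is the standard derivation one finds in that reference: compute the generator of the Ornstein--Uhlenbeck semigroup via Gaussian integration by parts to obtain $\partial_t P_t f = -\langle \bx,\nabla P_t f\rangle + \langle \bsig,\nabla^2 P_t f\rangle_{H.S.}$, then integrate in $t$ and use the boundary values. Your identification of the technical caveat (growth conditions on $f$ to justify the interchanges and the limit at $t\to\infty$) is also accurate and matches how the result is actually applied in the paper, where only Lipschitz test functions $f_\eps$ with bounded first derivatives are used.
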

The proof and some generalizations of \eqref{prop: Stein_eq} can be found in \cite{nualart_malliavin_1995} (Propositions 1.4.4 and 1.4.5). 
Inspired by the work of \cite{nourdin_multivariate_2010}, \cite{huang_rate_2024} or \cite{coutin_donskers_2020}, among others, we focus on the integral and establish an upper bound by using Malliavin-Stein's method. The following proposition is therefore an adaptation of the different existing results in the context of this paper. Let us remind to the reader some notations that will be in order for the next results:
\begin{align*}
    F^{(T)}_t = F_{tT}= \frac{H_{tT}-\int_0^{tT}\lambda_s ds}{\sqrt{T}},\quad 
    \bdF^T = \left(F_{t_1T}-F_{t_0T}, \dots, F_{t_n T}-F_{t_{n-1}T} \right),\quad
    \bG \sim \Ncal \left(0_{\RR^n}, \bsig \right),
\end{align*}
with $\bsig = \frac{\sigma^2}{n}\left(\int_\RR \left|g(x)\right|^2 \vartheta(dx) \right) \mathbf{I_n}= m_{g,2}\frac{\sigma^2}{n}\mathbf{I_n}$.
\begin{proposition}[Malliavin-Stein's method]
\label{prop: malliavin_stein_calcul}
    For $v\in \Ccal^3(\RR^n)$ with bounded derivatives and $T>0$
    \begin{align*}
        &\left|\E{\left\langle \bdF^{(T)}, \nabla v\left( \bdF^{(T)}\right)\right\rangle_{\RR^n}-\left\langle \bsig, \nabla^2 v\left( \bdF^{(T)}\right) \right\rangle_{H.S.}}\right|\\
        &\leq  \frac{1}{2}\max_{i=1,\dots,n}\left\|\partial_{iii}^3 v \right\|_{\infty} \frac{m_{g,3}}{T^{3/2}} \int_0^T \E{\lambda_s}ds + m_{g,2}\max_{i=1,\dots,n}\left\|\partial_{ii}^2 v \right\|_{\infty} \sum_{i=1}^n \E{\left|\frac{\sigma^2}{n}-\frac{1}{T}\int_{t_{i-1}T}^{t_i T} \lambda_s ds\right|},
    \end{align*}
    with $m_{g,i}= \int_\RR \left| g(x)\right|^i \vartheta(dx)$, $i=2,3$.
\end{proposition}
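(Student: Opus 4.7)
The plan is to combine the Poisson imbedding representation of $\bdF^{(T)}$ with the integration-by-parts formula of Theorem \ref{th:IPPH} and then perform a second-order Taylor expansion of the Malliavin increment in a single direction. First observe that
\[
\sqrt{T}\,\bdF^{(T)}_i \;=\; L_{t_iT}-L_{t_{i-1}T}-m_{g,1}\int_{t_{i-1}T}^{t_iT}\lambda_s\,ds \;=\; \delta\!\Bigl(g(x)\,\ind{\theta\le \lambda_s}\,\1_{(t_{i-1}T,t_iT]}(s)\Bigr),
\]
so Theorem \ref{th:IPPH} applied to $F=\partial_i v(\bdF^{(T)})$ and $Z_{(s,x)}=g(x)\,\1_{(t_{i-1}T,t_iT]}(s)$ yields
\[
\E{\bdF^{(T)}_i\,\partial_i v(\bdF^{(T)})} \;=\; \frac{1}{\sqrt T}\,\E{\int_\RR\int_{t_{i-1}T}^{t_iT} g(x)\,\lambda_s\,D_{(s,0,x)}\bigl(\partial_i v(\bdF^{(T)})\bigr)\,ds\,\vartheta(dx)}.
\]
Since $\bsig=\frac{\Tilde{\sigma}^2}{n}\mathbf{I_n}$ is diagonal, $\E{\langle\bsig,\nabla^2 v(\bdF^{(T)})\rangle_{H.S.}}=\sum_i\frac{m_{g,2}\sigma^2}{n}\E{\partial^2_{ii}v(\bdF^{(T)})}$, so the problem reduces to controlling a sum of $n$ Malliavin integrals.

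Next, I would use the finite-difference form $D_{(s,0,x)}G=G\circ\eps_{(s,0,x)}^{+}-G$. For $s\in(t_{i-1}T,t_iT]$, Lemma \ref{lemma:mesur} together with the Hawkes SDE forces $D_{(s,0,x)}\bdF^{(T)}_j=0$ for $j<i$, while $\sqrt T\,D_{(s,0,x)}\bdF^{(T)}_i = g(x)+C_i^{(s,x)}$ and $\sqrt T\,D_{(s,0,x)}\bdF^{(T)}_j=C_j^{(s,x)}$ for $j>i$, where $C_j^{(s,x)}$ is the cascade generated by the virtual jump at time $s$. Expanding $\partial_i v$ in the single direction $e_i$ to second order gives the Lagrange identity
\[
\partial_i v\!\Bigl(\bdF^{(T)}+\tfrac{g(x)}{\sqrt T}\,e_i\Bigr)-\partial_i v(\bdF^{(T)}) \;=\; \partial^2_{ii}v(\bdF^{(T)})\,\tfrac{g(x)}{\sqrt T}+\tfrac12\,\partial^3_{iii}v(\xi)\,\tfrac{g(x)^2}{T}.
\]
Substituted into the previous display and summed over $i$, the linear term produces $\tfrac{m_{g,2}}{T}\sum_i\E{\partial^2_{ii}v(\bdF^{(T)})\int_{t_{i-1}T}^{t_iT}\lambda_s\,ds}$; comparing with $\sum_i\tfrac{m_{g,2}\sigma^2}{n}\E{\partial^2_{ii}v(\bdF^{(T)})}$ and applying the triangle inequality yields the second summand of the stated bound. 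The quadratic remainder, integrated against $g(x)\lambda_s/\sqrt T$ and telescoped over $i$, collapses into $\tfrac{m_{g,3}}{2T^{3/2}}\max_i\|\partial^3_{iii}v\|_\infty\int_0^T\E{\lambda_s}\,ds$, matching the first summand.

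The main obstacle I anticipate is to argue that the cascade pieces $C_j^{(s,x)}$ (including $C_i^{(s,x)}$ itself) do not contaminate the final estimate with mixed second- or third-derivative norms. The strategy is a two-step expansion: perform the one-dimensional Taylor expansion along $e_i$ described above, and then absorb the residual increment $D\bdF^{(T)}-\tfrac{g(x)}{\sqrt T}\,e_i$ into the two existing summands by a mean-value argument combined with the Volterra bound of Lemma \ref{lem: maj esp cond}, namely $\EE_u[|D_{(u,\rho,x)}\lambda_t|]\le\1_{\rho\le\lambda_u}\tfrac{|b(x)|}{m_{b,1}}\psi^{(\alpha,b)}(t-u)$, which gives the conditional $\LL^1$ smallness of each cascade coordinate through $\|\psi^{(\alpha,b)}\|_{\LL^1}<+\infty$. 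This Volterra estimate is the key ingredient that keeps the final bound diagonal in the indices $i$, yielding precisely the form of the proposition.
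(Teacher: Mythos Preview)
Your setup through the integration by parts and the decomposition $D_{(s,0,x)}\bdF^{(T)}=\tfrac{g(x)}{\sqrt T}e_i+\text{cascade}$ is correct, and you have rightly isolated the cascade pieces $C_j^{(s,x)}$ as the central difficulty. However, the resolution you propose does not work. The stated bound involves \emph{only} the diagonal norms $\max_i\|\partial^2_{ii}v\|_\infty$ and $\max_i\|\partial^3_{iii}v\|_\infty$, with no Volterra-type constants such as $\|\psi^{(\alpha,b)}\|_{\LL^1}$. If you control the residual $D\bdF^{(T)}-\tfrac{g(x)}{\sqrt T}e_i$ by a mean-value argument together with Lemma~\ref{lem: maj esp cond}, you inevitably pick up mixed norms $\max_{i,j}\|\partial^2_{ij}v\|_\infty$ and $\max_{i,j,k}\|\partial^3_{ijk}v\|_\infty$ multiplied by constants coming from $\|\psi^{(\alpha,b)}\|_{\LL^1}$. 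These are not dominated by, and cannot be ``absorbed into'', the two summands of the proposition: the Volterra bound is finite, not zero, so the extra terms are genuinely additional.

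What actually makes the bound diagonal is that the cascade contributions \emph{vanish exactly in expectation}, not that they are small. In the paper's argument one performs a full multivariate Taylor expansion of $D_{(t,\theta,x)}(\partial_i v)$ and then applies the Heisenberg commutation $D_{(t,\theta,x)}F_j=\Zcal_{(t,\theta)}Z^{(j)}_{(t,x)}+\delta\bigl(Z^{(j)}D_{(t,\theta,x)}\Zcal\bigr)$. Every summand containing a factor $\delta\bigl(Z^{(j)}D_{(t,\theta,x)}\Zcal\bigr)$ is shown to be zero (this is the content of the cited Lemma~8.2 in \cite{coutin_normal_2024}, a martingale/divergence orthogonality argument). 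What remains are products $\Zcal Z^{(i)}Z^{(j)}$ and $\Zcal(Z^{(i)})^3$; the disjoint supports of the $Z^{(i)}$ then force $i=j$ (resp.\ $i=j=k$), which is precisely why only diagonal derivatives appear. Replacing your Volterra step by this vanishing argument is what is needed to close the proof.
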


\begin{proof}
For the sake of simplicity, we will use the following notation:
\begin{align*}
    F_i := F_{t_i T}, \quad i=1, \dots, n.
\end{align*}
We first notice that
\begin{align*}
    \E{\left\langle \bdF^{(T)}, \nabla v\left( \bdF^{(T)}\right)\right\rangle_{\RR^n}} =& \sum_{i=1}^n \E{(F_i-F_{i-1})\partial_i v\left( \bdF^{(T)}\right)} =\sum_{i=1}^n \E{\delta\left(\Zcal Z^{(i)} \right)\partial_i v\left( \bdF^{(T)}\right)},
\end{align*}
where
\begin{align*}
    \Zcal_{(t,\theta)} := \1_{\theta \leq \lambda_t} \quad \text{and} \quad Z^{(i)}_{(t,x)} := \frac{\1_{(t_{i-1}T,t_i T]}(t)}{\sqrt{T}}g(x), \quad i=1,\dots ,n.
\end{align*}
So, Integrating by part formula of Proposition \ref{prop:IPP} yields
\begin{align*}
    \E{\left\langle \bdF^{(T)}, \nabla v\left( \bdF^{(T)}\right)\right\rangle_{\RR^n}} =& \sum_{i=1}^n \int_{\RR_+^2\times \RR}\E{\Zcal_{(t,\theta)} Z^{(i)}_{(t,x)} D_{(t,\theta,x)}\left(\partial_i v\left( \bdF^{(T)}\right) \right)} dtd\theta \vartheta(dx).
\end{align*}
Since $D_{(t,\theta,x)}\left(\partial_i v\left( \bdF^{(T)}\right) \right) = \partial_i v\left( \bdF^{(T)} \circ \eps_{(t,\theta,x)}^+\right) - \partial_i v\left( \bdF^{(T)}\right)$, we can use Taylor's expansion formula to get:
\begin{align*}
    D_{(t,\theta,x)}\left(\partial_i v\left( \bdF^{(T)}\right) \right) =& \sum_{j=1}^n D_{(t,\theta,x)}F_j \partial^2_{ji}v\left( \bdF^{(T)}\right) \\
    &+ \int_0^1 (1-s) \sum_{j=1}^n \sum_{k=1}^n \partial^3_{ijk}v\left(\bdF^{(T)}+sD_{(t,\theta,x)}\left( \bdF^{(T)}\right)\right) D_{(t,\theta,x)}(F_j) D_{(t,\theta,x)}(F_k) ds.
\end{align*}
Thus, 
\begin{align}
\label{eq: eq_prod_scal}
    \E{\left\langle \bdF^{(T)}, \nabla v\left( \bdF^{(T)}\right)\right\rangle_{\RR^n}} &= \sum_{i=1}^n \int_{\RR_+^2\times \RR}\E{\Zcal_{(t,\theta)} Z^{(i)}_{(t,x)} \sum_{j=1}^n D_{(t,\theta,x)}(F_j) \partial^2_{ji}v\left( \bdF^{(T)}\right)} dtd\theta \vartheta(dx) \nonumber \\
    &+ \sum_{i=1}^n \int_{\RR_+^2\times \RR}\E{\Zcal_{(t,\theta)} Z^{(i)}_{(t,x)} \sum_{j=1}^n \sum_{k=1}^n  D_{(t,\theta,x)}(F_j) D_{(t,\theta,x)}(F_k) \hat{F}_{ijk}^T(t)} dt d\theta \vartheta(dx),
\end{align}
where we have used the notation
\begin{align*}
    \hat{F}_{ijk}^T(t) := \int_0^1 (1-s) \partial^3_{ijk} v\left(\bdF^{(T)}+sD_{(t,\theta,x)}\left(\bdF^{(T)}\right)\right) ds
\end{align*}
We first focus on the last term that we denote by $A_2^{T}$. Using Proposition \ref{prop: heisenberg}, stated that
\begin{align}\label{eq:Heisenberg}
        D_{(t,\theta,x)}\left( F_j\right) &= D_{(t,\theta,x)}\left( \delta \left( \Zcal Z^{(j)} \right) \right) = \Zcal_{(t,\theta)} Z_{(t,x)}^{(j)} + \delta \left( Z^{(j)} D_{(t,\theta,x)}\left( \Zcal\right) \right),
\end{align}
we compute
\begin{align*}
    A_2^T =& \sum_{i,j,k=1}^n \int_{\RR_+^2\times \RR}\E{\Zcal_{(t,\theta)} Z^{(i)}_{(t,x)}  D_{(t,\theta,x)}(F_j) D_{(t,\theta,x)}(F_k) \hat{F}_{ijk}^T(t)} dt d\theta \vartheta(dx)\\
    =&\sum_{i=1}^n \int_{\RR_+^2 \times \RR}\E{\Zcal_{(t,\theta)} \left\{Z^{(i)}_{(t,x)} \right\}^3  \hat{F}_{iii}^T(t)} dt d\theta \vartheta(dx)\\
    &+ \sum_{i,j=1}^n \int_{\RR_+^2\times \RR}\E{\Zcal_{(t,\theta)} \left|Z^{(i)}_{(t,x)}\right|^2  \delta \left( Z^{(j)} D_{(t,\theta,x)}\left( \Zcal\right) \right)\hat{F}_{iji}^T(t)} dt d\theta \vartheta(dx)\\
    &+ \sum_{i,k=1}^n \int_{\RR_+^2 \times \RR}\E{\Zcal_{(t,\theta)} \left|Z^{(i)}_{(t,x)}\right|^2  \delta \left( Z^{(k)} D_{(t,\theta,x)}\left( \Zcal\right) \right)\hat{F}_{iki}^T(t)} dt d\theta \vartheta(dx) \\
    &+ \sum_{i,j,k=1}^n \int_{\RR_+^2 \times \RR}\E{\Zcal_{(t,\theta)} Z^{(i)}_{(t,x)}   \delta \left( Z^{(j)} D_{(t,\theta,x)}\left( \Zcal\right) \right)\delta \left( Z^{(k)} D_{(t,\theta,x)}\left( \Zcal\right) \right)\hat{F}_{ijk}^T(t)} dt d\theta \vartheta(dx) .
\end{align*}
So we have:
\begin{align*}
    A_2^T =&\sum_{i=1}^n \int_{\RR_+^2\times \RR}\E{\Zcal_{(t,\theta)} \left\{Z^{(i)}_{(t,x)} \right\}^3  \hat{F}_{iii}^T(t)} dt d\theta \vartheta(dx) \\
    &+ 2\sum_{i,j=1}^n \int_{\RR_+^2 \times \RR}\E{\Zcal_{(t,\theta)} \left|Z^{(i)}_{(t,x)}\right|^2  \delta \left( Z^{(j)} D_{(t,\theta,x)}\left( \Zcal\right) \right)\hat{F}_{iji}^T(t)} dt d\theta \vartheta(dx)\\
    &+ \sum_{i,j,k=1}^n \int_{\RR_+^2\times \RR}\E{\Zcal_{(t,\theta)} Z^{(i)}_{(t,x)}   \delta \left( Z^{(j)} D_{(t,\theta,x)}\left( \Zcal\right) \right)\delta \left( Z^{(k)} D_{(t,\theta,x)}\left( \Zcal\right) \right)\hat{F}_{ijk}^T(t)} dt d\theta \vartheta(dx)
\end{align*}
Besides, as for Lemma 8.2 of \cite{coutin_normal_2024}, the two last terms are null.
So, we obtain
\begin{align*}
    A_2^T =&\sum_{i=1}^n \int_{\RR_+^2\times \RR}\E{\Zcal_{(t,\theta)} \left\{Z^{(i)}_{(t,x)} \right\}^3  \hat{F}_{iii}^T(t)} dt d\theta \vartheta(dx).
\end{align*}
Since $\sup_{t\in [0,1]}\left| \hat{F}_{iii}^T(t)\right| \leq \frac{1}{2} \max_{i=1,\dots,n}\left\| \partial^3_{iii} v\right\|_{\LL^\infty(\RR^n)}$, we get:
\begin{align}
    A_2^T &\leq \frac{1}{2} \max_{i=1,\dots,n}\left\| \partial^3_{iii} v\right\|_{\LL^\infty(\RR^n)} \times \frac{1}{T^{3/2}} \left(\int_\RR \left|g(x)\right|^3 \vartheta(dx)\right) \sum_{i=1}^n  \int_{t_{i-1}T}^{t_i T} \E{\lambda_s} ds \nonumber \\
    &\leq \frac{1}{2} \max_{i=1,\dots,n}\left\| \partial^3_{iii} v\right\|_{\LL^\infty(0,1)} \times\frac{m_{g,3}}{T^{3/2}} \int_0^T \E{\lambda_s} ds.
    \label{eq: maj_A_2}
\end{align}
We now focus on the first term of equality, that is
\begin{align*}
    A_1^T := \sum_{i=1}^n \int_{\RR_+^2\times \RR}\E{\Zcal_{(t,\theta)} Z^{(i)}_{(t,x)} \sum_{j=1}^n D_{(t,\theta,x)}(F_j) \partial^2_{ji}v\left( \bdF^{(T)}\right)} dtd\theta \vartheta(dx).
\end{align*}
By the Heisenberg inequality \eqref{eq:Heisenberg}, we have
\begin{align*}
    A^T_1 =& \sum_{i=1}^n  \sum_{j=1}^n \int_{\RR_+^2\times \RR}\E{\Zcal_{(t,\theta)} Z^{(i)}_{(t,x)} Z_{(t,x)}^{(j)} \partial^2_{ji}v\left(\bdF^{(T)}\right)} dtd\theta \vartheta(dx) \\
    &+ \sum_{i=1}^n  \sum_{j=1}^n \int_{\RR_+^2\times \RR}\E{\Zcal_{(t,\theta)} Z^{(i)}_{(t,x)} \delta \left( Z^{(j)} D_{(t,\theta,x)}\left( \Zcal\right)  \right) \partial^2_{ji}v\left(\bdF^{(T)}\right)} dtd\theta \vartheta(dx).
\end{align*}
Then, an adaptation of Lemma 8.2 of \cite{coutin_normal_2024} yields
\begin{align*}
    A^T_1 =& \sum_{i=1}^n  \sum_{j=1}^n \int_{\RR_+^2\times \RR}\E{\Zcal_{(t,\theta)} Z^{(i)}_{(t,x)} Z_{(t,x)}^{(j)} \partial^2_{ji}v\left(\bdF^{(T)}\right)} dtd\theta \vartheta(dx)= \frac{m_{g,2}}{T}\E{\sum_{i=1}^n \partial^2_{ii}v\left(\bdF^{(T)}\right)  \int_{t_{i-1}T}^{t_i T} \lambda_s ds  }.
\end{align*}
Hence,
\begin{align}
    \left|A_1^T - \E{\left\langle \bsig, \nabla^2 v\left( \bdF^{(T)}\right) \right\rangle_{H.S.}}\right| &=\left| \sum_{i=1}^n \E{\partial_{ii}^2v\left(\bdF^{(T)}\right) \left\{\sigma^2 m_{g,2} \left(t_i-t_{i-1} \right)-\frac{m_{g,2}}{T}\int_{t_{i-1}T}^{t_i T} \lambda_s ds\right\}}\right| \nonumber\\
    &\leq m_{g,2}\max_{i=1,\dots,n} \left\|\partial^2_{ii}v \right\|_{\LL^\infty(\RR^n)}\left| \sum_{i=1}^n \E{\left\{\sigma^2 \left(t_i-t_{i-1} \right)-\frac{1}{T}\int_{t_{i-1}T}^{t_i T} \lambda_s ds\right\}}\right| .
    \label{eq: maj_A1_prod_scal_HS}
\end{align}
\end{proof}

\subsection{Results on the Ornstein-Uhlenbeck semigroup}
In order to apply Proposition \ref{prop: malliavin_stein_calcul} without restriction, we give in the following two lemmas: the first one guarantees the smoothing effect of the semi group and the second one gives explicit upper bound for its partial derivatives. Remark that these results are inspired by the work of \cite{coutin_donskers_2020} where similar result can be found.

\begin{lemma}
    For every $f\in \Ccal_1$, $P_t f \in \Ccal^{\infty}$ and we have:
    \begin{align*}
        \partial_i P_t f(\bx) =& \frac{e^{-t}}{\sigma_n^{2}\beta_t} \E{G_i f\left(e^{-t}\bx+\beta_t \bG \right)},\quad \forall i=1,\dots,n,\\
        \partial^2_{ij} P_t f(\bx) =& \frac{e^{-3t/2}}{ \sigma_n^{4}\beta_{t/2}^2} \E{G_j \widetilde{G}_i f\left( e^{-t} \bx + \beta_{t/2}e^{-t/2}\widetilde{\bG} + \beta_{t/2} \bG\right)},\quad \forall i,j=1,\dots,n,
    \end{align*}
    where $\sigma_n = \frac{\Tilde{\sigma}}{\sqrt{n}}=\frac{\sigma}{\sqrt{n}}\sqrt{m_{g,2}}$, $\beta_t = \sqrt{1-e^{-2t}}$ and $\widetilde{\bG}$ is an independent copy of $\bG$.
\end{lemma}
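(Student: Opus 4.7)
The strategy will be to derive both formulae from two classical ingredients: differentiation under the expectation in $P_t f(\bx)=\E{f(e^{-t}\bx+\beta_t\bG)}$, combined with Gaussian integration by parts (Stein's identity) to transfer each derivative from $f$ onto the Gaussian. The $\Ccal^\infty$-regularity of $P_t f$ will come as a by-product, and the second-derivative formula will be obtained by applying the first-derivative one twice via the semigroup property $P_t=P_{t/2}\circ P_{t/2}$.

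For the first derivative, I would rewrite $P_t f$ as a convolution with the density $q_t$ of $\beta_t\bG$, so that $\bx$ appears only inside a $\Ccal^\infty$ kernel with Gaussian-decaying derivatives: $P_t f(\bx)=\int f(\bz)\,q_t(\bz-e^{-t}\bx)\,d\bz$. For any $f$ of at most polynomial growth (which holds for $f\in\Ccal_1$), the Leibniz rule then justifies differentiation under the integral and yields both the claimed regularity and $\partial_i P_t f(\bx)=\frac{e^{-t}}{\sigma_n^{2}\beta_t^{2}}\int f(\bz)(z_i-e^{-t}x_i)\,q_t(\bz-e^{-t}\bx)\,d\bz$; reversing the substitution $\by=(\bz-e^{-t}\bx)/\beta_t$ immediately produces the first formula. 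Equivalently, this is Stein's identity $\E{G_i h(\bG)}=\sigma_n^{2}\E{\partial_i h(\bG)}$ applied to $h(\bG)=f(e^{-t}\bx+\beta_t\bG)$, together with the $e^{-t}$ chain-rule factor.

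For the mixed second derivative I would invoke $P_t f=P_{t/2}(P_{t/2}f)$, which follows from the Gaussian convolution identity $\beta_{t/2}^{2}(1+e^{-t})=\beta_t^{2}$. Applying the first-derivative formula with time $t/2$ to the outer semigroup gives
\begin{equation*}
\partial_i P_t f(\bx)=\frac{e^{-t/2}}{\sigma_n^{2}\beta_{t/2}}\E{G_i\,(P_{t/2}f)(e^{-t/2}\bx+\beta_{t/2}\bG)},
\end{equation*}
and differentiating in $x_j$ brings $\partial_j(P_{t/2}f)$ inside the expectation with an extra $e^{-t/2}$ from the chain rule. A second application of the first-derivative formula, now to $P_{t/2}f$ with an independent copy $\widetilde{\bG}$, expresses $\partial_j(P_{t/2}f)(\by)=\frac{e^{-t/2}}{\sigma_n^{2}\beta_{t/2}}\E{\widetilde{G}_j\,f(e^{-t/2}\by+\beta_{t/2}\widetilde{\bG})}$ without derivatives of $f$. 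Substituting $\by=e^{-t/2}\bx+\beta_{t/2}\bG$ and multiplying the prefactors yields
\begin{equation*}
\partial_{ij}^{2}P_t f(\bx)=\frac{e^{-3t/2}}{\sigma_n^{4}\beta_{t/2}^{2}}\E{G_i\,\widetilde{G}_j\,f\bigl(e^{-t}\bx+e^{-t/2}\beta_{t/2}\bG+\beta_{t/2}\widetilde{\bG}\bigr)}.
\end{equation*}
Since $\bG$ and $\widetilde{\bG}$ are iid with the same covariance, relabeling $\bG\leftrightarrow\widetilde{\bG}$ (together with equality of mixed partials, valid by the $\Ccal^\infty$-regularity established in the first step) transforms this into the stated form with $G_j\widetilde{G}_i$ and with $e^{-t/2}$ attached to $\widetilde{\bG}$.

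The argument is essentially routine, being two successive applications of Gaussian integration by parts, so the only genuine obstacle is bookkeeping: carefully tracking the origin of each $e^{-t/2}$ and $\beta_{t/2}$ factor so that the prefactor $e^{-3t/2}/(\sigma_n^{4}\beta_{t/2}^{2})$ emerges correctly and the $e^{-t/2}$ ends up attached to the correct Gaussian copy inside $f$. The justification of differentiation under the integral and of the Gaussian semigroup identity are standard consequences of Gaussian moment bounds and do not require any assumption on $f$ beyond membership in $\Ccal_1$.
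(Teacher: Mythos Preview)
Your proposal is correct and follows essentially the same route as the paper: Stein's identity (Gaussian integration by parts) for the first-derivative formula, and the semigroup factorisation $P_t=P_{t/2}\circ P_{t/2}$ iterated with an independent copy $\widetilde{\bG}$ for the second. The only cosmetic differences are that you obtain $\Ccal^\infty$-regularity directly from the convolution representation (the paper instead starts from $f\in\Ccal_2$ and the chain rule before invoking Stein's lemma), and that you apply the first-derivative formula to the outer $P_{t/2}$ first and then differentiate, whereas the paper uses the commutation $\partial_i P_{t/2}g=e^{-t/2}P_{t/2}\partial_i g$ first; this is exactly why you need the final relabeling $\bG\leftrightarrow\widetilde{\bG}$, which you handle correctly.
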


\begin{proof}
    In this proof, we only prove that $P_tf$ is a $\Ccal^2$ function. However, the methodology to prove that $P_tf$ is a $\Ccal^\infty$ function is an iteration of our method.\\
    Let $f\in \Ccal_2$. Then, for any $\bx\in \RR^n$,
    \begin{align*}
        \partial_i P_t f (\bx) = e^{-t} P_t\left( \partial_i f\right)(\bx) =e^{-t} \E{\partial_i f\left( e^{-t}\bx+\beta_t \bG\right)} = \frac{e^{-t}}{\sigma_n^2 \beta_t} \E{G_i f\left(e^{-t}\bx+\beta_t \bG \right)}.
    \end{align*}
    Here, we have used the Stein Lemma for the last equality.\\
    Since $P_t f = P_{t/2} \left( P_{t/2} f \right)$, we have:
    \begin{align*}
    \partial_j \partial_i P_t f(\bx) = e^{-t/2} \partial_j P_{t/2} \left\{ \partial_i P_{t/2}f \right\}(\bx) = \frac{e^{-t}}{\sigma_n^2 \beta_{t/2}} \E{G_j \partial_i P_{t/2}f\left( e^{-t/2} \bx + \beta_{t/2} \bG\right) }.
    \end{align*}
    We now make use of the first result to get:
    \begin{align*}
    \partial_j \partial_i P_t f(\bx) &= \frac{e^{-3t/2}}{\sigma_n^4 \beta_{t/2}^2} \E{G_j \widetilde{G}_i f\left( e^{-t/2} \left( e^{-t/2}\bx + \beta_{t/2}\widetilde{\bG}\right) + \beta_{t/2} \bG\right) }\\
    &= \frac{e^{-3t/2}}{\sigma_n^4 \beta_{t/2}^2} \E{G_j \widetilde{G}_i f\left( e^{-t} \bx + \beta_{t/2}e^{-t/2}\widetilde{\bG} + \beta_{t/2} \bG\right)}.
\end{align*}
By similar computations, we can prove that $P_t f\in \Ccal^{\infty}$.
\end{proof}

\begin{lemma} 
\label{lem: maj_derive_semigroup}
Let $f\in \Ccal^1(\RR^n)$ such that $\max_{i=1,\dots, n} \left\| \partial_i f\right\|_{\LL^\infty(\RR^n)} \leq 1$.
For any $t\in \RR_+$ and $n\in \NN^\ast$,
    \begin{align*}
        \max_{i=1,\dots,n}\left\|\partial_i P_t f \right\|_{\LL^\infty(\RR^n)} \leq e^{-t},\quad & \max_{i,j=1,\dots,n}\left\|\partial_{ij}^2 P_t f \right\|_{\LL^\infty(\RR^n)} \leq \frac{e^{-3t/2}}{\sigma_n\beta_{t/2}}, \quad&\max_{i,j,k=1,\dots,n} \left\| \partial_{ijk}^3 P_t f \right\|_{\LL^\infty(\RR^n)} \leq \frac{e^{-5t/2}}{\sigma_n^2\beta_{t/2}^2}
    \end{align*}
\end{lemma}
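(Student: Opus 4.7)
The plan is to leverage the two explicit representations provided in the preceding lemma, combined with the semigroup property $P_t = P_{t/2} \circ P_{t/2}$. At every order of differentiation one has a choice: either differentiate $P_t f(\bx) = \E{f(e^{-t}\bx + \beta_t \bG)}$ by the usual chain rule inside the expectation, which costs a factor $e^{-t}$ and places the derivative onto $f$, or invoke the Stein integration-by-parts formula built into the preceding lemma, which costs a factor $\frac{e^{-t}}{\sigma_n^2 \beta_t}$ and replaces a derivative by a Gaussian weight $G_i$. The assumption $\max_i \|\partial_i f\|_{\LL^\infty(\RR^n)} \leq 1$, with no control on $f$ itself, dictates how many of each kind of differentiation one can afford at each order.

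For the first bound, I would differentiate directly under the expectation to obtain $\partial_i P_t f = e^{-t} P_t(\partial_i f)$, and conclude with the $\LL^\infty$-contractivity of $P_t$ together with $\|\partial_i f\|_{\LL^\infty(\RR^n)} \leq 1$. For the second bound, a direct use of the formula for $\partial_{ij}^2 P_t f$ from the preceding lemma would require controlling $\|f\|_{\LL^\infty(\RR^n)}$, which is not available. I would instead split $P_t = P_{t/2} \circ P_{t/2}$ and write $\partial_{ij}^2 P_t f(\bx) = e^{-t/2}\, \partial_j P_{t/2}\bigl(\partial_i P_{t/2} f\bigr)(\bx)$; the first bound applied at time $t/2$ gives $\|\partial_i P_{t/2} f\|_{\LL^\infty(\RR^n)} \leq e^{-t/2}$, and the Stein representation of $\partial_j P_{t/2}$ introduces a single Gaussian weight $G_j$. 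Taking absolute values and using $\E{|G_j|} = \sigma_n \sqrt{2/\pi} \leq \sigma_n$ produces the target $\frac{e^{-3t/2}}{\sigma_n \beta_{t/2}}$.

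For the third bound, I would start from the formula for $\partial_{ij}^2 P_t f$ given in the preceding lemma and differentiate once more in $x_k$ by the chain rule inside the expectation, which is legitimate since $f \in \Ccal^1$ with bounded partial derivatives. This yields
\begin{equation*}
\partial_{ijk}^3 P_t f(\bx) = \frac{e^{-5t/2}}{\sigma_n^4 \beta_{t/2}^2}\, \E{G_j \widetilde{G}_i \, \partial_k f\bigl(e^{-t}\bx + \beta_{t/2} e^{-t/2}\widetilde{\bG} + \beta_{t/2}\bG\bigr)}.
\end{equation*}
The conclusion then follows by bounding $\|\partial_k f\|_{\LL^\infty(\RR^n)} \leq 1$ and observing that $\E{|G_j \widetilde{G}_i|} = \E{|G_j|}\E{|\widetilde{G}_i|} \leq \sigma_n^2$ by independence of $\bG$ and $\widetilde{\bG}$.

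The main obstacle is more bookkeeping than analysis: at each order one must identify exactly how many Stein steps (each paying $1/(\sigma_n \beta_{\cdot})$) versus how many chain-rule steps (each paying $e^{-t}$) the $\Ccal^1$-regularity of $f$ permits. Using Stein twice would demand $\|f\|_{\LL^\infty(\RR^n)}<\infty$, while using the chain rule twice would demand $f \in \Ccal^2$; the semigroup identity $P_t = P_{t/2} \circ P_{t/2}$ is precisely what allows one to redistribute the single available derivative of $f$ across the successive applications of the formulas from the preceding lemma.
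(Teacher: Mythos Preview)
Your proposal is correct and follows essentially the same route as the paper's proof: the paper also uses $\partial_i P_t f = e^{-t} P_t(\partial_i f)$ for the first bound, the intermediate representation $\partial_{ij}^2 P_t f(\bx) = \frac{e^{-t}}{\sigma_n^2 \beta_{t/2}} \E{G_j\, \partial_i P_{t/2}f(e^{-t/2}\bx + \beta_{t/2}\bG)}$ together with $\E{|G_j|}\leq \sigma_n$ for the second, and then differentiates the two-weight formula once more by the chain rule and uses $\E{|G_j \widetilde{G}_i|}\leq \sigma_n^2$ for the third. Your discussion of why one must mix Stein steps with chain-rule steps (to avoid needing $\|f\|_{\LL^\infty}$ or $f\in\Ccal^2$) is a helpful gloss that the paper leaves implicit.
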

\begin{proof}
    Fix $i,j,k=1,\dots,n$ and $\bx\in \RR^n$. We have:
    \begin{align*}
        \left|\partial_i P_t f(\bx)\right| \leq e^{-t} \E{\left| \partial_i f \left( e^{-t}\bx + \beta_t \bG \right)\right|} \leq e^{-t} \left\| \partial_i f \right\|_{\LL^\infty(\RR^n)} \leq e^{-t}
    \end{align*}
    On the other hand, we have
    \begin{align*}
   \left|\partial_j \partial_i P_t f (\bx)\right| &= \frac{e^{-t}}{\sigma_n^2 \beta_{t/2}} \left|\E{G_j \partial_i P_{t/2}f \left( e^{-t/2} \bx + \beta_{t/2} \bG\right) } \right| \leq \frac{e^{-t}}{\sigma_n^2 \beta_{t/2}} \left\|\partial_i P_{t/2} f \right\|_{\LL^\infty(\RR^n)}\E{\left|G_j\right|}.
    \end{align*}
    Since $\E{\left|G_j\right|}\leq \sigma_n$, we obtain
    \begin{align*}
        \left|\partial_j \partial_i P_t f(\bx)\right| &\leq \frac{e^{-3t/2}}{\sigma_n\beta_{t/2}}.
    \end{align*}
    Similarly, we have
    \begin{align*}
        \partial_{k}\partial_j \partial_i P_t f(\bx) = \frac{e^{-5t/2}}{\sigma_n^4 \beta_{t/2}^2}\E{G_j \widetilde{G}_i \partial_k f \left( e^{-t} \bx + \beta_{t/2}e^{-t/2}\widetilde{\bG} + \beta_{t/2} \bG\right)},
    \end{align*}
    and so,
    \begin{align*}
        \left|\partial_{kji}^3 P_t f(\bx)\right| \leq \frac{e^{-5t/2}}{\sigma_n^4 \beta_{t/2}^2} \left\| \partial_k f \right\|_{\LL^\infty(\RR^n)} \E{\left|G_j \widetilde{G}_i \right|} \leq \frac{e^{-5t/2}}{\sigma_n^2\beta_{t/2}^2}.
    \end{align*}
\end{proof}

\paragraph{Acknowledgment}: This work received support from the University Research School EUR-MINT
(State support managed by the National Research Agency for Future Investments
 program bearing the reference ANR-18-EURE-0023).

\begin{sloppypar}
\printbibliography
\end{sloppypar}

\end{document}